\documentclass[11pt]{amsart}

\addtolength{\textwidth}{2cm} \addtolength{\hoffset}{-1cm}
\addtolength{\textheight}{1cm} \addtolength{\voffset}{-0.5cm}
\usepackage{amscd}
\usepackage{xypic}  %commu
\usepackage{amssymb}
\usepackage{amsthm}
\usepackage{epsfig}
\usepackage{paralist}
\usepackage{url}
\usepackage[hidelinks=true,hypertexnames=false]{hyperref}
\usepackage{tikz}

%The given symbol or text (\text{mytext}) in a circle
%To be used always in math mode

%\usepackage{showkeys}

%\usepackage{comment}%

%\usepackage[T1]{fontenc}
\usepackage[all,cmtip]{xy}
\usepackage{amsmath}
\usepackage{color}

\newtheorem{thm}{Theorem}[section]  

\newtheorem{lemma}[thm]{Lemma}
\newtheorem{prop}[thm]{Proposition}
\newtheorem{proposition}[thm]{Proposition}
\newtheorem{claim}[thm]{Claim}

\theoremstyle{definition}

\newtheorem{remark}[thm]{Remark}

\newtheorem{notation}[thm]{Notation }
 \newtheorem{defn}[thm]{Definition}

 \newtheorem{example}[thm]{Example}

\newtheorem*{example1*}{Example: The case $\phi_1=1$}

\newtheorem*{example2*}{Example: The case $\phi_1=2$}

\newtheorem*{example3*}{Example: The case $\phi_1=3$}

\def\min{\operatorname{min}}

\def\max{\operatorname{max}}

\def\c1{\operatorname{c_1}}
\def\c2{\operatorname{c_2}}

\def\Sym{\operatorname{Sym}}

\def\CC{{\mathbb C}}

\def\ZZ{{\mathbb Z}}

\def\PP{{\mathbb P}}

\def\D{{\mathcal D}}
\def\DD{{\mathbb D}}

\def\L{{\mathcal L}}

\def\N{{\mathcal N}}
\def\O{{\mathcal O}}
\def\I{{\mathcal J}}

\def\E{{\mathcal E}}

\def\H{{\mathcal H}}

\def\V{{\mathcal V}}
\def\W{{\mathcal W}}
\def\C{{\mathcal C}}

\def\e{\mathfrak{e}}
\def\s{\mathfrak{s}}
\def\f{\mathfrak{f}}
\def\c{\mathfrak{c}}
\def\d{\mathfrak{d}}

\def\x{\times}                   % product (fiber)
                  % dual
\def\cong{\simeq}

\def\+{\oplus}               % direct sum
\def\*{\otimes}                  % tensor product
%\def\incl{\hookrightarrow}       % inclusion
       % ----->

\def\Bl{\operatorname{Bl}}

\def\Shext{\operatorname{ \mathfrak{e}\mathfrak{x}\mathfrak{t} }}

\def\Pic{\operatorname{Pic}}

\def\Num{\operatorname{Num}}

\def\Bl{\operatorname{Bl}}

\hfuzz5pc
\begin{document}

\title{Nonemptiness of Severi varieties on Enriques surfaces} 

\author[C.~Ciliberto]{Ciro Ciliberto}
\address{Ciro Ciliberto, Dipartimento di Matematica, Universit{\`a} di Roma Tor Vergata, Via \
della Ricerca Scientifica, 00173 Roma, Italy}
\email{cilibert@mat.uniroma2.it}

\author[T.~Dedieu]{Thomas Dedieu}
\address{Thomas Dedieu,                                                                     
Institut de Math{\'e}matiques de Toulouse--UMR5219,                                          
Universit{\'e} de \linebreak Toulouse--CNRS,                                                            
UPS IMT, F-31062 Toulouse Cedex 9, France}
\email{thomas.dedieu@math.univ-toulouse.fr}

\author[C.~Galati]{Concettina Galati}
\address{Concettina Galati, Dipartimento di Matematica e Informatica, Universit{\`a} della \linebreak Calabria, via P. Bucci, cubo 31B, 87036 Arcavacata di Rende (CS), Italy}
\email{concettina.galati@unical.it}

\author[A.~L.~Knutsen]{Andreas Leopold Knutsen}
\address{Andreas Leopold Knutsen, Department of Mathematics, University of Bergen, Postboks 7800,
5020 Bergen, Norway}
\email{andreas.knutsen@math.uib.no}

\date{\today}
%\keywords{}

%\subjclass{}

\begin{abstract}  Let $(S,L)$ be a general polarized Enriques surface,
  with $L$ not numerically 2--divisible. We prove the existence of
  regular components of all Severi varieties of irreducible nodal
  curves in the linear system $|L|$, i.e., for any number of nodes
  $\delta=0, \ldots, p_a(L)-1$.  This solves a classical open problem and gives a positive answer to a recent conjecture of Pandharipande--Schmitt, under the additional condition of non--2--divisibility.
 \end{abstract}

\maketitle

\section{Introduction}

Let $S$ be a smooth, projective complex surface and $L$ a line bundle on $S$. Let $p_a(L)=\frac{1}{2}L \cdot (L+K_S)+1$ denote the arithmetic (or sectional) genus of $L$. For any integer $\delta$ satisfying $0 \leq \delta \leq p_a(L)$ we denote by
$V_{|L|,\delta}(S)$ the {\it Severi variety} parametrizing irreducible $\delta$-nodal curves in $|L|$. A heuristic count shows that the \emph{expected dimension} of $V_{|L|,\delta}(S)$ is $\dim (|L|)-\delta$. 

Severi varieties were introduced by Severi in \cite[Anhang F]{sev}, where he proved  that all Severi varieties
of irreducible $\delta$-nodal curves of degree $d$ in $\PP^2$ are nonempty and smooth of the expected dimension. Severi also claimed irreducibility of such varieties, but his proof contains a gap. The irreducibility was proved by Harris in \cite{Ha}.

Severi varieties on other surfaces have received much attention in recent years, especially in connection with enumerative formulas computing their degrees (see \cite {Be, BOPY, CH, De, Go, KMPS, LS, Tz, YZ}). 
Nonemptiness, smoothness, dimension and irreducibility for  Severi varieties have been widely investigated on various rational surfaces (see, e.g., \cite{GLS, Ta, Te,  Te2, Ty}), as well as K3 and abelian surfaces (see, e.g., \cite{chen, KL, KLM, LS, MM, Ta2, Za}). 
 Extremely little is known on other surfaces. In particular,
Severi varieties may have unexpected behaviour: examples are given in \cite{CC} of surfaces of general type with reducible Severi varieties, and also with components of dimension different from the expected one.

In this paper we consider the case of Enriques surfaces. If $S$ is an Enriques surface, it is known (cf.\ \cite[Prop. 1]{indam}) that
$V_{|L|,\delta}(S)$, {\it if nonempty}, is smooth and every irreducible component has dimension either $p_a(L)-\delta-1$ or $p_a(L)-\delta$. Moreover, if $S$ is general in moduli, the latter case can only occur if $L$ is $2$-divisible in $\Pic (S)$. Any component of dimension
$p_a(L)-\delta-1$ is called {\it regular}, and these components can only be nonempty for
$\delta \leq p_a(L)-1$, that is, they parametrize nodal curves of genus at least one.
The nonemptiness problem has  remained  open until now.

For any integer $g \geq 2$, let 
$\E_{g}$ denote the moduli
space of complex polarized 
Enriques surfaces $(S,L)$ 
of {\it (sectional) genus} $g$, that is, $S$ is an Enriques surface and $L$ is an ample line bundle on $S$ such that
$L^2=2g-2$. Thus, $g$ is the arithmetic genus of all curves in the linear system
$|L|$. The spaces $\E_g$ have many irreducible components. A way to determine these has recently been given in \cite{kn-JMPA}, after partial results were obtained in \cite{cdgk}, cf.\ Theorem \ref{thm:fundcoef} below.

Denote by $\E_g[2]$ the locus in $\E_{g}$ parametrizing pairs $(S,L)$ such that
$L$ is $2$-divisible in $\Num(S)$.
The main result of this paper settles the existence of regular components of all Severi varieties
on general polarized 
Enriques surfaces outside $\E_g[2]$:

\begin{thm} \label{thm:main}
  Let $(S,L)$ be a general element of any irreducible component of $\E_{g} \setminus \E_g[2]$. 
Then $V_{|L|,\delta}(S)$ is nonempty and has a regular component, of dimension $g-1-\delta$, for all $0 \leq \delta<g$.
\end{thm}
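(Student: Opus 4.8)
The plan is to reduce the statement to a single nonemptiness assertion and then to a degeneration argument. First I would observe that regularity is automatic here: since $(S,L)$ is general in a component of $\E_g\setminus\E_g[2]$, the class $L$ is not $2$--divisible in $\Num(S)$, hence (as $\Pic(S)_{\mathrm{tors}}\cong\ZZ/2\ZZ$, generated by $K_S$) not $2$--divisible in $\Pic(S)$ either. By \cite[Prop.~1]{indam} quoted above, every nonempty $V_{|L|,\delta}(S)$ is then smooth of pure dimension $p_a(L)-\delta-1=g-1-\delta$, i.e.\ all of its components are regular. So the theorem is equivalent to the bare nonemptiness of $V_{|L|,\delta}(S)$ for all $0\le\delta\le g-1$.

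Next I would reduce to the deepest case $\delta=g-1$. If $C\in|L|$ is an irreducible curve with exactly $\delta$ nodes lying in a regular component, then near $[C]$ the Severi variety is locally the transverse intersection of the $\delta$ smooth ``nodal'' divisors in $|L|$ attached to the individual nodes; dropping one of them produces curves with exactly $\delta-1$ nodes, still irreducible, since smoothing a node of an irreducible curve yields an irreducible curve. Hence nonemptiness propagates downward, and it suffices to produce one irreducible curve $C\in|L|$ of geometric genus $1$ with $g-1$ nodes, i.e.\ a single point of $V_{|L|,g-1}(S)$. This is the natural terminal case: since $K_S\equiv 0$, the virtual dimension of genus--$1$ stable maps in class $L$ equals $-K_S\cdot L=0$, matching $\dim V_{|L|,g-1}(S)=0$, which is also the point of contact with the enumerative count of Pandharipande--Schmitt.

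To construct this curve I would work on an explicit model of the given component furnished by the classification Theorem~\ref{thm:fundcoef}, which presents $L$ numerically through an isotropic decomposition into half--fibers $E_i$. Using the elliptic pencils $|2E_i|$ (whose general members are smooth elliptic curves) together with the available $(-2)$--curves, I would assemble a connected nodal curve $C_0\in|L|$ of arithmetic genus $g$ whose dual graph is tightly controlled: it should carry exactly $g-1$ nodes which, once kept, realize geometric genus $1$, the remaining nodes being available for smoothing. The point where non--$2$--divisibility must enter is precisely the existence of such a connected, ``odd'' configuration; over $\E_g[2]$ the analogous curve acquires extra sections, which is exactly what lets the dimension jump to $g-\delta$ there.

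Finally I would smooth $C_0$ while preserving the chosen $g-1$ nodes. The key input is the vanishing $H^1(S,\mathcal I_{Z}(L))=0$, where $Z$ is the reduced scheme of the nodes to be kept: this shows both that these nodes impose independent conditions—so the partial smoothing exists and lands in a regular component—and that the general such deformation is irreducible with exactly $g-1$ nodes and geometric genus $1$. I expect the main obstacle to be exactly this construction of $C_0$ together with the verification of the vanishing \emph{uniformly} across all components of $\E_g\setminus\E_g[2]$: one must control the combinatorics of the degenerate configuration case by case (the analysis of Cossec's function $\phi$ in the worked examples suggests a split according to its value), guarantee that the surviving $g-1$ nodes genuinely force an irreducible genus--$1$ smoothing rather than a reducible or more singular limit, and isolate cleanly the single step that breaks down on $\E_g[2]$.
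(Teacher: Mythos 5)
Your two opening reductions are correct and coincide with the paper's: for a general $(S,L)$ in a component of $\E_g\setminus\E_g[2]$ every nonempty $V_{|L|,\delta}(S)$ is automatically smooth of dimension $g-1-\delta$ by \cite[Prop.~1]{indam}, and nonemptiness propagates downward from the extremal case, so everything rests on producing a single irreducible elliptic curve with $g-1$ nodes in $|L|$ (this is \cite[Cor.~1]{indam}, which the paper invokes). The proposal breaks down, however, at the final and essential step.

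The plan to build a reducible nodal configuration $C_0\in|L|$ on $S$ itself (out of members of the pencils $|2E_i|$ and $(-2)$-curves) and then to ``smooth $C_0$ while preserving the chosen $g-1$ nodes'' is impossible for elementary dimension reasons. Since $L$ is ample with $L^2=2g-2$ on an Enriques surface, $\dim|L|=g-1$ exactly. The tangent space at $[C_0]$ to the locus of deformations preserving the nodes in $Z$ is $H^0(S,\mathcal I_Z(L))/\CC\cdot C_0$; so if the vanishing $H^1(S,\mathcal I_Z(L))=0$ you require actually holds, then curves in $|L|$ through the $g-1$ points of $Z$ form a \emph{zero-dimensional} linear system, i.e.\ $C_0$ is the unique member of $|L|$ containing $Z$. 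Hence $C_0$ admits no deformation whatsoever preserving $Z$, and its remaining nodes can never be smoothed while $Z$ is kept: the partial smoothing you need does not exist. If instead the conditions imposed by $Z$ were dependent, any irreducible nodal curve produced this way would lie in a component of $V_{|L|,g-1}(S)$ of positive dimension, which is precisely what \cite[Prop.~1]{indam} rules out on a general non-$2$-divisible $(S,L)$. In other words, the points of $V_{|L|,g-1}(S)$ are rigid and isolated, and therefore cannot be reached by any degeneration or smoothing argument carried out inside the fixed surface $S$; this obstruction is intrinsic, not a technical difficulty to be overcome by a cleverer choice of $C_0$ or a case analysis over the components of $\E_g$. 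It is exactly why the paper degenerates the \emph{surface} rather than the curve: on the semi-stable limit $X=\widetilde R\cup_T\widetilde P$ one constructs a limit curve with an $m$-tacnode on $T$, and the deformation theory of \cite{gk} applied in the total space $\mathfrak X\to\DD$ of the family of surfaces supplies the missing parameter --- the smoothing direction of $X$ itself --- along which the tacnode breaks into $m-1$ nodes and the limit curve deforms to the desired rigid elliptic curve on the nearby Enriques surface $S_t$.
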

 
By \cite[Cor. 1]{indam}, the theorem follows as soon as one proves the case of maximal $\delta$, that is, $\delta=g-1$, in which case the parametrized curves are elliptic. 

We note that Theorem \ref {thm:main} implies  a conjecture due to Pandharipande and Schmitt regarding smooth curves of genus $g\geq 2$ on Enriques surfaces (see \cite[Conj. 5.1] {PS}). Our result implies this conjecture 
for curves whose classes are not 2--divisible (see \cite [Prop. 2.2 and text after Conj. 5.1]{PS}).

We shall prove Theorem \ref {thm:main} by degenerating a general
Enriques surface to the union of two surfaces $R$ and $P$, birational
to the symmetric square of a general elliptic curve and the projective
plane respectively, and glued along a smooth elliptic curve $T$
numerically anticanonical on each surface.
We need the assumption that $L$ is not $2$-divisible to ensure that
the degenerations of the curves we are interested in
do not contain the curve $T = R \cap S$
(see Lemma~\ref{lemma:R3}), which is well known to be a
major issue in the general context of degenerations.

We introduce the degenerations we need in \S \ref {sec:flatlim}. On
such a semi--stable limit we identify suitable curves that deform to
elliptic nodal curves on the general Enriques surface and with the
prescribed linear equivalence class that are rigid,
i.e., they do not move in a positive dimensional family.  As remarked
above, this suffices to prove the theorem.  The aforementioned
suitable curves consist, apart from some $(-1)$-curves as components,
of an irreducible nodal elliptic curve $C_R$ on $R$ and an irreducible
nodal rational curve $C_P$ on $P$ intersecting at one single point on
$T$, where both $C_R$ and $C_P$ are smooth and have a contact of high
order.
Such curves are
members of so-called logarithmic Severi varieties on the surfaces on
which they lie. We develop all necessary tools and results on such
varieties on the two types of surfaces in question in \S \ref
{sec:GSV}.

The analysis of the conditions under which the limit
curves actually deform to rigid nodal elliptic curves on the general
Enriques surface is performed in the crucial \S \ref
{sec:prepar}. This includes the most delicate part of our proof
(Propositions \ref{prop:metodo1} and \ref{prop:metodo2}), which
consists in showing that the above mentioned curves $C_R$ and $C_P$
can be put together nicely.
We end up eventually with numerical conditions to be
verified by the line bundles determined on each component of the limit
surfaces.

An important ingredient next is the description of all
components of moduli spaces of polarized Enriques surfaces in terms of
decompositions of the polarizing line bundles into effective isotropic
divisors as developed recently in \cite {cdgk, kn-JMPA}, which we
review in \S \ref {sec:iso}. The corresponding identification of
suitable isotropic Cartier divisors on the limit surfaces is done in
\S \ref {sec:speciso}. Finally, \S \ref{sec:proofmain} is devoted to
exhibiting, for each component of the moduli spaces of polarized
Enriques surfaces, a suitable isotropic decomposition of the limit
polarising line bundle, such that its restriction on each component
verifies the conditions  necessary to deform  the curves mentioned above.

\vspace{0.3cm} 
\noindent
{\it Acknowledgements.} The authors would like to thank Johannes Schmitt for useful correspondence and two referees for useful remarks. The authors acknowledge funding
from MIUR Excellence Department Project CUP E83C180 00100006 (CC),
project FOSICAV within the  EU  Horizon
2020 research and innovation programme under the Marie
Sk{\l}odowska-Curie grant agreement n.  652782 (CC, ThD),
 GNSAGA of INDAM (CC, CG), the Trond Mohn 
Foundation Project ``Pure Mathematics in Norway'' (ALK, CG) and grant 261756 of the Research Council of Norway (ALK).

\section{Flat limits of Enriques surfaces} \label{sec:flatlim}

In this section we will introduce the semi--stable degenerations of general Enriques surfaces that we will use in our proof of Theorem \ref {thm:main}. 

Let $E$ be a smooth elliptic curve. Denote by $\+$ (and $\ominus$) the group operation on $E$  and by $e_0$ the neutral element. Let $R:=\Sym^2(E)$ and
$\pi: R \to E$ be the (Albanese) projection map sending $x+y$ to $x\+ y$.
We denote the fiber of $\pi$ over a point $e \in E$ by 
\[ \f_e:=\pi^{-1}(e)=\{ x+y \in \Sym^2(E) \; | \; x\+ y=e \; \mbox{(equivalently,} \; 
x + y \sim e+e_0)\},\]
which is the $\PP^1$ defined by the linear system $|e+e_0|$. (Here, and throughout the paper, $\sim$ denotes linear equivalence of divisors.) We denote the algebraic equivalence class of the fibers by $\f$. 
Symmetric products of elliptic curves have been studied in detail in
\cite{CaCi}, to which we will frequently refer in this paragraph.

For each $e \in E$, we define the curve $\s_e$ (called $D_e$ in \cite{CaCi})
as the image of the section $E \to R$ mapping $x$ to $e+ (x \ominus e)$.
We let $\s$ denote the algebraic equivalence class of these sections, which are the ones with minimal self-intersection, namely $1$, cf.\ \cite{CaCi}. We note that $\Sym^2(E)$ is the $\mathbb P^1$--bundle on $E$ with invariant $-1$.  We observe for later use that
 for $x \neq y$ we have
\begin{equation} \label{eq:duesez} 
\s_x \cap \s_y=\{ x+y\}.
\end{equation}
We also note that 
\begin{equation*} \label{eq:can} 
K_R \sim -2\s_{e_0}+\f_{e_0}.
\end{equation*}

Let $\eta$ be any of the three nonzero $2$-torsion points of $E$. The map $E \to R$ defined by  $e\mapsto e + (e \+ \eta)$ realizes $E$ as an unramified double cover of its image curve
$T:= \{ e+ (e\+\eta) \; | \; e \in E\}$,
which is a smooth elliptic curve. We have
\begin{equation*} \label{eq:T} 
T \sim -K_R+\f_{\eta}-\f_{e_0} \sim 2\s_{e_0}-2\f_{e_0}+\f_{\eta},
\end{equation*}
by \cite[(2.10)]{CaCi}. In particular,
$T \not \sim -K_R \; \; \mbox{and} \; \; 2T \sim -2K_R$.

Embed $T$ as a cubic in $P:=\PP^2$. Consider nine (possibly coinciding) points $y_1,\ldots,y_9 \in T$.
Divide the nine points in two subsets, say of $i$ and $9-i$ points, with $0 \leq i \leq 9$. Let $\widetilde{R} \to R$ and $\widetilde{P} \to P$, respectively, denote the blow--ups at the schemes on $T$ determined by these two subsets of $i$ and $9-i$ points, respectively. Denote by $\e_R$ and $\e_P$ the total exceptional divisors on $\widetilde{R}$ and $\widetilde{P}$, respectively, and 
denote the strict transforms of $T,\s,\f$ with the same symbols. We have 

\begin{eqnarray} \label{eq:T3} 
  T  \sim   2\s_{e_0}-2\f_{e_0}+\f_{\eta}-\e_R \not \sim -K_{\widetilde{R}}\sim 2\s_{e_0}-\f_{e_0}-\e_R&  \mbox{on} \; \; \widetilde{R},\\
 \label{eq:T33} 2T   \sim -2K_{\widetilde{R}} & \mbox{on} \; \; \widetilde{R}, \\
 \label{eq:TP} T  \sim   3\ell-\e_P  \sim -K_{\widetilde{P}}  & \mbox{on} \; \; \widetilde{P}
\end{eqnarray}
where $\ell$ is the pull--back on  $\widetilde{P}$ of a general line in $P$. 
Define
$X=\widetilde{R} \cup_T \widetilde{P}$ as the surface obtained by  gluing
$\widetilde{R}$ and $\widetilde{P}$ along $T$. 
Denote by $\D_{[i]}$ the family of such surfaces.
It is easy to see that $\D_{[i]}$ is irreducible of dimension $10$ (when one also allows $E$ to vary in moduli). We define
$\D:=\cup_{i=0}^9 \D_{[i]}$.

Let $X$ be a member of $\D$.
The {\it first cotangent sheaf} $T^1_{X}:=\Shext^1_{\O_X}(\Omega_{X},\O_{X})$ of $X$ (cf.\ \cite[Cor. 1.1.11]{ser} or \cite[\S 2]{fri}) 
satisfies
\begin{equation*} \label{eq:ss}
 T^1_{X} \cong \N_{T/\widetilde{R}} \* \N_{T/\widetilde{P}} 
\end{equation*}
by \cite[Prop. 2.3]{fri}, which is trivial if and only if the nine points satisfy the condition
\begin{equation}
  \label{eq:cond}
  y_1+\cdots+y_9 \in |\N_{T/R} \* \N_{T/P}|.
\end{equation}
  Thus, $X$ is {\it semi-stable} if and only if \eqref{eq:cond} holds, cf.\ \cite[Def. (1.13)]{fri} and \cite[(0.4)]{fri2}. We will denote by $\D^*_{[i]}$ the subfamily of $\D_{[i]}$ consisting of semi--stable surfaces.
It is easy to see that $\D^*_{[i]}$ is irreducible of dimension $9$.
We define
$\D^*:=\cup_{i=0}^9 \D^*_{[i]}$.

We recall that a Cartier divisor, or a line bundle, $\L$ in $\Pic (X)$, is a pair
$(L',L'')$ such that $[L'] \in \Pic (\widetilde{R})$, $[L''] \in \Pic( \widetilde{P})$ and $L'|_T \cong
L''|_T$. Since $T$ is numerically equivalent to the anticanonical divisor on both $\widetilde{R}$ and $\widetilde{P}$, we have
\begin{equation*} \label{eq:even}
 \L^2=(L')^2+(L'')^2=2p_a(L')-2+2p_a(L'')-2+2d, \; \; d:=L'\cdot T=L''\cdot T.
\end{equation*}

The canonical divisor $K_{X}$ is represented by 
\begin{equation*}
  \label{eq:canrist}
  K_X=(K_{\widetilde{R}}+T,0)=(\f_{\eta}-\f_{e_0},0) \; \; \mbox{in} \; \; \Pic (\widetilde{R}) \x \Pic (\widetilde{P}). 
\end{equation*}
 In particular, by \eqref{eq:T3}-\eqref{eq:TP} we have
\begin{equation}
  \label{eq:doppioz}
  K_X \not\sim 0 \; \; \mbox{and} \; \; 2K_X\sim 0.
\end{equation}
By \cite[Lemma 3.5]{kn-JMPA} the Cartier divisor $K_X$ is the only nonzero torsion element of $\Pic (X)$. (The proof is for $i=2$, but carries over to the general case.)

\begin{remark}\label{rem:ingcro10}  There are exactly two elements of $\Pic^0(\widetilde R)\cong E$ that restrict trivially on $T$, namely $\mathcal O_{\widetilde R}$ and $\mathcal O_{\widetilde R}(\f_\eta-\f_{e_0})$ (see \cite [Lemma 3.3]{kn-JMPA}). Accordingly, for any $[L'] \in \Pic (\widetilde{R})$ and $[L''] \in \Pic(\widetilde{P})$ such that $L'\cdot T=L''\cdot T$, there are two  line bundles $\overline L'$ on $\widetilde{R}$ numerically equivalent to $L'$ such that 
$(\overline L', L'')$ is a line bundle on $X$. By \eqref {eq:doppioz}, their difference is $K_X$.  These line bundles are numerically equivalent and we will denote by $[L',L'']$ their numerical equivalence class. 
\end{remark}

By \eqref {eq:cond}, if $X$ is semi--stable it also carries the Cartier divisor $\xi$ represented by the pair
\begin{equation} \label{eq:xi}
  \xi= (T,-T)\sim (2\s_{e_0}-2\f_{e_0}+\f_{\eta}-\e_R,-3\ell+\e_P)
\end{equation}
in $\Pic (\widetilde{R}) \x \Pic (\widetilde{P})$ (see \cite[(3.3)]{fri2}).

The central result for our purposes is:

\begin{thm} \label{thm:deform}
  Let $y_1,\ldots,y_9 \in T$ be general such that  $X=\widetilde{R} \cup_T \widetilde{P}$ is a member of $\D^*$. 

    There is a flat family $\pi:\mathfrak{X} \to \DD$ over the unit disc such that $\mathfrak{X}$ is smooth and, setting $S_t:=\pi^{-1}(t)$, we have that
    \begin{itemize}
    \item $S_0=X$, and
     \item $S_t$ is a smooth general Enriques surface for $t\neq 0$. 
    \end{itemize}

Furthermore, denoting by $\iota_t: S_t \subset \mathfrak{X}$  the inclusion, there is a short exact sequence
\[
  \xymatrix{
      0 \ar[r] & \ZZ\cdot\xi \ar[r] & \Pic (X) \cong H^2(\mathfrak{X},\ZZ) \ar[r]^{\iota_t^*} & H^2(S_t,\ZZ) \cong \Pic (S_t)  \ar[r] & 0.}
  \]
\end{thm}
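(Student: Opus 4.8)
The plan is to construct the smoothing family $\mathfrak X \to \DD$ and then analyze the Picard groups via the Clemens--Schmid exact sequence or, more concretely, via the well-understood theory of smoothings of $d$-semistable surfaces with trivial (numerical) canonical bundle of Kodaira dimension zero. Since $X$ is a $d$-semistable (i.e., the condition \eqref{eq:cond} holds) normal crossing surface with $2K_X \sim 0$ and $K_X \not\sim 0$, the logarithmic deformation theory of Friedman applies: by the results of Friedman \cite{fri, fri2} (and the fact that $H^2(X, T^0_X)$-type obstructions vanish in the Enriques setting), $X$ admits a smoothing to a surface $S_t$ with $2K_{S_t}\sim 0$, $K_{S_t}\not\sim 0$, and $p_g = q = 0$, that is, a genuine Enriques surface. \emph{First} I would invoke this existence result to produce the flat family $\pi\colon \mathfrak X \to \DD$ with $\mathfrak X$ smooth, $S_0 = X$, and $S_t$ smooth Enriques for $t \ne 0$; generality of $S_t$ in moduli follows from the generality of the $y_i$ and of $E$, using the dimension count ($\dim \D^*_{[i]} = 9$ matching the moduli of Enriques surfaces), which I would state as a consequence of the deformation space being versal in the appropriate sense.

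\emph{Second}, to establish the short exact sequence, the key geometric input is that $\mathfrak X$ retracts onto $X = S_0$, so $H^2(\mathfrak X, \ZZ) \cong H^2(X, \ZZ) \cong \Pic(X)$ (the last isomorphism holding because $X$ has no transcendental $H^2$: both $\widetilde R$ and $\widetilde P$ are rational-or-symmetric-square surfaces with $H^{2,0}=0$, and the Mayer--Vietoris sequence glues these along $T$). I would then identify $\iota_t^*$ as the restriction (specialization) map on cohomology and compute its kernel and cokernel. Surjectivity of $\iota_t^*$ onto $H^2(S_t,\ZZ) \cong \Pic(S_t)$ follows from the local invariant cycle theorem (for the smooth total space over the disc), together with the fact that for Enriques surfaces $H^2(S_t,\ZZ)$ carries a trivial monodromy action after finite base change (the family is already a smoothing with unipotent monodromy), so that every class survives as an invariant cycle.

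\emph{Third}, and this is where the arithmetic rank-counting is pinned down, I would identify the kernel of $\iota_t^*$ as exactly $\ZZ[\xi]$. The class $\xi = (T, -T)$ of \eqref{eq:xi} is the ``vanishing cycle'' class: it restricts to zero on $S_t$ because, in the degeneration, $T$ becomes the double curve and the pair $(T,-T)$ is supported on the vanishing locus, so $\iota_t^* \xi = 0$. To show the kernel is no larger than $\ZZ \xi$ I would compare ranks: $\Pic(X)$ has rank equal to $\rk \Pic(\widetilde R) + \rk \Pic(\widetilde P) - 1$ (the $-1$ from the gluing condition on $T$, matched via Mayer--Vietoris), namely $(3+i) + (1 + (9-i)) - 1 = 13$, whereas $\Pic(S_t) \cong H^2(S_t,\ZZ)$ for an Enriques surface has rank $10$ (free part $\ZZ^{10}$, the Enriques lattice $U \oplus E_8$, plus the $2$-torsion $\ZZ/2\ZZ$ generated by $K_{S_t}$). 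A direct rank count would give kernel rank $13 - 10 = 3$, which overshoots; the resolution, which I would handle carefully, is that $\Pic(X)$ in the statement means the subgroup of line bundles actually defined on $X$ (those pairs agreeing numerically on $T$ up to the torsion $K_X$, as in Remark \ref{rem:ingcro10}), not the full direct sum $\Pic(\widetilde R)\times\Pic(\widetilde P)$. Intersecting with the gluing and numerical conditions cuts the relevant rank down so that the quotient by $\ZZ\xi$ has rank $10$ and matches the Enriques lattice, with the torsion $K_X \mapsto K_{S_t}$ surviving. \emph{The hardest step} will be this precise bookkeeping of which pairs $(L',L'')$ glue to genuine Cartier divisors on $X$ and the verification that, modulo the single relation $\xi$, this group maps isomorphically onto $\Pic(S_t)$ carrying torsion to torsion; the main technical tool here is the compatibility of the specialization map with the intersection form and Friedman's description \cite{fri2} of the Picard group of a smoothing as $\Pic(X)/\ZZ\xi$.
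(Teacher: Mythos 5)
Your proposal takes a genuinely different route from the paper: the paper's proof of Theorem~\ref{thm:deform} is essentially a reduction to prior work, citing \cite[Prop.~3.7, Thm.~3.10 and Cor.~3.11]{kn-JMPA} for the case $X \in \D^*_{[2]}$ and then deducing the remaining cases $\D^*_{[i]}$ by a birational modification of the total space $\mathfrak{X}$ flipping exceptional curves between $\widetilde{P}$ and $\widetilde{R}$ (a ``1--throw'' as in \cite[\S 4.1]{cm}); you instead attempt a self-contained deformation-theoretic and Hodge-theoretic proof. Such a proof could in principle be given, but your version has gaps at its two load-bearing points. The most serious one is the surjectivity step. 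You justify the triviality of the monodromy on $H^2(S_t,\ZZ)$ by saying the family ``is already a smoothing with unipotent monodromy'' so that a finite base change trivializes it; this is false as a principle, since a nontrivial unipotent operator has infinite order and survives every finite base change. The issue is not cosmetic: $X$ is glued along an \emph{elliptic} curve, so this is a Type~II degeneration, and its K3 double cover (a chain $\widetilde{P}_1 \cup \widetilde{R}' \cup \widetilde{P}_2$) has $N = \log T_u \neq 0$ on $H^2$ of the nearby K3 fiber. The statement you actually need --- that $N$ vanishes on the Enriques part --- is true for this degeneration but requires an argument; for instance, the Enriques involution swaps the two double curves of the K3 chain and one checks it acts by $-1$ on $\mathrm{Gr}^W_3$ of the limit mixed Hodge structure, so the invariant part of $\mathrm{Gr}^W_3$, and hence the rank of $N$ on $H^2(S_t,\QQ)$, is zero. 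Without some such argument, the local invariant cycle theorem only yields that the image of $\iota_t^*$ is $\ker N$, not all of $H^2(S_t,\QQ)$, and your sequence is not proved exact on the right.

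The second gap is the kernel identification, which you yourself flag as ``the hardest step'' and then defer. Your rank count is also off: the figure $13$ is wrong (your own formula $(3+i)+(1+(9-i))-1$ evaluates to $12$), and neither is the relevant number. Since $\Pic^0(\widetilde{R}) \cong \Pic^0(E) \cong E$ is a continuous factor and $\Pic(X)$ is the fibre product of $\Pic(\widetilde{R})$ and $\Pic(\widetilde{P})$ over $\Pic(T)$, the correct count is $\rk \Pic(X) = (2+i)+(10-i)-1 = 11$, so the kernel of $\iota_t^*$ has rank exactly $1$ and there is no ``overshoot'' to resolve; what remains is to show $\xi$ generates the kernel integrally (cleanest via $\xi = \O_{\mathfrak{X}}(-\widetilde{P})|_X$, which visibly restricts to $\O_{S_t}$) and that the torsion class $K_X$ maps to $K_{S_t}$. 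This bookkeeping \emph{is} the content of the theorem, so postponing it leaves the proof incomplete. Finally, a smaller but genuine gap in your first step: Friedman's smoothing theorem in \cite{fri} applies to $d$-semistable surfaces with \emph{trivial} dualizing sheaf, whereas here $K_X$ is nontrivial $2$-torsion; producing the smoothing with smooth total space (and with $S_t$ general in its $10$-dimensional moduli, which your $9+1$ dimension count only suggests) requires the equivariant/double-cover refinement of that theory --- precisely what the results of \cite{kn-JMPA} invoked by the paper provide.
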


\begin{proof}
  This follows from \cite[Prop. 3.7, Thm. 3.10 and Cor. 3.11]{kn-JMPA} in the case where $X$ lies in $\D_{[2]}^*$. Once we have the statement in this case, we can prove it in the other cases by making a birational transformation of $\mathfrak{X}$ to flop any of the exceptional curves between $\widetilde{P}$ and $\widetilde{R}$ (see for example \cite [\S 4.1]{cm}, where the flop is called a {\it 1--throw}).  
\end{proof}

\section{Logarithmic Severi varieties} \label{sec:GSV}

Theorem \ref {thm:main} will be proved by degenerating a general Enriques surface to a surface $\widetilde R\cup_T  \widetilde{P}$ in $\D^*$. It will be essential to construct curves on $\widetilde R\cup_T  \widetilde{P}$ that will deform to nodal irreducible elliptic curves on the general Enriques surface. As we will see in \S \ref{sec:prepar}, the good limit curves on  $\widetilde R$ and $  \widetilde{P}$ are nodal curves with high order tangency with $T$ at the same point on each component. These are members of so-called {\it logarithmic Severi varieties}, parametrizing nodal curves with given tangency conditions to a fixed curve. This will be the topic of this section.
We start with some general definitions and results:

  \begin{defn} \label{def:GSV}
    Let $S$ be a smooth projective surface, $T \subset S$ a smooth, irreducible curve and  
    $L$ a line bundle or a divisor class on $S$.
Let $g$ be an integer satisfying $0 \leq g \leq p_a(L)$.

For any effective divisor $\d=m_1p_1+\cdots+m_lp_l$ on $T$, where the
$p_i$ are pairwise distinct, we denote by $V_{g,\d}(S,T, L)$ the locus of curves in $S$ such that
    \begin{itemize}
    \item $C$ is irreducible of geometric genus $g$ and algebraically equivalent to $L$,
    \item denoting by $\nu:\widetilde{C} \to S$ the normalization of $C$ composed with the inclusion $C \subset S$, there exists $q_i \in \nu^{-1}(p_i)$ such that $\nu^*T$ contains $m_iq_i$, for all $i \in \{1,\ldots,l\}$.
    \end{itemize}

    For any integer $m$ satisfying $0<m \leq L \cdot T$ we let
    $V_{g,m}(S,T, L)$ denote the locus of curves contained in some
$V_{g,mp}(S,T, L)$ for some (non-fixed) $p \in T$.

We denote by $V_{g,m}^*(S,T, L)$ the open sublocus of $V_{g,m}(S,T, L)$
parametrizing curves that are {\it smooth} at the intersection points with $T$ and otherwise {\it nodal}.
\end{defn}
  
In the sequel $\equiv$ will denote numerical equivalence of divisors.  We will need: 

\begin{prop} \label{prop:TD}
  Let $S,T,L,\d,g$ and $m$ be as in Definition \ref{def:GSV}. Assume that $T \equiv -K_S$.
  \begin{itemize}
  \item[(i)] If  $L \cdot T > \sum_{i=1}^l m_i$,
    % and $V_{g,\d}(S,T, L) \neq \emptyset$,
    then all irreducible components of $V_{g,\d}(S,T, L)$ have
    dimension $g-1+L \cdot T - \sum_{i=1}^l m_i$.
  \item[(ii)] %If $V_{g,m}(S,T, L) \neq \emptyset$, then
    All irreducible components of $V_{g,m}(S,T, L)$ have dimension
    $g+L \cdot T-m$.
    \item[(iii)] If $m \leq L \cdot T-2$, then the general member $[C]$
      in any component of $V_{g,m}(S,T, L)$ is smooth at its intersection
      points with $T$;
      moreover, if we fix $G\subset S$ any curve not having $T$ as an
      irreducible component, and $\Gamma \subset S$ any finite set,
      then for general $[C]$,
      the curve $C$ is tranverse to $G$ and does not intersect
      $\Gamma$. 
      \item[(iv)] If $m \leq L \cdot T-3$, then the general member in any component of $V_{g,m}(S,T, L)$ is nodal.
 \end{itemize} 
\end{prop}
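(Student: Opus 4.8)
The plan is to treat all four parts through the deformation theory of the normalization map $\nu\colon \widetilde C\to S$ together with the prescribed contact conditions along $T$, the point being that the hypothesis $T\equiv -K_S$ makes every relevant degree computable in terms of $L\cdot T$, $g$ and the $m_i$, and that the four numerical hypotheses are calibrated exactly to successive positivity thresholds for a twisted normal bundle. Writing $N_\nu=\coker(T_{\widetilde C}\to \nu^*T_S)$ for the normal sheaf of the map, the fundamental identification to establish is that the first-order deformations of $[C]$ inside $V_{g,\d}(S,T,L)$ preserving the contact order $m_i$ at each fixed $p_i$ are governed by $H^0\bigl(\widetilde C, N_\nu(-\sum_i m_i q_i)\bigr)$, with obstructions in the corresponding $H^1$; for the moving-point variety $V_{g,m}(S,T,L)$ of part (ii) the relevant sheaf is instead $N_\nu(-(m-1)q)$, the extra unit of freedom accounting for the motion of the contact point along $T$.

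Granting this, the degree computation is where $T\equiv -K_S$ enters: from $0\to T_{\widetilde C}\to \nu^*T_S\to N_\nu\to 0$ one gets $\deg N_\nu = -K_S\cdot L-(2-2g)=L\cdot T+2g-2$ for an immersion, so by Riemann--Roch $\chi\bigl(N_\nu(-\sum_i m_i q_i)\bigr)=L\cdot T+g-1-\sum_i m_i$ and $\chi\bigl(N_\nu(-(m-1)q)\bigr)=L\cdot T+g-m$, matching the dimensions asserted in (i) and (ii). Each component has dimension at least this Euler characteristic by the general lower bound of deformation theory, so it suffices to prove the matching upper bound by showing that $H^1$ of the relevant twist vanishes at a general member. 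Since those twists have degrees $L\cdot T+2g-2-\sum_i m_i$ and $L\cdot T+2g-1-m$ respectively, the strict inequality $L\cdot T>\sum_i m_i$ of (i), respectively $m\le L\cdot T$ of (ii), forces the degree to exceed $2g-2$, whence $H^1=0$ by Serre duality on $\widetilde C$; this gives smoothness of the expected dimension and settles (i) and (ii).

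For (iii) and (iv) I would analyse a general member $[C]$ inside the fixed-point variety $V_{g,mp}(S,T,L)$ containing it, where the governing bundle is $N_\nu(-mq)$, of degree $L\cdot T+2g-2-m$. The stronger hypotheses push this past the next two positivity thresholds: $m\le L\cdot T-2$ makes the degree at least $2g$, hence the bundle base-point-free, and $m\le L\cdot T-3$ makes it at least $2g+1$, hence very ample. Base-point-freeness lets one prescribe the normal deformation at any chosen point of $\widetilde C$, so a Bertini-type argument shows that the general member acquires no forced singularity at its contact points with $T$ beyond the prescribed tangency, is transverse to any given curve $G\not\supset T$, and avoids any given finite set $\Gamma$, proving (iii); very ampleness additionally separates points and tangent directions, allowing each singular point of the image to be unfolded independently, so that the general member has only nodes, which proves (iv) by the classical argument that the loci of worse-than-nodal curves then have strictly smaller dimension.

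The main obstacle is the fundamental identification and, with it, the treatment of members that are not immersions: before (iv) is available one cannot assume $C$ is nodal, so $\nu$ may be ramified and $N_\nu$ may carry torsion, lowering the degree of its locally free part and threatening the vanishing $H^1=0$. I would handle this by working throughout with the equigeneric deformation theory of the map, replacing $N_\nu$ by its locally free part $\bar N_\nu$ and carefully accounting for the ramification contribution at the preimages of the singular points, and by checking that the contact conditions along $T$ interact with this locally free part exactly as in the computation above. The positivity hypotheses are then precisely what guarantee that, at a general member of each component, the ramification cannot occur on $T$ and does not destroy the cohomological vanishing.
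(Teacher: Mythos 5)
The paper does not give an independent proof of this statement: it is quoted from \cite[\S 2]{CH} as organized in \cite{noteTD}, and your outline reconstructs exactly the framework of those references, with the correct numerology --- from $T\equiv -K_S$ one gets $\deg N_\nu = L\cdot T+2g-2$, and the hypotheses of (i)--(iv) are precisely the thresholds $>2g-2$, $\geq 2g$, $\geq 2g+1$ for the relevant twists. The genuine gap is that the step you defer to your last paragraph is not a technical refinement but the actual mathematical content of \cite[\S 2]{CH}, and your proposed way around it does not work. The tangent-space bound $\dim V\leq h^0$ must be run with the locally free quotient $\overline{N}_\nu=N_\nu/\mathrm{torsion}$, whose degree is $L\cdot T+2g-2-\deg Z$, where $Z$ is the effective divisor supported at the non-immersive points of $\nu$. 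Under the hypothesis of (i) the margin above $2g-2$ may be as small as $1$, so a \emph{single} non-immersive point already destroys the inequality $\deg>2g-2$; dually, $h^1\bigl(\overline{N}_\nu(-D)\bigr)=h^0\bigl(\nu^*\omega_S\otimes\O_{\widetilde{C}}(Z+D)\bigr)$ with $D=\sum_i m_iq_i$, and this has degree $\deg Z-(L\cdot T-\sum_i m_i)$, which is not negative once $Z\neq 0$. So for (i) and (ii) the Serre-duality degree count alone proves nothing at a general member of an \emph{arbitrary} component, which a priori could consist entirely of non-immersed curves.

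Your fix is circular. You cannot invoke (iv) to exclude ramification (as you note), but neither can you assert that ``the positivity hypotheses are precisely what guarantee that ramification cannot occur at a general member'': generic immersedness of (log-)equigeneric families is a theorem of Arbarello--Cornalba/Zariski--Harris type, extended to tangency conditions in \cite[\S 2]{CH}, and it is proved there by a genuinely separate argument (a local analysis of sections of $\overline{N}_\nu$ at cusps via the equigeneric/adjoint ideal, combined with the comparison between the bound $h^0(\overline{N}_\nu(-D))$ and the deformation-theoretic lower bound for $\dim V$); it is not a formal consequence of the degree hypotheses, and the known proofs use the very dimension estimates you are trying to establish. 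Until that input is supplied, (i)--(ii) are unproved, and (iii)--(iv) inherit the gap: their rigorous mechanism is again a dimension count (a cusp, tacnode, triple point, or non-transverse intersection forces the tangent space into $H^0$ of a further twist down, whose $h^0$ then falls below the lower bound for $\dim V$), which relies on the same non-speciality statement --- not on a literal Bertini or very-ampleness argument, since the sections of $\overline{N}_\nu(-mq)$ do not form a linear system on $S$ to which Bertini could be applied.
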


\begin{proof}
  The result follows from  \cite[\S 2]{CH}, as outlined in \cite[Thm. (1.4)]{noteTD}. 
\end{proof}

\subsection{Families of blown-up surfaces}
\label{s:logS-family}

\label{cazztost} We will also need to work in families in the following way.  For $S=R$ or $P$ containing $T$ as above and for any nonnegative integer $n$ 
we consider the family
$\mathcal{S}^{\langle n\rangle} \to T^n$ with fiber
$\Bl_{y_1,\ldots,y_n}(S)$ over $(y_1,\ldots,y_n) \in T^n$, the blow--up of $S$ at $y_1,\ldots,y_n$ (when the points are coinciding, this has to be interpreted as blowing up curvilinear schemes on $T$). To be precise, the fibers are {\it marked}, in the sense that their (total) exceptional divisors are labelled with $1,\ldots,n$.  Whenever we have a line bundle on a single surface $\Bl_{y_1,\ldots,y_n}(S)$, we can write it in terms of the generators of $\Pic (S)$ and of the exceptional divisors over each $y_i$, and thus we can extend it to a relative line bundle on the whole family $\mathcal{S}^{\langle n\rangle}$ in the obvious way. We will therefore mostly not distinguish notationally between a relative line bundle $L$ and its restriction to any surface in the family.

Similarly, there is for all $i=1,\ldots,n$ a relative (total) exceptional divisor $\e_i$ on $\mathcal{S}^{\langle n\rangle}$, whose fiber over a point $(y_1,\ldots,y_n) \in T^n$ is the exceptional divisor
on $\Bl_{y_1,\ldots,y_n}(S)$ over the point $y_i$, which we by abuse of notation still denote by $\e_i$. 

\begin{defn} \label{def:posi}
  Let $L$ be a relative line bundle on $\mathcal{S}^{\langle n\rangle}$. The {\it value of $L$ on the $i$th exceptional divisor} is the number $L \cdot \e_i$ on any fiber $\Bl_{y_1,\ldots,y_n}(S)$.
We say that $L$ is {\it positive on the $i$th exceptional divisor} if
 $L \cdot \e_i>0$. 
\end{defn}

We shall consider the relative Hilbert scheme
\[ \H^{\langle n\rangle}_{S,L} \longrightarrow T^n \]
whose fibers are the Hilbert schemes of curves on $\Bl_{y_1,\ldots,y_n}(S)$ algebraically (or equivalently numerically)  equivalent to $L$. 
We have a (possibly empty) scheme
\[ \V^{\langle n\rangle}_{g,m}\left(S,T,L\right) \longrightarrow T^n \]
whose fibers are $V_{g,m}(\Bl_{y_1,\ldots,y_n}(S),T,L) \subset \H^{\langle n\rangle}_{S,L}$ (here, as usual, we denote by $T$ its strict transform on the blow--up).
Taking the closure in $\H^{\langle n\rangle}_{S,L}$, we obtain a (possibly empty) scheme with a morphism
\begin{equation} \label{eq:defnu}
 \nu^{\langle n\rangle}_{g,m}\left(S,T,L\right): \overline{\V^{\langle n\rangle}_{g,m}\left(S,T, L\right)} \longrightarrow T^n ,
  \end{equation}
whose fibers we denote by
\[
 (\nu^{\langle n\rangle}_{g,m})^{-1}(y_1,\ldots,y_n):=
  \overline{V}^{\langle n\rangle}_{g,m}\left(\Bl_{y_1,\ldots,y_n}(S),T,L\right).
\]
Note that for any $(y_1,\ldots,y_n) \in T^n$ one has
\[
 \overline{V_{g,m}(\Bl_{y_1,\ldots,y_n}(S),T,L)}\subseteq \overline{V}^{\langle n\rangle}_{g,m}\left(\Bl_{y_1,\ldots,y_n}(S),T,L\right).\]

\subsection{Logarithmic Severi varieties on blow--ups of the symmetric square of an elliptic curve} \label{sec:GSVR}

Let $T \subset R=\Sym^2(E)$ as defined in \S \ref{sec:flatlim}. 
Let $y_1,\ldots,y_n \in T$ and let $\widetilde R:=\Bl_{y_1,\ldots,y_n}(R)$ denote the blow--up of $R$ at $y_1,\ldots,y_n$, with (total) exceptional divisors $\e_i$ over $y_i$. We denote the strict transforms of $\s$, $\f$ and $T$ on $\widetilde R$ by the same names. We also still denote by $\pi:\widetilde{R} \to E$ the composition of the blow--up $\widetilde{R} \to R$ with the Albanese morphism $R \to E$ (cf.\ beginning of
\S \ref{sec:flatlim}). 
By \eqref{eq:T3}--\eqref {eq:T33} we have
\[ T \equiv -K_{\widetilde R} \equiv 2\s-\f-\e_1-\cdots-\e_n.\]

\begin{defn} \label{def:odd}
   A line bundle or Cartier divisor $L$ on $\widetilde{R}$ is {\it odd} if $L \cdot \f$ is odd.
\end{defn}

\begin{notation}\label{def:symtnm} We denote by
$ \Sym^n(T)_m \subset \Sym^n(T)$
the subscheme consisting of divisors with a point of multiplicity $\geq m$.
\end{notation}

\begin{lemma} \label{lemma:R3}
  Let $L$ be an odd line bundle or Cartier divisor on $\widetilde{R}$. 
Let $m$ be any integer satisfying $1 \leq m \leq L \cdot T$. Then the following hold:

 \begin{itemize} 
 \item [(i)] No curve $C$ in $\overline{V}_{1,m}(\widetilde{R},T ,L)$ contains $T$.

\item [(ii)] For any component $V \subset \overline{V}_{1,m}(\widetilde{R},T, L)$ the restriction map
\begin{eqnarray*}
  V & \longrightarrow & \Sym^{L \cdot T}(T)_m \\
  C &\mapsto & C \cap T
\end{eqnarray*}
is well-defined, finite and surjective. In particular,
$$
\dim(V)=L\cdot T-m+1.
$$
\item[(iii)]  For a general curve $C$ in any component of
$V^*_{1,m}(\widetilde{R},T ,L)$, let $N$ be the reduced subscheme of
$\widetilde R$ supported at the nodes of $C$, and $Z$ any subscheme of $C \cap T$ of degree $C \cdot T-1$. Then the linear system $|\O_{\widetilde{R}}(C) \* \I_{N \cup Z}|$ consists only of $C$.
  
\end{itemize}
\end{lemma}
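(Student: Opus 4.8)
The plan is to treat the three parts in order, using Proposition~\ref{prop:TD} for the numerics and exploiting in an essential way that $\widetilde R$ is a blow--up of a ruled surface over the elliptic curve $E$: the map $\pi\colon\widetilde R\to E$ contracts every rational curve (there is no nonconstant morphism $\PP^1\to E$), and the irregularity is $q=h^1(\O_{\widetilde R})=1$.

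For (i) I argue by contradiction. A curve $C\in\overline{V}_{1,m}(\widetilde R,T,L)$ is a flat limit of irreducible curves of geometric genus $1$; passing to the family of stable maps given by their normalizations, the limit is a stable map from a nodal curve of arithmetic genus $1$ whose image is $C$. If $T\subseteq C$, then some component of the source dominates $T$; since $T$ has genus $1$ and the total genus is $1$, that component has genus $1$, all remaining components are rational, and the dual graph is a tree, so $C=kT+C'$ with $k\ge 1$ and $C'$ a union of rational curves. Every rational curve on $\widetilde R$ is contracted by $\pi$, hence lies in a fibre and satisfies $(\,\cdot\,)\cdot\f=0$; therefore $C\cdot\f=k\,(T\cdot\f)+C'\cdot\f=2k$ is even, contradicting that $L\cdot\f$ is \emph{odd}. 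This is where the oddness hypothesis is used decisively.

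For (ii), the dimension statement $\dim V=L\cdot T-m+1$ is exactly Proposition~\ref{prop:TD}(ii) with $g=1$ (recall $T\equiv-K_{\widetilde R}$), and $\overline V$ has the same dimension. By (i) no member contains $T$, so $C\mapsto C\cap T$ is a well--defined morphism $\mu\colon\overline V\to\Sym^{L\cdot T}(T)$, and the tangency condition, being closed, forces its image into $\Sym^{L\cdot T}(T)_m$. The target is irreducible of dimension $L\cdot T-m+1$, being dominated by $T\times\Sym^{L\cdot T-m}(T)$ via $(p,D)\mapsto mp+D$, i.e.\ of the same dimension as $\overline V$. Since $\overline V$ is projective, $\mu$ is proper and its image is closed; I will check that $\mu$ is generically finite, whence the image has full dimension and, the target being irreducible, $\mu$ is surjective. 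Generic finiteness (equivalently quasi--finiteness, which with properness yields finiteness) is the boundary case $\sum m_i=L\cdot T$ of the count in Proposition~\ref{prop:TD}(i): a positive--dimensional fibre would be a positive--dimensional family of genus--$1$ curves with the \emph{entire} intersection $C\cap T$ fixed, contradicting the dimension bound.

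For (iii), set $\delta=p_a(L)-1=\tfrac12(L^2-L\cdot T)$, the number of nodes of a general $C$, and note $N\cap T=\emptyset$ since members of $V^*$ are smooth along $T$. Suppose $C'\neq C$ lies in $|\O_{\widetilde R}(C)\otimes\I_{N\cup Z}|$. If $C'\not\supseteq T$, then $C'\cap T$ and $C\cap T$ are linearly equivalent divisors of degree $L\cdot T$ on $T$ sharing the subscheme $Z$ of degree $L\cdot T-1$; writing $C\cap T=Z+v$ and $C'\cap T=Z+w$, linear equivalence gives $v\sim w$, hence $v=w$ on the \emph{elliptic} curve $T$. Thus every member of the pencil $\langle C,C'\rangle$ restricts to the fixed divisor $C\cap T$ on $T$, and since the restrictions to $T$ of the two defining sections are proportional, exactly one member restricts to zero on $T$, i.e.\ contains $T$; writing it as $T+D$ yields $D\in|\O_{\widetilde R}(L-T)|$ through $N$. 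If instead $C'\supseteq T$ one obtains such a $D$ directly. Hence (iii) is equivalent to
\[ h^0\big(\widetilde R,\ \O_{\widetilde R}(L-T)\otimes\I_N\big)=0. \]
Because $\O_{\widetilde R}(L-T)\equiv\omega_{\widetilde R}(C)$, adjunction together with passage to the normalization $\nu\colon\widetilde C\to C$ (with $\widetilde C$ elliptic) identifies any such section, restricted to $C$ and pulled back, with a section of $\omega_{\widetilde C}$ vanishing at the $2\delta$ node preimages; as $h^0(\omega_{\widetilde R})=h^0(\O(-T))=0$ and $h^0(\omega_{\widetilde C})=1$, this gives the a priori bound $h^0(\O_{\widetilde R}(L-T)\otimes\I_N)\le1$. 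The step I expect to be the main obstacle is excluding this last dimension: one must show that the unique holomorphic differential on $\widetilde C$ does \emph{not} descend to an adjoint curve on $\widetilde R$ through the nodes, equivalently that the connecting homomorphism $H^0(\omega_C\otimes\I_N)\to H^1(\omega_{\widetilde R})$ into the one--dimensional space $H^1(\omega_{\widetilde R})\cong H^1(\O_{\widetilde R})^{\vee}$ is nonzero. This is exactly where the irregularity $q=1$ intervenes, and I would prove the non--vanishing for a general $[C]$ in the component either (a) by semicontinuity, exhibiting a single (possibly degenerate) curve in each component for which the group vanishes, or (b) by computing the coboundary through the ruling $\pi$, using that the differential on $\widetilde C$ is pulled back from $E$ and that the node configuration is suitably general, the latter being guaranteed by the surjectivity established in (ii).
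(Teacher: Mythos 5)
Your part (i) is correct and is essentially the paper's own argument: if $T\subset C$ then the genus-$1$ constraint forces the residual components to be rational, hence contracted by $\pi\colon\widetilde R\to E$, so $C\cdot\f$ would be even, contradicting oddness. The genuine problems are in (ii) and (iii).

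In (ii) the finiteness step does not hold up. First, generic finiteness is \emph{not} ``equivalently quasi-finiteness'': the lemma asserts the restriction map is finite, so every fiber must be finite, not just the general one. Second, you justify finiteness by the ``boundary case $\sum m_i=L\cdot T$'' of Proposition~\ref{prop:TD}(i), but that proposition assumes the strict inequality $L\cdot T>\sum m_i$, and the extrapolation is not automatic: when $g=1$ and the \emph{entire} intersection with $T$ is fixed, the usual tangent-space bound is $h^0$ of a line bundle of degree $0$ on the elliptic normalization, which is $\le 1$ rather than $0$, so the dimension count by itself does not exclude a $1$-dimensional fiber. Moreover, the fibers of the map on the closure $\overline V$ contain limit curves (reducible, non-reduced, of lower genus), to which Proposition~\ref{prop:TD}(i) — a statement about irreducible genus-$1$ curves — does not apply at all. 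The paper's argument avoids all of this: a positive-dimensional fiber over $Z$ is projective, so the union of its members is a \emph{closed} $2$-dimensional subset of $\widetilde R$, hence all of $\widetilde R$; then some member passes through a general point of $T$, which is absurd since every member of the fiber meets $T$ exactly in $Z$ and none contains $T$ by (i).

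In (iii) the gap is the one you flag yourself. Your reduction of (iii) to the vanishing $h^0\bigl(\O_{\widetilde R}(C-T)\otimes\I_N\bigr)=0$ and the a priori bound $h^0\le 1$ are fine, but the entire content of the statement is the exclusion of $h^0=1$, and that is exactly what you leave to two unexecuted strategies: (a) requires exhibiting the special curve, which you do not do, and (b) is only a heuristic. The paper closes this by a mechanism absent from your proposal: blow up $\widetilde R$ along $Z$ (a curvilinear scheme on $T$) to reduce to tangency order $1$; then $V^*_{1,1}(\widehat R,\widehat T,\widehat L)$ is a smooth, pure $1$-dimensional open subset of the Severi variety of $\delta$-nodal curves, fibered over $\Pic^0(E)\cong E$ by the linear equivalence class; by part (ii) the class varies along any component, so the fiber $W_{\widehat L'}$ through a general member is nonempty, smooth and $0$-dimensional, and since its tangent space at $[X]$ is $H^0(\widehat L'\otimes\I_N)/\CC$, this forces $h^0(\O(X)\otimes\I_N)=1$, i.e.\ the linear system is $\{X\}$. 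This is in effect your strategy (b) made rigorous — the variation of the linear class supplied by (ii) is what makes ``the coboundary nonzero'' — but the actual implementation (the blow-up along $Z$ together with smoothness of the Severi variety) is the substance of the proof, and it is missing from your proposal.
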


\begin{proof}
  Assume that we have $C=hT+C'$ in $\overline{V}_{1,m}(\widetilde{R},T, L)$ for some $h>0$, with $C'$ not containing $T$. 
  We have $L \cdot \f=C \cdot \f=2h+C' \cdot \f$, whence $C' \cdot \f>0$ since $L$ is assumed to be odd. Hence $C'$ has at least one component dominating $E$ via $\pi: \widetilde{R} \to E$, and therefore
  $C$ cannot be a limit of an elliptic curve. Thus (i) follows.

It also follows that the restriction map in (ii) is everywhere defined.
The fiber over a $Z \in \Sym^{L \cdot T}(T)$ consists of all curves $C$ in $V$
such that $C \cap T=Z$. This must be finite, for otherwise we would
find a member of the fiber passing through an additional general point $p \in T$, a contradiction (using again that no curve in $V$ contains $T$). Hence the restriction morphism  in (ii) is finite. 
We have $\dim (V)\geq L \cdot T+1-m$ by Proposition \ref{prop:TD}(ii) and semicontinuity, which equals $\dim \left(\Sym^{L \cdot T}(T)_m\right)$. The morphism is therefore surjective and equality holds for the dimension.  This proves (ii).

Let now $C$ be a curve in $V^*_{1,m}(\widetilde{R},T ,L)$ and $Z$ be any subscheme of $C \cap T$ of degree $C \cdot T-1$. Let $\widehat{R} \to \widetilde{R}$ denote the blow--up of $\widetilde{R}$ along $Z$, considered as a curvilinear subscheme of $T$, and let $\widehat{C}$ and $\widehat{T}$ denote the strict transforms of $C$ and $T$, respectively, and $\widehat{L}:=\O_{\widehat{R}}(\widehat{C})$. Then $\widehat{C}$ is a member of $V^*_{1,1}(\widehat{R},\widehat{T},\widehat{L})$. To prove (iii)  we may reduce to proving that if $X$ is a general member of a component of $V^*_{1,1}(\widehat{R},\widehat{T},\widehat{L})$, and $N$ is the subscheme of its nodes, then the linear system $|\O_{\widehat{R}}(X) \* \I_N|$ consists only of $X$. 

Let $\delta=p_a(C)-1$. The variety $V^*_{1,1}(\widehat{R},\widehat{T},\widehat{L})$ is the open subset of the {\it Severi variety of $\delta$-nodal curves algebraically equivalent to $\widehat{L}$} consisting of curves with nodes off $\widehat{T}$. All of its components have dimension $\widehat{L} \cdot \widehat{T}=1$ by (ii) (or Proposition \ref{prop:TD}(ii)), and it is smooth by standard arguments (see, e.g., \cite[Prop. 2.2]{cdgk2}). Let $W$ be any component of $V^*_{1,1}(\widehat{R},\widehat{T},\widehat{L})$. Then $W$ is fibered over $\Pic^0(E) \cong E$ in subvarieties $W_{\widehat{L}'}$ parametrizing $\delta$-nodal curves in $|\widehat{L}'|$, where $\widehat{L}'$ is any line bundle numerically equivalent to $\widehat{L}$. By (ii) the linear equivalence classes of the curves in $W$ vary. Thus $W_{\widehat{L}'}$ is nonempty for general $\widehat{L}'$, whence  smooth and zero-dimensional. The tangent space to $W_{\widehat{L}'}$ at any point $[X]$ is isomorphic to $H^0(\widehat{L}' \* \I_N)/\CC$, where $N$ is the scheme of nodes of $X$ (see, e.g., \cite[\S 1]{CS}). In particular, for
a general $X$ in $W$ we have
 \[
 \dim \left(|\O_{\widehat{R}}(X) \* \I_N|\right)=
 h^0(\O_{\widehat{R}}(X) \* \I_N) -1  =  \dim (W_{\widehat{L}'})=0,
\]
whence $|\O_{\widehat{R}}(X) \* \I_N|$ consists only of
 $X$, as desired. This proves (iii).
\end{proof}

In view of part (iii) of the previous result, we  introduce  the following:

\begin{notation} \label{def:dustelle}
  We let $V^{**}_{1,m}(\widetilde R,T, L)$ denote the open subvariety of $V^{*}_{1,m}(\widetilde R,T, L)$ para- \linebreak metrizing  curves $C$ such that, for $N$ its scheme of nodes and for every subscheme $Z$ of $C \cap T$ of degree $C \cdot T-1$, the linear system $|\O_{\widetilde{R}}(C) \* \I_{N \cup Z}|$ consists only of $C$.
\end{notation}

The main existence result of this subsection is Proposition \ref{prop:R2} right below. To state it we need a definition:

\begin{defn} \label{def:good}
  A line bundle or Cartier divisor $L$ on $\widetilde R$ is said to verify condition $(\star)$ if it is of the form $L\equiv \alpha\s+\beta \f-\sum_{i=1}^n \gamma_i \e_{i}$  such that:\\
  \begin{inparaenum}
\item [(i)] $\alpha\geq 1$ and $\beta\geq 0$;\\
 \item [(ii)] $\alpha\geq \gamma_i$ for $i=1,\ldots, n$;\\
\item [(iii)] $\alpha+\beta\geq \sum_{i=1}^n\gamma_i$;\\ 
\item [(iv)] $\alpha+2\beta\geq \sum_{i=1}^n\gamma_i+4$ (equivalently, $-L\cdot K_{\widetilde R}\geq 4$).
\end{inparaenum}
\end{defn}

\begin{prop} \label{prop:R2}
  Let $E$ and $y_1,\ldots,y_n \in T$ be general. Assume that $L$ is a line bundle on $\widetilde R$ that is  odd (cf.\ Definition \ref{def:odd}) and satisfies condition $(\star)$ (cf.\ Definition \ref{def:good}).  Then, if $ 0< m \leq  L \cdot T-3$,
the variety
$V^{**}_{1,m}(\widetilde R,T, L)$ (cf.\  Definitions  \ref{def:GSV}
and \ref{def:dustelle}) has pure dimension $L\cdot T-m+1$.
Moreover, for all  curves 	  $G \subset \widetilde R$ not having $T$ as an
irreducible component, the general member of
$V^{**}_{1,m}(\widetilde R,T, L)$ intersects $G$ transversely.
\end{prop}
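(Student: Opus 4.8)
The plan is to reduce the entire statement to the single assertion that $V_{1,m}(\widetilde R,T,L)$ is nonempty, and then to produce a member of it by a degeneration-and-smoothing argument. The reduction is immediate from the results already at hand. Since $T\equiv -K_{\widetilde R}$, Proposition \ref{prop:TD} applies with $S=\widetilde R$. Granting nonemptiness, Lemma \ref{lemma:R3}(ii) shows that every component of $\overline{V}_{1,m}(\widetilde R,T,L)$ has dimension $L\cdot T-m+1$, which gives purity. Because $m\leq L\cdot T-3$, parts (iii) and (iv) of Proposition \ref{prop:TD} show that the general member of each such component is nodal and smooth at its intersection points with $T$, so $V^{*}_{1,m}(\widetilde R,T,L)$ is dense in each component; Lemma \ref{lemma:R3}(iii) then shows that the condition defining Definition \ref{def:dustelle} holds on a dense open subset, whence $V^{**}_{1,m}(\widetilde R,T,L)$ is dense open of pure dimension $L\cdot T-m+1$. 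Finally, the transversality of the general member to a given curve $G$ not having $T$ as a component is exactly the content of Proposition \ref{prop:TD}(iii). Thus everything is reduced to nonemptiness.

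To prove nonemptiness I would fix a general point $p\in T$ and exhibit a connected reducible nodal curve $C_{0}\equiv L$, having exactly one component of positive genus, that smooths to a member of $V_{1,m}(\widetilde R,T,L)$ realizing contact order $m$ with $T$ at $p$. For the genus-carrying component take a general smooth elliptic curve $\s_{e}$ in the section class $\s$, which meets $T$ transversally in a single point since $\s\cdot T=1$; let the remaining components be rational curves in the classes $\s$, $\f$, $\e_{i}$ together with one rational curve having contact order $m$ with $T$ at $p$, chosen so that the classes sum to $L$, so that $C_{0}$ is nodal, and so that all components meet $T$ at distinct points (with the single tangency at $p$) and meet one another transversally away from $T$. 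Smoothing the subset of nodes of $C_{0}$ corresponding to a spanning tree of its dual graph, all chosen off $T$, then yields an irreducible nodal curve whose geometric genus equals the sum of the geometric genera of the components of $C_{0}$, namely $1$, and which retains contact order $m$ with $T$ at $p$; this is the desired member of $V_{1,m}(\widetilde R,T,L)$.

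The conditions $(\star)$ of Definition \ref{def:good} are exactly what guarantee that such a configuration $C_{0}$ exists. Conditions (i)--(iii) ensure that the complementary class $L-\s$ is effective and sufficiently positive, and that the multiplicities $\gamma_{i}$ along the exceptional divisors can be absorbed, so that the rational part can be assembled from curves in the classes $\s,\f,\e_{i}$ with all relevant intersection numbers nonnegative and the dual graph connected; condition (iv), equivalent to $L\cdot T\geq 4$, provides the room for the prescribed tangency since $m\leq L\cdot T-3$ and since the numerical admissibility $(L-\s)\cdot T=L\cdot T-1\geq m$ holds. The production of the single $m$-fold tangent rational component is itself a genus-$0$ logarithmic Severi statement, which I would settle by a further purely combinatorial degeneration and a genus-$0$ smoothing (using the blow-up description of high-order tangency already exploited in the proof of Lemma \ref{lemma:R3}(iii)); equivalently, one may degenerate $L$ in a single step to a fully reducible nodal curve and smooth once.

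The \emph{main obstacle} is the smoothing step. One must show that the chosen spanning-tree subset of nodes of $C_{0}$ can be smoothed independently, and that the resulting family has an irreducible general member lying in the open stratum $V^{*}_{1,m}(\widetilde R,T,L)$ (nodal, smooth along $T$, with contact order exactly $m$ at $p$) rather than in a worse degeneration. This requires an unobstructedness argument for the partial smoothing; it is here that the positivity encoded in $(\star)$ and the genericity of $E$ and of $y_{1},\dots,y_{n}$ enter, and it is here that the irregularity of $\widetilde R$ must be handled with care. The accompanying delicate point is to verify that the tangency of order $m$ with $T$ at $p$ is preserved under the smoothing and that the limit neither acquires extra tangency nor moves the point $p$. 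Once these are established, Lemma \ref{lemma:R3}(ii) pins down the dimension and the reduction of the first paragraph completes the proof.
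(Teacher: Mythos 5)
Your first paragraph (the reduction of the whole statement to nonemptiness of $V_{1,m}(\widetilde R,T,L)$ via Proposition \ref{prop:TD} and Lemma \ref{lemma:R3}) is correct and is exactly how the paper proceeds. The nonemptiness construction, however, has a fatal flaw: the configuration $C_0$ you want to assemble does not exist on $\widetilde R$. The surface $\widetilde R$ is a blow--up of $\Sym^2(E)$, whose Albanese map is the projection $\pi\colon \widetilde R \to E$; a rational curve admits no non-constant map to $E$, so every irreducible rational curve on $\widetilde R$ is contained in a fiber of $\pi$. The irreducible rational curves on $\widetilde R$ are therefore exactly the exceptional curves and the components of fibers, and each of them meets $T$ in at most $\f \cdot T = 2$ points counted with multiplicity. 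Hence for $m \geq 3$ there is \emph{no} rational curve on $\widetilde R$ with contact of order $m$ with $T$: the ``$m$-fold tangent rational component'' at the heart of your configuration cannot be produced by any genus-$0$ logarithmic Severi argument, because the relevant variety is empty. For the same reason your list of available rational classes is wrong: an irreducible curve in the class $\s$ is a section of the $\PP^1$-bundle $R \to E$, hence isomorphic to $E$ and elliptic, never rational. Consequently, when $\alpha = L\cdot \f \geq 2$, any nodal configuration of class $L$ built from curves in the classes $\s$, $\f$, $\e_i$ necessarily contains $\alpha$ elliptic components, and smoothing a spanning tree of nodes yields geometric genus $\alpha$, not $1$. (The deformation-theoretic smoothing step that you flag as the ``main obstacle'' is also genuinely needed and left unproved, but the objections above come before it.)

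The paper's proof is shaped precisely by these constraints: the high-order tangency is never carried by a rational curve, but by the irreducible elliptic curve itself, and it is created by degeneration \emph{inside} the Severi variety rather than by attaching tangent rational tails. Concretely, nonemptiness is proved by induction on $m$, following an idea of Chen--Gounelas--Liedtke: the base case $m=1$ is quoted from \cite[Prop. 2.3]{cdgk2}; for the inductive step one takes a general member of $V_{1,m}(\widetilde R,T,L)$, fixes all but one of its transverse intersection points with $T$ (obtaining the one-dimensional family $V_{1,\d}(\widetilde R,T,L)$ of Proposition \ref{prop:TD}(i)), compactifies it by a family of genus-one stable maps, and shows via a Stein-factorization/connectedness argument that in some limit the moving intersection point collides with the tangency point. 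The generality of the fixed points (no two in a fiber, none on a $(-1)$-curve) excludes all rational components from that limit --- again because rational curves on $\widetilde R$ live only in fibers and exceptional loci --- so the limit is an irreducible elliptic curve with contact of order $m+1$. If you want a construction-and-smoothing proof in the spirit of what you wrote, it can only work on the other component $\widetilde P$ of the degeneration, where rational curves are plentiful; on $\widetilde R$ the irregularity makes your approach impossible, not merely delicate.
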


\begin{proof} By Proposition \ref{prop:TD}(ii)--(iv)  and Lemma \ref{lemma:R3}(iii)  we only need to  prove  non--emptiness of $V_{1,m}(\widetilde R,T,L)$.  Following an idea in the proof of \cite [Thm. 3.10]{CGL}, we will prove this by induction on $m$.
The base case $m=1$ follows from \cite[Prop. 2.3]{cdgk2},

which requires all of (i)--(iv) from condition $(\star)$.

Assume that we have proved non--emptiness of $V_{1,m}(\widetilde R,T,L)$ for some $1 \leq m \leq  L \cdot T-4$.  By Lemma \ref{lemma:R3}(ii) its general member $C$ satisfies
  \[ C \cap T=mp_0+p_1+\cdots+p_l+p_{l+1}, \; \; l=L \cdot T-m-1 \geq 3,\]
  where $p_0,\ldots,p_{l+1}$ are pairwise distinct, general points on $T$. 
  Set $\d=mp_0+p_1+\cdots+p_{l}$. Then $V_{1,\d}(\widetilde R,T, L) \neq \emptyset$ and all its components are one-dimensional, by Proposition \ref{prop:TD}(i). The general member in any component intersects $T$ in $mp_0+p_1+\cdots+p_{l}+q$, where the point $q$ varies in the family,  by Proposition \ref{prop:TD}(iii).  Pick a component $\overline{V}$ of its closure inside the component of the Hilbert scheme of $\widetilde R$ containing $|L|$. After a finite base change, we find a smooth projective curve $B$, a surjective morphism $B \to \overline{V}$ and a family
  \[ \xymatrix{ \C \ar[r]^{f}  \ar[d]_{g} & \widetilde R \\
      B} \]
  of stable maps of genus one such that, setting $\C_b:=g^*b$ for any $b \in B$, the curve
$f_*\C_b$ is a member of $\overline{V}$, and such that 
  \[ f^*T=mP_0+P_1+\cdots+P_l+Q+W,\]
  where
  \begin{itemize}
  \item[(I)] $P_i$ and $Q$ are sections of $g$, for $i \in \{0,\ldots,l\}$,
  \item[(II)] $f(P_i)=p_i$, for $i \in \{0,\ldots,l\}$,
\item[(III)] $f(Q)=T$,
\item[(IV)] $g_*W=0$,
  \item[(V)] $f_*W=0$;
  \end{itemize}
  the latter property follows from the fact that no member of the
  family contains $T$, by Lemma \ref{lemma:R3}(i).

Property (III) implies that $f^{-1}(T)$ is connected as follows.
Consider the Stein factorization
$\mathcal{C} \xrightarrow {f'} R' \xrightarrow h \tilde R$ of $f$.
Then $h^{-1}(T)$ is of pure dimension $1$. Since all irreducible
components of $f^*T$ except $Q$ are contracted by $f$, it follows that
$h^{-1}(T) = f'(Q)$, in particular it is irreducible.
Eventually, since $f'$
has connected fibers,
$f^{-1}(T) = (f')^{-1} (h^{-1}(T))$ is connected.

In particular, $P_0$ and $Q$ are connected by an effective  (possibly zero)  divisor $W' \subset f^{-1}(p_0) \cap \C_{b_0} \subset W$ for some $b_0 \in B$. Thus, 
  \begin{equation}\label{eq:pipi} f_* \C_{b_0} \cap T=(m+1)p_0+p_1+\cdots+p_l, \; \; l \geq 3.\end{equation} 
By the generality of the points $p_0,\ldots,p_l$, they cannot be
contained in any $(-1)$-curve on $\widetilde R$, nor can any two of
them lie in a fiber of $\pi:\widetilde R \to E$. Consequently, $ f_*
\C_{b_0}$ cannot contain any rational component.
Moreover, $f_* \C_{b_0}$ must be a reduced curve by \eqref
{eq:pipi}. Therefore $f_* \C_{b_0}=C$ is an irreducible
curve  of geometric genus one,
hence $\C_{b_0}$ consists of one smooth elliptic curve $\widetilde{C}$
such that $f(\widetilde{C})=C$ and otherwise chains of rational curves
contracted by $f$ and attached to $\widetilde{C}$ at one single point
each. 
Therefore $f^{-1}(p_0) \cap \widetilde{C}$ is a single (smooth) point
of $\widetilde{C}$, hence
$[C] \in V_{1,(m+1)p_0+p_1+\cdots+p_{l}}(\widetilde R,T, L)$
by \eqref{eq:pipi}, which implies
$[C] \in V_{1,m+1}(\widetilde R,T,L)$. 
\end{proof}

\subsection{Logarithmic Severi varieties on blown up planes} \label{sec:GSVP}

Fix a smooth cubic curve $T \subset P=\PP^2$.
Let $y_1,\ldots,y_n \in T$, for $n\geq 0$, and consider the blow--up $\widetilde P:=\Bl_{y_1,\ldots,y_n}(P) \to P$ at $y_1,\ldots,y_n$. We denote the strict transforms of the general line on $P$ by $\ell$ and by $\e_i$ the (total) exceptional divisor over $y_i$.  We denote still by $T$ the strict transform of $T$. Note that $T \sim -K_{\widetilde P} \sim 3\ell-\e_1-\cdots-\e_n$.

The next result parallels Lemma \ref{lemma:R3}.

\begin{lemma} \label{lemma:P3}
Let $L$ be a line bundle or Cartier divisor on $\widetilde{P}$. 
Let $m$ be any integer satisfying $1 \leq m \leq L \cdot T$. Then the following hold:

(i) No curve $C$ in $\overline{V}_{0,m}(\widetilde{P},T, L)$ contains $T$.

(ii) For any component $V \subset \overline{V}_{0,m}(\widetilde{P},T, L)$ the restriction map
\begin{eqnarray*}
  V & \longrightarrow & \Sym^{L \cdot T}(T)_m \\
  C &\mapsto & C \cap T
\end{eqnarray*}
is well-defined and finite, with image $|L \otimes \O_T| \cap \Sym^{L \cdot T}(T)_m$, which has  codimension one. In particular,
$$
\dim(V)=L\cdot T-m.
$$
\end{lemma}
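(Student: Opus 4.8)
The plan is to prove Lemma \ref{lemma:P3} in close parallel to Lemma \ref{lemma:R3}, exploiting the fact that the relevant curves are genus-zero curves on a blown-up plane, so that the linear equivalence class is now pinned down by the numerical class (since $\Pic(\widetilde P)$ is discrete, having no continuous part). For part (i), I would argue as in Lemma \ref{lemma:R3}(i): suppose $C=hT+C'$ with $h>0$ and $T\not\subset C'$. Since a genus-zero limit must be a degeneration of a rational curve, I would derive a contradiction from the structure of such a decomposition. The cleanest route is to observe that any curve in $\overline{V}_{0,m}(\widetilde P,T,L)$ is a limit of irreducible rational curves, hence is connected of arithmetic genus zero with all components rational; but $T$ is an elliptic curve, so it cannot appear as a component of a genus-zero curve. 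This immediately gives (i).

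For part (ii), the first task is to show the restriction map $C\mapsto C\cap T$ is well-defined, which follows from (i) exactly as in the proof of Lemma \ref{lemma:R3}. The new feature compared to the $\widetilde R$ case is the identification of the image. The key point is that on $\widetilde P$ the intersection $C\cap T$ lands in the fixed divisor class $L|_T$ on $T$, because all curves in $V$ are linearly (not merely algebraically) equivalent: indeed $\Pic(\widetilde P)$ has no positive-dimensional part, so numerical equivalence to $L$ forces $C\in|L'|$ for one of finitely many classes $L'$, and $C\cap T\in|L'|_T|$. Thus the image is contained in $|L|_T|\cap\Sym^{L\cdot T}(T)_m$. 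Here I would use that $|L|_T|$ is a fixed linear series on the elliptic curve $T$ of degree $L\cdot T$, cut out by restriction, and the locus $\Sym^{L\cdot T}(T)_m$ of divisors with a point of multiplicity $\ge m$ inside $\Sym^{L\cdot T}(T)$ has codimension $m-1$. The claim that the image has \emph{codimension one in $|L|_T|$} should come from the analogue of the dimension count: a general member of a component has a single point of multiplicity exactly $m$ and otherwise simple intersections, so the tangency condition at an unspecified point imposes $m-1$ conditions but buys back the freedom of moving $p$, netting codimension one inside the full linear system $|L|_T|$.

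The dimension formula $\dim(V)=L\cdot T-m$ would then be assembled as follows. By Proposition \ref{prop:TD}(ii) (applicable since $T\equiv -K_{\widetilde P}$ by $T\sim-K_{\widetilde P}$), together with semicontinuity, every component has dimension at least $g+L\cdot T-m = L\cdot T-m$ with $g=0$. For the reverse inequality I would use finiteness of the restriction map together with the computed dimension of the image: since the map is finite, $\dim(V)=\dim(\text{image})$, and the image $|L|_T|\cap\Sym^{L\cdot T}(T)_m$ has dimension $\dim|L|_T|-1=(L\cdot T-1)-1$. Finiteness itself follows as in Lemma \ref{lemma:R3}: an infinite fiber would produce a member of $V$ through an extra general point of $T$, contradicting (i). Combining the bounds yields equality $\dim(V)=L\cdot T-m$.

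The main obstacle I anticipate is the precise determination of the image of the restriction map and the verification that it has codimension one in $|L|_T|$, rather than merely bounding the dimension. The subtlety is that, unlike on $\widetilde R$ where the surjectivity onto all of $\Sym^{L\cdot T}(T)_m$ holds because the curves vary in a positive-dimensional family of linear systems, here the linear system $|L|$ is rigid, so $C\cap T$ is constrained to the fixed series $|L|_T|$. I would need to check carefully that $\dim|L|_T|=L\cdot T-1$ (i.e.\ that $L|_T$ is nonspecial of the expected dimension on the elliptic curve $T$, which holds since $\deg(L|_T)=L\cdot T>0$), and that the tangency locus meets this series in the expected codimension one; establishing this transversality/expected-codimension statement is where the real work lies, and I would lean on the deformation-theoretic analysis of \cite{CH} cited in Proposition \ref{prop:TD} to control it.
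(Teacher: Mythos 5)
Your overall strategy coincides with the paper's: part (i) via the observation that members of $\overline{V}_{0,m}(\widetilde{P},T,L)$ are limits of rational curves, hence have only rational components, so the elliptic curve $T$ cannot be one of them; and part (ii) via well-definedness and finiteness exactly as in Lemma \ref{lemma:R3}(ii), containment of the image in $|L|_T|\cap\Sym^{L\cdot T}(T)_m$ (which you justify correctly by the discreteness of $\Pic(\widetilde{P})$, so that algebraic equivalence pins down the restricted linear series), and a squeeze between the lower bound $\dim(V)\geq L\cdot T-m$ coming from Proposition \ref{prop:TD}(ii) plus semicontinuity and the dimension of the candidate image. No transversality input from \cite{CH} is needed beyond this: once the image, closed because the map is finite and $V$ projective, sits inside the intersection and has at least its dimension, it must fill it up.

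However, there is a genuine numerical error where you compute that dimension. You read ``codimension one'' as codimension one \emph{inside $|L|_T|$} and accordingly set the image's dimension equal to $\dim|L|_T|-1=(L\cdot T-1)-1=L\cdot T-2$. This is wrong: the correct count is that $|L|_T|\cap\Sym^{L\cdot T}(T)_m$ has codimension one inside $\Sym^{L\cdot T}(T)_m$, equivalently codimension $m-1$ (not $1$) inside $|L|_T|$. Concretely, it fibers over the moving point $p\in T$ with fibers the linear systems $|L|_T(-mp)|\cong\PP^{L\cdot T-m-1}$, so its dimension is $L\cdot T-m$. Your heuristic also misfires: multiplicity $m$ at a fixed point imposes $m$ conditions on $|L|_T|$, and letting $p$ move buys back one, netting $m-1$ conditions, so ``codimension one in $|L|_T|$'' would only come out for $m=2$. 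With your value $L\cdot T-2$ the squeeze collapses: the upper bound $L\cdot T-2$ and the lower bound $L\cdot T-m$ agree only when $m=2$, and for $m>2$ they are even contradictory, so your concluding equality $\dim(V)=L\cdot T-m$ does not follow as written. Replacing $L\cdot T-2$ by the correct $L\cdot T-m$ repairs the argument and recovers exactly the paper's proof.
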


\begin{proof}
  Since the members of $\overline{V}_{0,m}(\widetilde{P},T,L)$ are limits of rational curves, none of them can contain $T$ as a component, which proves (i). As in the proof of Lemma \ref{lemma:R3}(ii), the restriction map is everywhere defined and finite. Its image lies in $|L|_T|\cap \Sym^{L \cdot T}(T)_m$. Since, by Proposition \ref{prop:TD}(ii) and semicontinuity,
  $\dim (V)\geq L \cdot T-m$, which equals $\dim \left(|L|_T| \cap \Sym^{L \cdot T}(T)_m\right)$, the latter is in fact the image. This proves (ii).
\end{proof}

The next result is about the relative version
$\nu^{\langle n\rangle}_{g,m}: \overline{\V^{\langle n\rangle}_{g,m}\left(P,T, L\right)} \longrightarrow
T^n$
of the logarithmic Severi variety
$\overline{V}_{0,m}(\widetilde{P},T, L)$ considered in
Lemma~\ref{lemma:P3} above
(see subsection \ref{s:logS-family}).

\begin{lemma} \label{lemma:P3-family}
(i) Assume that $n>0$ and $L$ is a relative line bundle that is positive on the $i$-th exceptional divisor. Fix a
point $(y_1,\ldots,y_{i-1},y_{i+1}, \ldots,y_n) \in T^{n-1}$. Let $\V$ be any component of 
\[ \{(\nu^{\langle n\rangle}_{g,m})^{-1} (y_1,\ldots,y_{i-1},p,y_{i+1}, \ldots,y_n),\ p \in T\}. \]
Then the restriction map
\[\V \longrightarrow \Sym^{L \cdot T}(T)_m \]
is finite and surjective.

(ii) Assume furthermore that $n \geq 2$ and $L$ is positive, with two different values, on the $i$-th and $j$-th exceptional divisor, $i < j$. Fix any
linear series  $\mathfrak{g}$ of type $g^1_2$ on $T$ and
any point $(y_1,\ldots,y_{i-1},y_{i+1}, \ldots,y_{j-1},y_{j+1}, \ldots, y_n) \in T^{n-2}$.
Let $\V$ be any component of the subset
\begin{eqnarray*} \{(\nu^{\langle n\rangle}_{g,m})^{-1} (y_1,\ldots,y_{i-1},p,y_{i+1}, \ldots,y_{j-1},q,y_{j+1}, \ldots, y_n) \; | p+q \in \mathfrak{g}\}.
\end{eqnarray*}
Then the restriction map
\[\V \longrightarrow \Sym^{L \cdot T}(T)_m \]
is finite and surjective.
\end{lemma}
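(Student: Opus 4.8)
The plan is to reduce both parts to the way the class of $L|_T$ moves in $\Pic(T)$ as the marked point(s) vary. Since $\widetilde P$ is rational, $\Pic^0(\widetilde P)=0$, so any curve $C$ numerically equivalent to $L$ in fact satisfies $\O(C)\cong L$; hence $C\cap T$ is a divisor in the single linear system $|L|_T|$, and its class in $\Pic^{L\cdot T}(T)$ equals $[L|_T]$. By Lemma~\ref{lemma:P3}(ii) the fibrewise restriction image is exactly $|L|_T|\cap\Sym^{L\cdot T}(T)_m$, of codimension one in $\Sym^{L\cdot T}(T)_m$. The whole point is that letting the blow-up point(s) move drags $[L|_T]$ across \emph{all} of $\Pic^{L\cdot T}(T)$, so that the union of these codimension-one images covers $\Sym^{L\cdot T}(T)_m$.

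First I would make this precise. Writing $L\equiv d\ell-\sum_k a_k\e_k$ with $a_k=L\cdot\e_k$, one has $L|_T\sim d(\ell|_T)-\sum_k a_k y_k$, where $y_k=\e_k|_T$. Composing the restriction map $r\colon\V\to\Sym^{L\cdot T}(T)_m$ with the Abel--Jacobi map $\lambda\colon\Sym^{L\cdot T}(T)_m\to\Pic^{L\cdot T}(T)$ yields a map that factors through the parameter base $B$ of moving data via a morphism $\mu\colon B\to\Pic^{L\cdot T}(T)$. In case (i), $\mu(p)=\text{(const)}-a_i\,p$, a translate of multiplication by $a_i$, which is finite and surjective precisely because $a_i=L\cdot\e_i>0$. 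In case (ii), the constraint $p+q\in\mathfrak g$ forces $q=c\ominus p$ for a fixed $c$, so the two contributions combine into $\mu(p)=\text{(const)}-(a_i-a_j)\,p$, again finite and surjective, this time exactly because $a_i\neq a_j$. This is where the hypotheses enter and, in my view, where the real content of the statement lies.

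Next I would prove finiteness of $r$. As $\V$ is closed in the relative Hilbert scheme, which is proper over the complete base, $\V$ is complete and $r$ is proper; it therefore suffices to check quasi-finiteness. Given a target $Z$, the identity $\lambda\circ r=\mu\circ q$ (with $q\colon\V\to B$) forces the moving data to lie in the finite set $\mu^{-1}([Z])$, which pins down the blow-up point(s) up to finitely many choices; Lemma~\ref{lemma:P3}(ii) then leaves only finitely many curves $C$ with $C\cap T=Z$. Hence $r$ is finite.

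Finally, the dimension count and surjectivity. Since $\overline{\V^{[n]}_{0,m}(P,T,L)}$ is the closure of the relative family over $T^n$ whose general fibre is pure of dimension $L\cdot T-m$ by Lemma~\ref{lemma:P3}(ii), its components that dominate $T^n$ have dimension $n+(L\cdot T-m)$; restricting such a component over $B$, which has codimension $n-1$ in $T^n$ in both cases, shows that the corresponding components of $\nu^{-1}(B)$ have dimension at least $L\cdot T-m+1$. Combined with finiteness, which gives $\dim\V=\dim r(\V)\le\dim\Sym^{L\cdot T}(T)_m=L\cdot T-m+1$, this forces equality; as $r$ is finite its image is closed, and $\Sym^{L\cdot T}(T)_m$ is irreducible (it is the image of $T\times\Sym^{L\cdot T-m}(T)$ under $(p,D)\mapsto mp+D$) of the same dimension, so $r$ is surjective. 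The hard part will be the very last subtlety: one must exclude \emph{vertical} components of $\nu^{-1}(B)$ lying in a single fibre of $\nu$, for such a component would surject only onto the codimension-one locus $\lambda^{-1}(\mu(b_0))$ and defeat surjectivity; this is exactly where one needs that the components of the closure $\overline{\V^{[n]}_{0,m}(P,T,L)}$ genuinely dominate $T^n$.
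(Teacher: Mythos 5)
Your proposal follows essentially the same route as the paper's proof. The real content in both is the computation of how $[L|_T]$ moves in $\Pic^{L\cdot T}(T)$ as the blown-up point(s) vary: $\mu(p)=c-a_i\,p$ in case (i), and $\mu(p)=c'-(a_i-a_j)\,p$ in case (ii). Your group-law elimination $q=c\ominus p$ is just a cleaner packaging of the paper's argument, which instead assumes $L'|_T-a_ip-a_jq \sim L'|_T-a_ip'-a_jq'$ with $p+q\sim p'+q'$ and deduces $(a_i-a_j)p\sim (a_i-a_j)p'$; both versions hinge on $a_i>0$, resp.\ $a_i\neq a_j$, exactly where you place the weight. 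You are in fact more explicit than the paper about the remaining mechanics: the paper compresses finiteness and surjectivity into the single sentence ``this together with Lemma~\ref{lemma:P3}(ii) yields (i)'', whereas you spell out properness plus quasi-finiteness, and the dimension count.

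The one substantive loose end is the ``vertical components'' issue you flag at the end without resolving. You are right that it matters: since the statement concerns \emph{every} component of the preimage of the moving family, a component contained in a single fiber of $\nu$ would map into the codimension-one locus $\lambda^{-1}(\mu(b_0))$ and kill surjectivity, so a complete proof must exclude such components. Be aware, however, that the paper's own proof is silent on this point as well, so this is not a place where you missed an argument the paper supplies. The way to close it is the one you hint at: every component of the relative variety $\V^{[n]}_{0,m}(P,T,L)$ dominates $T^n$, because the deformation-theoretic lower bound underlying Proposition~\ref{prop:TD} holds in the relative setting (every component of the relative space has dimension at least $n+L\cdot T-m$), while Proposition~\ref{prop:TD}(ii) forces every fiber to have dimension exactly $L\cdot T-m$; hence no component can lie over a proper subvariety of $T^n$. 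Granting this, the Krull height bound gives $\dim\V\geq L\cdot T-m+1$ for \emph{every} component $\V$ of the preimage, including those supported in the boundary of the relative Severi variety, and your finiteness-plus-irreducibility argument then yields surjectivity for all of them.
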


\begin{proof}
Assume $L \cdot \e_i>0$ for some $i$. Varying $p$, we obtain a one-dimensional nontrivial family of surfaces $\Bl_{y_1,\ldots,y_{i-1},p,y_{i+1}, \ldots,y_n}(P)$
and a one-dimensional non--constant family of line bundles whose
restrictions to $T$ yield a one-dimensional non--constant family of
line bundles. This together with Lemma \ref{lemma:P3}(ii) yields  (i).

Finally, assume $L \cdot \e_i=a_i>0$ and $L \cdot \e_j=a_j>0$, with $a_i \neq a_j$.   Varying $p+q \in \mathfrak{g}$, we get a one-dimensional nontrivial family of surfaces $\Bl_{y_1,\ldots,y_{i-1},p,y_{i+1}, \ldots,y_{j-1},q,y_{j+1}, \ldots,y_n}(P)$ as above and a one-dimensional family of line bundles, all of the form
$L'-a_i\e_i-a_j\e_j$, with $L'$ fixed (on $\Bl_{y_1,\ldots,y_{i-1},y_{i+1}, \ldots,y_{j-1},y_{j+1}, \ldots,y_n}(P)$) and $\e_i$ and $\e_j$ varying with $p$ and $q$. To prove  (ii), we will prove that the family of restrictions
\[ \left\{L'|_T-a_ip-a_jq\right\}_{p+q \in \mathfrak{g}} \]
to $T$ is non--constant. Assume that
\begin{equation} \label{eq:movl'}
  L'|_T-a_ip-a_jq \sim L'|_T-a_ip'-a_jq' \; \; \mbox{for} \; \; p+q \neq p'+q' \in \mathfrak{g}.
  \end{equation}
This yields $(a_i-a_j) p \sim (a_i-a_j) p'$ and  $(a_i-a_j) q \sim (a_i-a_j) q'$. For fixed $x \in T$, there are only finitely many points $y \in T$ such that $(a_i-a_j) x \sim (a_i-a_j) y$. For general $p+q, p'+q' \in  \mathfrak{g}$, condition \eqref{eq:movl'} is therefore not fulfilled. This finishes the proof of (ii).
\end{proof}

The main existence result of this subsection is the following:

\begin{prop} \label{prop:P2}
  Let $y_1, \ldots, y_n \in T$ be general, $n \leq 8$, and $L$ be big and nef on
  $\widetilde{P}$.
  If  $ 0< m \leq  L \cdot T-3$,
  the variety
  $V^*_{0,m}(\widetilde{P},T,L)$ is nonempty of dimension $T\cdot L-m$. Moreover, its general member intersects any fixed curve on $\widetilde{P}$ different from $T$ transversely.
\end{prop}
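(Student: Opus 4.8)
The plan is to establish nonemptiness and dimension of $V^*_{0,m}(\widetilde{P},T,L)$ by induction on $m$, following the same strategy that produced Proposition~\ref{prop:R2} on $\widetilde R$, but now exploiting the family techniques of Lemma~\ref{lemma:P3-family} to promote the tangency order. First I would address the base case $m=1$: for $L$ big and nef on a blow-up of the plane at $n\le 8$ general points of $T$, the existence of a rational curve in $|L|$ that is smooth at its intersection with $T$ (equivalently, a member of the classical Severi variety of rational curves with nodes off $T$) should follow from standard nonemptiness results for Severi varieties on del Pezzo or weak del Pezzo surfaces, so I would cite the analogue of \cite[Prop. 2.3]{cdgk2} or invoke the fact that $\widetilde P$ is a (possibly weak) del Pezzo of degree $9-n\ge 1$, where rational curves in a big and nef class are plentiful.

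For the inductive step, assume $V_{0,m}(\widetilde{P},T,L)\neq\emptyset$ with $1\le m\le L\cdot T-4$, and aim to produce a member with tangency order $m+1$. By Lemma~\ref{lemma:P3} its general member $C$ meets $T$ in $mp_0+p_1+\cdots+p_l$ with $l=L\cdot T-m$, the $p_i$ distinct. The key idea is to let the blow-up points vary inside the family $\mathcal S^{[n]}\to T^n$: using Lemma~\ref{lemma:P3-family}, I would move one (or two) of the exceptional-divisor centers along $T$ so that the restriction map to $\Sym^{L\cdot T}(T)_m$ remains finite and \emph{surjective}, hence dominant onto a space of divisors with an $m$-fold point. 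Then, exactly as in the proof of Proposition~\ref{prop:R2}, I would pass to a one-parameter family of stable maps $\C\to\widetilde P$ over a base curve $B$, with $f^*T$ containing $mP_0+P_1+\cdots+P_l+Q+W$ where the $P_i,Q$ are sections, $f(Q)=T$, and $W$ is $f$-contracted. The connectedness argument via Stein factorization shows $P_0$ and $Q$ are joined by a contracted divisor in a single fiber $\C_{b_0}$, forcing the specialized curve to satisfy $f_*\C_{b_0}\cap T=(m+1)p_0+p_1+\cdots+p_l$, which yields a member of $V_{0,m+1}(\widetilde{P},T,L)$.

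The genus bookkeeping differs from the elliptic case and is where the argument needs care: since we work with \emph{rational} target curves, the limit $f_*\C_{b_0}$ must be checked to remain irreducible of geometric genus $0$ rather than acquiring extra components or dropping into a different class. Here the genericity of the $p_i$ and of $y_1,\ldots,y_n$, together with $n\le 8$, should rule out the appearance of $(-1)$-curves or fibral components absorbing the tangency, so that $f_*\C_{b_0}=C'$ is again an irreducible rational curve smooth at $p_0$. The dimension count $\dim V^*_{0,m}=L\cdot T-m$ then follows from Lemma~\ref{lemma:P3}(ii), and the condition $m\le L\cdot T-3$ guarantees, via Proposition~\ref{prop:TD}(iii)--(iv), that the general member is nodal, smooth along $T$, and transverse to any fixed curve $G\neq T$.

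The main obstacle I anticipate is the inductive promotion step in the rational setting: unlike the elliptic case, a rational curve cannot be the \emph{image} of a positive-genus stable map, so I must verify that the specialization does not force the geometric genus to jump or the class to split off a tangent component of $T$. Making the family argument genuinely produce a smooth point of the normalizing curve over $p_0$—so that the tangency order increases by exactly one and the curve stays irreducible and rational—is the delicate point, and it is precisely to secure the surjectivity feeding this degeneration that Lemma~\ref{lemma:P3-family}, with its two variants for moving one or two centers with distinct multiplicities, is needed.
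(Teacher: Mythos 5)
Your overall architecture (induction on $m$ plus a stable-map degeneration) is the paper's, but the mechanism you propose for producing the one-parameter family to degenerate --- moving blow-up centers via Lemma~\ref{lemma:P3-family} so as to make the restriction map surjective onto $\Sym^{L\cdot T}(T)_m$ --- is not the paper's, and it has a genuine gap, in fact two. First, Lemma~\ref{lemma:P3-family} requires $L$ to be \emph{positive} on some exceptional divisor. When $L\cdot \e_i=0$ for every $i$ --- e.g.\ $n=0$, so $\widetilde P=\PP^2$, or $L$ a pullback $d\ell$ of $\O_{\PP^2}(d)$, both squarely within the hypotheses of Proposition~\ref{prop:P2} --- moving the centers does not change $L|_T$ at all, the restricted divisors stay inside the fixed codimension-one locus $|L|_T|\cap\Sym^{L\cdot T}(T)_m$, and your key tool is simply unavailable. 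Second, even when some $L\cdot\e_i>0$, in your one-parameter family the \emph{surface} varies together with the curve (so your stable maps have a varying target, an extra technicality you elide by saying ``exactly as in Proposition~\ref{prop:R2}''); the limit curve with $(m+1)$-fold tangency is then produced on the special surface $\Bl_{y_1,\ldots,p(b_0),\ldots,y_n}(P)$ at an uncontrolled parameter $p(b_0)\in T$, not on the general $\widetilde P$. To conclude you would still have to transfer nonemptiness of $V_{0,m+1}$ from this special member of the family back to the general one, and you give no argument for that: Proposition~\ref{prop:TD}(ii) controls fiber dimensions on a \emph{fixed} surface, and a priori the component of the relative Severi variety through your limit point could be entirely contained in the fiber over $p(b_0)$.

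The paper avoids both problems by keeping $\widetilde P$ fixed and using the actual image of the restriction map rather than trying to restore surjectivity: by Lemma~\ref{lemma:P3}(ii) that image is a divisor in $\Sym^{L\cdot T}(T)_m$, so $\dim V_{0,m}=L\cdot T-m$, and fixing only a degree-$(L\cdot T-2)$ divisor $mp_0+p_1+\cdots+p_l$ (with $l=L\cdot T-m-2\geq 2$) leaves a one-dimensional family on $\widetilde P$ itself with \emph{two} moving intersection points $q_1,q_2$. Accordingly the stable-map limit has two sections $Q_1,Q_2$ dominating $T$; the Stein-factorization argument shows $f^{-1}(T)$ has at most two connected components, one containing each $Q_j$, so $P_0$ is joined by contracted curves to one of them, say $Q_2$, forcing the $(m+1)$-tangency at $p_0$ while the transverse point $q_1=f(Q_1)$ survives. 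Reducedness and irreducibility of the limit --- which you leave to unspecified genericity --- are exactly what condition \eqref{eq:noD} together with ampleness of $T$ deliver; for the latter the paper first observes that $n\leq 8$ general points of $T$ may be regarded as general points of $\PP^2$, so $\widetilde P$ is a genuine (not merely weak) del Pezzo surface, which is also the correct setting for the base case $m=1$, proved by quoting \cite{GLS} (your suggested citation \cite{cdgk2} concerns the surface $R$, not the blown-up plane). Your concern about genus jumping, on the other hand, is vacuous: the domain of a genus-zero stable map is a tree of smooth rational curves, so every component of the image is automatically rational; the only real issue is irreducibility, handled as above.
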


\begin{proof}
This is an application of \cite[Cor. 3.11]{CGL};
there are some conditions to check, so we give a proof for
completeness.

 The statements  about dimension and transversal intersection follow from
 Proposition \ref{prop:TD}(ii) and (iii), respectively, once non--emptiness is proved. By Proposition \ref{prop:TD}(iii)--(iv)
 we have that $V^*_{0,m}(\widetilde{P},L) \neq \emptyset$ as soon as  $V_{0,m}(\widetilde{P},T,L) \neq \emptyset$, because of the condition $m \leq  L \cdot T-3$. We therefore have left to prove nonemptiness of
  $V_{0,m}(\widetilde{P},T,L)$.  We will prove this by induction on $m$, as in the proof of Proposition \ref{prop:R2}, again following an idea in the proof of \cite [Thm. 3.10]{CGL}.

  Since $y_1\ldots,y_n \in T$ are general and $n \leq 8$, we may take $y_1,\ldots,y_n$ to be general points of $\PP^2$ and $T$ a general plane cubic containing them. Hence  $\widetilde{P}$ is a Del Pezzo surface, so that $T$ is ample on it. 
It is then well-known,  by \cite[Thms. 3-4]{GLS}, that $V_{0,1}(\widetilde{P},T,L) \neq \emptyset$.

Assume now that we have proved non--emptiness of $V_{0,m}(\widetilde{P},T,L)$ for some $1 \leq m \leq  L \cdot T-4$.  By Lemma \ref{lemma:P3}(ii) its general member $C$ satisfies
  \[ C \cap T=mp_0+p_1+\cdots+p_l+p_{l+1}+p_{l+2}, \; \; l=L \cdot T-m-2 \geq 2,\]
  where $p_0,\ldots,p_{l+2}$ are distinct, and we may take $p_0,\ldots,p_{l+1}$ general on $T$. 
  
  For later purposes, we observe that, 
since there are only finitely many divisor classes $D$ such that $C-D
>0$
(by which we mean that $C-D$ is effective and non-zero),
hence $C\cdot T>D\cdot T$, and for each such class the image of the restriction morphism $|D| \to \Sym^{D \cdot T} (T)$ has codimension one,  
the generality of the points implies that
  \begin{eqnarray}
    \label{eq:noD}
    \mbox{there is no effective divisor $D\neq T$ such that} \\
    \nonumber  C-D>0 \; \; \mbox{and} \; \;
   D \cap T \subset \{p_0,p_1,\ldots,p_l\}.
  \end{eqnarray}

  Set $\d=mp_0+p_1+\cdots+p_{l}$. Then $V_{0,\d}(\widetilde{P},T, L) \neq \emptyset$ and all its components are one-dimensional, by Proposition \ref{prop:TD}(i). The general member in any component intersects $T$ in $mp_0+p_1+\cdots+p_{l}+q_1+q_2$, where the points $q_1,q_2$ vary in the family,  by Proposition \ref{prop:TD}(iii). 
Pick a component $\overline{V}$ of its closure inside the component of the Hilbert scheme of $\widetilde{P}$ containing $|L|$. After a finite base change, we find a smooth projective curve $B$, a surjection $B \to \overline{V}$ and a family
  \[ \xymatrix{ \C \ar[r]^{f}  \ar[d]_{g} & \widetilde{P} \\
      B} \]
  of stable maps of genus zero such that, setting $\C_b:=g^*b$ for any $b \in B$, the curve
$f_*\C_b$ is a member of $\overline{V}$, and such that 
  \[ f^*T=mP_0+P_1+\cdots+P_l+Q_1+Q_2+W,\]
  where
  \begin{itemize}
  \item[(I)] $P_i$ and $Q_j$ are sections of $g$, for $i \in \{0,\ldots,l\}$, $j \in \{1,2\}$,
  \item[(II)] $f(P_i)=p_i$, for $i \in \{0,\ldots,l\}$,
\item[(III)] $f(Q_j)=T$, for $j \in \{1,2\}$,
\item[(IV)] $g_*W=0$,
  \item[(V)] $f_*W=0$;
  \end{itemize}
  the latter property follows from the fact that no member of
  $\overline{V}$ contains $T$, by Lemma \ref{lemma:P3}(i).
% Property (III) implies as in the proof of Proposition~\ref{prop:R2}
% that $f^{-1}(T)$ is either connected, or has two connected components
% containing $Q_1$ and $Q_2$ respectively.
Since $T$ is ample, $f^*T$ is big and nef, hence its support
$f^{-1}(T)$ is connected as a consequence of Kawamata--Viehweg
vanishing. 
Therefore $P_0$ and $Q_2$ are connected by a chain $W' \subset W$ such that
$W' \subset \C_{b_0}$ for some $b_0 \in B$, and $f(W')=p_0$. Thus, 
  \begin{equation}\label{eq:pipi2} f_* \C_{b_0} \cap T=(m+1)p_0+p_1+\cdots+p_l+q_1, \; \; q_1=f(Q_1), \; \; l \geq 2.\end{equation} 
  Since $T$ is ample, all components of $f_* \C_{b_0}$ intersect
  $T$. By \eqref{eq:noD} and \eqref{eq:pipi2}, $f_* \C_{b_0}$ must be
  reduced and irreducible, say $ f_* \C_{b_0}=C$, an irreducible
  rational curve. Since $\C_{b_0}$ has arithmetic genus $0$, it consists of a tree of  smooth rational curves, with one component $\widetilde{C}$ such that $f(\widetilde{C})=C$, and the other components contracted by $f$. Therefore, $f^{-1}(p_0) \cap \widetilde{C}=(W'+P_0) \cap \widetilde{C}$ is a single (smooth) point of $\widetilde{C}$. It follows that
  $[C] \in V_{0,(m+1)p_0+p_1+\cdots+p_{l}}(\widetilde{P},T, L)$, which implies  $[C] \in V_{0,m+1}(\widetilde{P},T,L)$.  \end{proof}

\section{Deforming to rigid elliptic curves} \label{sec:prepar}

As mentioned in the introduction, to prove Theorem \ref{thm:main} it will suffice  by \cite[Cor. 1]{indam} to prove that 
$V_{|L|,g-1}(S)$ has a $0$-dimensional component. We will call any element of 
such a $0$-dimensional component a {\it rigid nodal elliptic curve}.
We will prove the existence of such a curve on a general $(S,L)$ in any component of $\E_g \setminus \E_g[2]$ by degeneration, 
using Theorem \ref{thm:deform}, constructing suitable curves on limit
surfaces in $\mathcal D^*$ that will deform to rigid curves in $V_{|L|,g-1}(S)$.
In this section we will identify numerical conditions on limit line bundles under which deformations to such curves can be achieved.
The general strategy of proof is given in the following:

\begin{prop} \label{prop:GS}
  Let $X=\widetilde{R} \cup_T \widetilde{P}$ be a general member of a component of $\D^*$ and $Y=C \cup_T D$ a curve on $X$, with $C \subset \widetilde{R}$ and $D \subset \widetilde{P}$, having the following properties: there are distinct points $x,p_1,\ldots,p_k,q_1,\ldots,q_l$ on $T$, for nonnegative integers $k,l$, and a positive integer $m$ such that
  \begin{itemize}
  \item $C=C_0+C_1+ \cdots+ C_l$ is nodal, with  $[C_0] \in V^{**}_{1,m}(\widetilde{R},T, C_0)$  and 
  $C_i$ a $(-1)$-curve, $i \in \{1,\ldots,l\}$,
  \item $C_0 \cap T=mx+p_1+\cdots+ p_k$,
  \item $C_0$ is odd, 
  \item $C_i \cap T=\{q_i\}$, $i \in \{1,\ldots,l\}$, 
    \end{itemize}
    and
\begin{itemize}
\item $D=D_0+D_1+ \cdots+ D_k$ is nodal, with
$[D_0] \in V^{*}_{0,m}(\widetilde{P},T, D_0)$  and
  $D_i$  a $(-1)$-curve, $i \in \{1,\ldots,k\}$,
  \item $D_0 \cap T=mx+q_1+\cdots+ q_l$,
   \item $D_i \cap T=\{p_i\}$, $i \in \{1,\ldots,k\}$.   
    \end{itemize}
Then $Y$ deforms to an irreducible rigid nodal elliptic curve on the general deformation $S$ of $X$.
\end{prop}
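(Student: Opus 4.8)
The plan is to realize the curve $Y=C\cup_T D$ as the central fiber of a family of curves living inside the family $\pi:\mathfrak{X}\to\DD$ provided by Theorem \ref{thm:deform}, and to show this family has a one-parameter total space whose nearby fibers are irreducible nodal elliptic curves on the general Enriques surface $S=S_t$, which are moreover rigid. The first task is to verify that $Y$ is a genuine limit, i.e.\ that the line bundle $\O_X(Y)=(\O_{\widetilde R}(C),\O_{\widetilde P}(D))$ is a Cartier divisor on $X$: this forces the matching condition $\O_{\widetilde R}(C)|_T\cong\O_{\widetilde P}(D)|_T$, which is exactly guaranteed by the hypotheses $C\cap T=mx+p_1+\cdots+p_k$ (with the $(-1)$-curves $C_i$ meeting $T$ at the $q_i$) and $D\cap T=mx+q_1+\cdots+q_l$ (with the $D_i$ meeting $T$ at the $p_i$), so that $Y\cap T=mx+\sum p_i+\sum q_j$ is the \emph{same} divisor computed from either side. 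I would then compute, via the genus formula for curves on the semistable surface $X$ and the fact that $T\equiv -K_{\widetilde R}\equiv -K_{\widetilde P}$, that $p_a(Y)=g$ where $g$ is the sectional genus of the limit polarization; the two components $C_0$ (genus $1$) and $D_0$ (genus $0$) glued at the single high-contact point $x$, together with the $(-1)$-curves contributing no genus, should yield arithmetic genus $1$ for the \emph{geometric} limit after the $(-1)$-curves are accounted for, and this is what deforms to an elliptic curve.

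Next I would carry out the smoothing argument. The key point is that $Y$ is a curve on the central fiber $X=S_0$ and we want to smooth it simultaneously with the surface. Here I expect to invoke a deformation-theoretic / local analytic model at the node $x\in T$ where $C_0$ and $D_0$ meet with high tangency $m$ to $T$: the surface $X$ is smoothed along $T$ by $\pi$, and one must check that the obstruction to deforming $Y$ sideways together with the surface vanishes, or rather that the \emph{equisingular} deformations one wants (keeping the nodes, letting the two branches glue into a smooth point of an elliptic curve) are unobstructed. The high-order contact condition $m$ is precisely what allows the two tangent branches on the two components to merge into a single smooth branch on the smoothing $S_t$; I would argue that the contact order matches the local multiplicity of the smoothing parameter of the surface, so that $C_0\cup_x D_0$ deforms to a smooth-at-$x$ irreducible curve, while the transverse $(-1)$-curves $C_i,D_i$, meeting $T$ at distinct simple points $q_i,p_i$, contract or glue to contribute nodes but no extra genus. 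The standard reference for such logarithmic/relative smoothings over a semistable degeneration would be the tangency-transversality package encoded in Proposition \ref{prop:TD} and in the families of subsection \ref{s:logS-family}, applied on each side.

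The heart of the matter — and the step I expect to be the main obstacle — is proving \emph{rigidity} of the resulting curve $C_t\subset S_t$, i.e.\ that it moves in a $0$-dimensional family, equivalently that $h^0(S_t,\O_{S_t}(C_t)\otimes\I_{N})-1=0$ where $N$ is its node scheme. This is where the conditions built into $V^{**}_{1,m}(\widetilde R,T,C_0)$ (Definition \ref{def:dustelle}) and the genericity of the configuration are designed to pay off. The strategy is a dimension count relating deformations of $C_t$ on $S_t$ to deformations of the pair $(C,D)$ on $X$ that preserve the matching along $T$: by the exact sequence of Theorem \ref{thm:deform}, $\Pic(S_t)$ is $\Pic(X)$ modulo $\ZZ[\xi]$, so a deformation of $C_t$ lifts to a deformation of $Y$ on $X$ whose two halves continue to meet $T$ in the \emph{same} point-divisor. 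I would argue that any such simultaneous deformation is forced, by Lemma \ref{lemma:R3}(iii)/Definition \ref{def:dustelle} on the $\widetilde R$-side, to be constant: the condition that $|\O_{\widetilde R}(C_0)\otimes\I_{N\cup Z}|$ consist of $C_0$ alone (for $Z$ a length-$(C_0\cdot T-1)$ subscheme of $C_0\cap T$) says precisely that $C_0$ cannot move while keeping $C_0\cap T$ fixed to high order, and the matching to the $\widetilde P$-side via the shared point $x$ propagates this rigidity across the gluing. Combining the rigidity on both components with the $0$-dimensionality of the relevant logarithmic Severi loci (Propositions \ref{prop:R2} and \ref{prop:P2} give pure dimension $C_0\cdot T-m+1$ and $D_0\cdot T-m$ respectively, whose sum under the $m$-fold tangency constraint should collapse to $g-1-(g-1)=0$ after imposing the $g-1$ nodes), I would conclude that the total space of the smoothing meets $V_{|L|,g-1}(S_t)$ in an isolated point, giving the desired rigid nodal elliptic curve and completing the proof.
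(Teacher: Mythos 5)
There is a genuine gap at the heart of your proposal: the local analysis at the point $x$. The curve $Y$ has an $m$-tacnode at $x$ (two smooth branches, one on each component, with mutual contact of order $m$, since both have contact of order $m$ with $T$), a singularity of $\delta$-invariant $m$. You claim that under the smoothing of the surface these two branches ``merge into a single smooth branch,'' i.e.\ that the tacnode smooths completely while the nodes off $T$ are preserved. This cannot be what happens: writing $\gamma_0,\gamma,\delta_0,\delta$ for the numbers of nodes of $C_0$, of intersection points among components of $C$, of nodes of $D_0$, and of intersection points among components of $D$, one computes $p_a(Y)=\gamma_0+\gamma+\delta_0+\delta+m$ (this is \eqref{eq:gan} in the paper), so a deformation keeping those $\gamma_0+\gamma+\delta_0+\delta$ nodes and becoming smooth at $x$ would have geometric genus $m$, not $1$; for $m\geq 2$ this contradicts the very conclusion you are aiming for. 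The correct statement---and the technical core of the paper's proof---is that the $m$-tacnode deforms to exactly $m-1$ nodes, while the nodes off $T$ are preserved and the nodes on $T$ smooth; this is obtained from the tacnode-deformation machinery of \cite[Thm.~3.3, Cor.~3.12]{gk}, whose hypotheses are not automatic: one must prove that imposing an $(m-1)$-tacnode at $x$ and passage through all nodes off $T$ cuts out a sublinear system $\mathfrak{D}\subset|Y|$ of the maximal possible codimension $m-1+\gamma_0+\gamma+\delta_0+\delta=\dim|Y|$, i.e.\ that $\mathfrak{D}=\{Y\}$ (statement \eqref{eq:pergk} of the paper). Your proposal contains no substitute for this step; ``the contact order matches the local multiplicity of the smoothing parameter'' is not an argument, and the picture it suggests is inconsistent with the genus bookkeeping, which you never carry out.

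Your rigidity discussion has the right flavor but is incomplete on the $\widetilde{P}$ side. The uniqueness statement \eqref{eq:pergk} does double duty in the paper: it both enables the smoothing via \cite{gk} and yields rigidity (it says the equisingular deformation locus of $Y$ in $\mathfrak{X}$ is zero-dimensional). It is proved by two different arguments on the two components: on $\widetilde{R}$ one uses precisely the $V^{**}$ condition of Definition \ref{def:dustelle}, as you say, after first noting that any competitor $A\cup_T B$ must contain all the $(-1)$-curves (because it passes through their intersection points with $C_0$, resp.\ $D_0$) and that Cartierness forces $A_0$ to pass through $p_1,\dots,p_k$; but on $\widetilde{P}$ no such uniqueness is built into $V^{*}_{0,m}$, and a separate intersection-theoretic argument is needed: if $B_0\neq D_0$, then $D_0^2=B_0\cdot D_0\geq (D_0\cdot T-1)+2p_a(D_0)>D_0^2$, a contradiction. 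Asserting that the $\widetilde{R}$-side rigidity ``propagates across the gluing'' papers over exactly this missing argument; and your closing dimension count ($g-1-(g-1)=0$) does not parse. Finally, you also omit the (short) argument that $Y_t$ is irreducible, namely that no proper subcurve of $Y$ is Cartier on $X$.
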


\begin{proof}
Here is a picture of what $Y$ looks like: 
\begin{center}
\begin{tikzpicture}[scale=0.3]

\coordinate (x) at (0,0);
\coordinate (p1) at (0,2);
\coordinate (p2) at (0,4);
\coordinate (p3) at (0,6);
\coordinate (pk) at (0,10);
\coordinate (q1) at (0,-2);
\coordinate (q2) at (0,-4);
\coordinate (q3) at (0,-6);
\coordinate (ql) at (0,-10);

\draw[green,dashed,thick] (0,7) -- (0,9);
\draw[green,dashed,thick] (0,-9) -- (0,-7);
\draw[green,thick] (0,-7) -- (0,7);
\draw[green,thick] (0,9) -- (0,13) node[above right] {$T$};
\draw[green,thick] (0,-12) -- (0,-9);

\draw[blue,thick] (-1,2) -- (8,2) node[right] {$D_1$};
\draw[blue,thick] (-1,4) -- (8,4) node[right] {$D_2$};
\draw[blue,thick] (-1,6) -- (8,6) node[right] {$D_3$};
\draw[blue,thick] (-1,10) -- (8,10) node[right] {$D_k$};

\draw[blue,dotted] (8.5,7) -- (8.5,9);

\draw[red,thick] (1,-2) -- (-8,-2) node[left] {$C_1$};
\draw[red,thick] (1,-4) -- (-8,-4) node[left] {$C_2$};
\draw[red,thick] (1,-6) -- (-8,-6) node[left] {$C_3$};
\draw[red,thick] (1,-10) -- (-8,-10) node[left] {$C_l$};
\draw[red,dotted] (-8.5,-7) -- (-8.5,-9);

\draw[smooth,red,thick]
(-2,11) [out=60,in=115]  to (pk) to[out=-65,in=110] (1.2,9)
(1.2,7) to[out=-135,in=45] (p3) to[out=-135,in=90] (-1,5) to[out=-90,in=145] (p2) to[out=-35,in=90] (1,3) to[out=-90,in=45] (p1) to[out=-135,in=65]
(-0.8,1.1)  to[out=-115,in=90]
(0,0.2) to[out=-90,in=90]
(x) to[out=-90,in=170] (-2.5,-3)
to[out=-10,in=30] (-2,-5) to[out=-150,in=30] (-3,-3.5) to[out=-150,in=30] (-4,-7) (-5,-9) to[out=-150,in=30] (-6,-9.2) to[out=210,in=0] (-5,-12) to[out=180,in=-90] (-6,-11.5) to[out=90,in=180] (-3,-11) node[below=0.1cm] {$C_0$}
;

\draw[smooth,red,dashed, thick]
(-4,-7) to[out=-150,in=30] (-5,-9);

\draw[smooth,red,dashed, thick]
(1.2,9) to[out=-70,in=45] (1.2,7);

\draw[smooth,blue,thick]
(2,-11)[out=160,in=-25]  to (ql) to[out=155,in=-50] (-1.2,-9)
(-1.2,-7) to[out=45,in=-155] (q3) to[out=25,in=-90] (1,-5) to[out=90,in=-35] (q2) to[out=145,in=-90] (-1,-3) to[out=90,in=-155] (q1) to[out=25,in=-45]
(1,-1)  to[out=135,in=-90] (x)  to[out=90,in=-170]
(1.1,1.45) to[out=10,in=-170]
(2.5,2.5) to[out=10,in=-60] (3,1.5) to[out=120,in=-60] (6,7) (6,9) to[out=30,in=180] (6.5,11.5)
to[out=0,in=0] (7.5,10.5) to[out=180,in=-40] (7,10.5) to[out=140,in=-170] (7,12) to[out=10,in=60] (8.5,11.5) to[out=-120,in=0] (7.5,13) node[right=0.25cm] {$D_0$};

\draw[smooth,blue,dashed,thick]
(-1.2,-9)
to[out=130,in=-90] (-1.5,-8) to[out=90,in=-135] (-1.2,-7);

\draw[smooth,blue,dashed,thick]
(6,7) to[out=120,in=-150] (6,9); 

\filldraw (x) circle (0.15);
\filldraw  (q1) circle (0.15);
\filldraw (q2) circle (0.15);
\filldraw (q3) circle (0.15);
\filldraw (ql) circle (0.15);
\filldraw (p1) circle (0.15);
\filldraw (p2) circle (0.15);
\filldraw (p3) circle (0.15);
\filldraw (pk) circle (0.15);

\draw (0,0) node[right] {$x$};
\draw (-0.1,-1.8) node[below right] {$q_1$};
\draw (0.4,-3.8) node[below left] {$q_2$};
\draw (-0.1,-5.8) node[below right] {$q_3$};
\draw (-0.1,-10.1) node[above right] {$q_l$};

\draw (0.2,1.8) node[above left] {$p_1$};
\draw (0,3.8) node[above right] {$p_2$};
\draw (0.2,5.9) node[above left] {$p_3$};
\draw (-0.1,10) node[above right] {$p_k$};

\draw[red] (-5,4) node {\huge$\widetilde{R}$};
\draw[blue] (5,-4) node {\huge$\widetilde{P}$};
\end{tikzpicture}

\end{center}
We note that $Y$ is Cartier,   has an $m$--tacnode at $x$ and is otherwise nodal. Moreover, an easy computation as in \cite[p.~119]{kn-JMPA} shows that
\begin{equation}
  \label{eq:inters}
  \dim (|Y|)=\frac{1}{2}Y^2=p_a(Y)-1.
\end{equation}

 We define 
  \begin{itemize}
  \item $N_{C_0}$ the scheme of nodes of $C_0$ and $\gamma_0$ its degree, 
  \item $N_{C}$ the scheme of 
intersection points between components of $C$ and
    $\gamma$ its degree,
    \item $N_{D_0}$ the scheme of nodes of $D_0$ and $\delta_0$ its degree,
    \item $N_{D}$ the scheme of 
intersection points between components of $D$ and
$\delta$ its degree.
      \end{itemize}
Then $N_{C_0} \cup N_C \cup N_{D_0}\cup N_D$ is the set of nodes of $Y$ off $T$.
Since 
\[ \gamma_0=p_a(C_0)-1=\frac{1}{2}C_0 \cdot (C_0+K_{\widetilde{R}})= \frac{1}{2}\left(C_0^2-C_0 \cdot T\right)= \frac{1}{2}\left(C_0^2-m-k\right)
      \]
and, similarly,
      \[
        \delta_0=p_a(D_0)=\frac{1}{2}\left(D_0^2-m-l\right)+1,
      \]
we compute
      \begin{eqnarray}
   \label{eq:gan}     p_a(Y)& = & \frac{1}{2}Y^2+1=\frac{1}{2}\left(C^2+D^2\right)+1=\frac{1}{2}\left(C_0^2-l+2\gamma+D_0^2-k+2\delta\right)+1 \\
\nonumber               & = & \frac{1}{2}\left(C_0^2-m-k\right) +\frac{1}{2}\left(D_0^2-m-l\right) +m+\gamma+\delta+1 \\
 \nonumber        & = & \gamma_0+(\delta_0-1)+m+\gamma+\delta+1 = \gamma_0+\gamma+\delta_0+\delta+m.
      \end{eqnarray}

      Let $\mathfrak{X} \to \DD$  be the deformation of $X$  to a general smooth Enriques surface $S_t$ in Theorem \ref{thm:deform}. Then $Y$ deforms to a Cartier divisor $Y_t$ on $S_t$ by the same theorem. By \eqref{eq:inters} we have $\dim (|Y|)=\dim(|Y_t|)$. Let $\mathfrak{D}$ be the sublinear system of $|Y|$ of curves with an $(m-1)$--tacnode at $x$ and passing through $N_{C_0} \cup N_C \cup N_{D_0}\cup N_D$. 
We claim that
\begin{equation}
  \label{eq:pergk}
  \mbox{$\mathfrak{D}$ consists only of $Y$ itself.}\footnote{ From a deformation-theoretic point of view, \eqref{eq:pergk} implies that the {\it equisingular deformation locus of $Y$ in $\mathfrak{X}$} is smooth and zero-dimensional (cf.\ \cite[Lemma 3.4]{gk}), thus
consisting only of the point $[Y]$.}
\end{equation}
Granting this for the moment, \eqref{eq:inters}-\eqref{eq:pergk} yields that the codimension of $\mathfrak{D}$ is $m-1+\gamma_0+\gamma+\delta_0+\delta$. Thus, the hypotheses of \cite[Thm. 3.3, Cor. 3.12 and proof of Thm. 1.1]{gk} are fulfilled\footnote{We remark that the hypothesis in \cite{gk} that both components of $X$ are regular is not necessary; it suffices that $h^1(\O_X)=0$, which is proved as in \cite[Lemma 3.4]{kn-JMPA}.}
and we conclude that, under the deformation of $X$ to $S_t$, we may deform $Y$   in such a way that  the $m$-tacnode of $Y$ at $x$  deforms  to
      $m-1$ nodes  and  the  $\gamma_0+\gamma+\delta_0+\delta$ nodes of $Y$ in the smooth locus of $X$  are preserved,  whereas the nodes of $Y$ on $T$ automatically smooth. Thus $Y$ deforms to a  nodal  curve $Y_t \subset S_t$ with
  a total of 
 \[ \gamma_0+\gamma+\delta_0+\delta+m-1=p_a(Y)-1=p_a(Y_t)-1 \]
 nodes (using \eqref{eq:gan}). 
 Since one easily sees that no subcurve of $Y$ is Cartier, $Y_t$ is irreducible, whence $Y_t$ is nodal and elliptic, as desired. It is rigid, as $Y$ is rigid on $X$.

 We have left to prove \eqref{eq:pergk}. To this end, let $A \cup_T B \in |Y|$ be a curve with an $(m-1)$--tacnode at $x$ and passing through $N_{C_0} \cup N_C \cup N_{D_0}\cup N_D$, where $A \subset \widetilde{R}$ and  $B \subset \widetilde{P}$. Then both $A$ and $B$ must intersect $T$ in a scheme containing $(m-1)x$ and moreover $N_{C_0} \cup N_C \subset A$ and $N_{D_0} \cup N_D \subset B$.

  The fact that $N_D \subset B$ implies that $B$ must contain all $(-1)$--curves $D_1,\ldots,D_k$. Hence $B =B_0+D_1+\cdots+ D_k$, with $B_0 \sim D_0$. Similarly,
  the fact that $N_C \subset A$ implies that $A$ must contain all $(-1)$--curves $C_1,\ldots,C_k$. Hence $A =A_0+C_1+\cdots +C_k$, with $A_0 \sim C_0$. Since
  $A \cup_T B$ is Cartier, $A_0$ must pass through $p_1,\ldots,p_k$ and
  $B_0$ must pass through $q_1,\ldots,q_l$. Thus
  \[ A_0 \cap T \supset (m-1)x+p_1+\cdots+p_k=:Z_C \; \; \mbox{and} \; \;
    B_0 \cap T \supset (m-1)x+q_1+\cdots+q_l=:Z_D.\]
  Hence $A_0 \in |\O_{\widetilde{R}}(C_0) \* \I_{N_{C_0} \cup Z_C}|$. Since
$\deg(Z_C)=C_0 \cdot T-1$ and
$[C_0] \in V^{**}_{1,m}(\widetilde{R},T, C_0)$, this implies $A_0=C_0$ (recall Definition \ref{def:dustelle}), whence $A=C$.

    Similarly, $B_0 \in |\O_{\widetilde{P}}(D_0) \* \I_{N_{D_0} \cup Z_D}|$, whence, if $B_0 \neq D_0$, we would get 
      \begin{eqnarray*} D_0^2 & = & B_0 \cdot D_0 \geq \deg (Z_D) +2\deg(N_{D_0}) = (D_0 \cdot T-1)+
                                    2p_a(D_0)\\
        & = &
        -D_0 \cdot K_{\widetilde{P}}-1+D_0^2+D_0 \cdot K_{\widetilde{P}}+2 >D_0^2,
        \end{eqnarray*}
        a contradiction. Thus $B_0=D_0$, whence $B=D$. This proves
        \eqref{eq:pergk}. 
\end{proof}

The next two results form the basis for our proof of Theorem \ref{thm:main}. We henceforth assume that $E$ is a {\it general} elliptic curve.

  \begin{prop} \label{prop:metodo1}
    Let $R'$ (respectively, $P'$) be a blow--up of $R$ (resp., $P$) at $s \geq 0$ (resp., $t \geq 1$) general points of $T$. Assume $L'$ (resp., $L''$) is a line bundle (or Cartier divisor) on  $R'$ (resp., $P'$) and $k$ is an integer such that the following conditions are satisfied:
    \begin{itemize}
    \item[(i)]   $s+t-5\leq k \leq \min\{3,s,t-1\}$, 
     \item[(ii)] $L'\cdot T=L''\cdot T$,
    \item[(iii)]  $L' \equiv L'_0+C_1+ \cdots +C_k$, where the $C_i$ are disjoint $(-1)$-curves and $L'_0$ satisfies condition $(\star)$ and is odd,
    \item[(iv)] $L'' \sim L''_0+D_1+ \cdots +D_k$, where the $D_i$ are disjoint $(-1)$-curves and $L''_0$ is big and nef,
    \item[(v)] there are $t-k$ additional $(-1)$-curves on $P'$,
mutually disjoint and disjoint from $D_1,\ldots,D_k$,
 such that $L''$ is positive on at least one of them.
    \end{itemize}

  Then there are blow--ups $\widetilde R\to R'$ and $\widetilde P\to P'$ at distinct point of $T$ such that $\widetilde R\cup_T\widetilde P$ is general in a component of $\mathcal D^*$ and, denoting by $\widetilde L'$ and $\widetilde L''$  the pull--backs of $L'$ and $L''$ to $\widetilde R$ and $\widetilde P$ respectively, there is a line bundle $\widetilde L\in [\widetilde L',\widetilde L'']$ (cf.\ Remark \ref {rem:ingcro10}) such that 
  $(\widetilde R \cup_T \widetilde P, \widetilde L)$ deforms to a smooth  polarized Enriques surface $(S,L)$ with $S$ containing an irreducible, rigid nodal elliptic curve in $|L|$.
\end{prop}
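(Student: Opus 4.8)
The plan is to build the limit curve $Y=C\cup_T D$ promised by Proposition~\ref{prop:GS} on a suitable semistable surface $\widetilde R\cup_T\widetilde P$, and then invoke that proposition directly. The data I have on hand are the surfaces $R'$, $P'$ and the line bundles $L'$, $L''$ satisfying conditions (i)--(v). The first step is to produce the two component curves. On $R'$ I would use the decomposition $L'\equiv L_0'+C_1+\cdots+C_k$ from (iii): the odd divisor $L_0'$ satisfies condition $(\star)$, so Proposition~\ref{prop:R2} applies and, for a suitable $m$ with $0<m\le L_0'\cdot T-3$, produces a curve $C_0$ in $V^{**}_{1,m}(\widetilde R,T,L_0')$ meeting $T$ in $mx+p_1+\cdots+p_k$ at general points. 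On $P'$ I would use (iv), writing $L''\sim L_0''+D_1+\cdots+D_k$ with $L_0''$ big and nef, and apply Proposition~\ref{prop:P2} to obtain $D_0\in V^*_{0,m}(\widetilde P,T,L_0'')$ meeting $T$ in $mx+q_1+\cdots+q_l$ with the \emph{same} tangency point $x$ and the same multiplicity $m$. Here the $(-1)$-curves $C_i$ (resp.\ $D_i$) contribute the points $q_i$ (resp.\ $p_i$), matching the combinatorial shape demanded by Proposition~\ref{prop:GS}.

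The crux is arranging that these two curves meet $T$ \emph{compatibly}: they must share the tangency point $x$ and the multiplicity $m$, and the remaining intersection points must interlock as in the hypotheses of Proposition~\ref{prop:GS} (the $C_i$ hitting $T$ at $q_i$, the $D_i$ at $p_i$, with $C_0\cap T=mx+p_1+\cdots+p_k$ and $D_0\cap T=mx+q_1+\cdots+q_l$), while simultaneously the resulting pair $(\widetilde L',\widetilde L'')$ must glue to a Cartier divisor on $\widetilde R\cup_T\widetilde P$, i.e.\ $\widetilde L'|_T\cong \widetilde L''|_T$. This is exactly where condition (ii), $L'\cdot T=L''\cdot T$, and the freedom to blow up the extra points come in. I would use the relative families of \S\ref{s:logS-family}: by Lemma~\ref{lemma:P3-family} the restriction-to-$T$ map from the relative logarithmic Severi variety, as the blown-up points vary along $T$, is finite and surjective onto $\Sym^{L\cdot T}(T)_m$. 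This surjectivity lets me hit any prescribed divisor on $T$, and in particular lets me force the restrictions $\widetilde L'|_T$ and $\widetilde L''|_T$ to coincide by choosing the positions of the blown-up points (this is what hypothesis (v), providing an exceptional divisor on which $L''$ is positive, is for). Condition (i), the inequality $s+t-5\le k\le\min\{3,s,t-1\}$, is the bookkeeping that guarantees there are enough free points and that the numerical constraints $0<m\le L_0'\cdot T-3$ etc.\ can be met.

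Once the matching is achieved I choose the gluing point $x$ and check that $\widetilde R\cup_T\widetilde P$ is a \emph{general} member of a component of $\mathcal D^*$, so that Theorem~\ref{thm:deform} applies and gives the one-parameter smoothing $\mathfrak X\to\DD$ to a general Enriques surface. By Remark~\ref{rem:ingcro10} the numerical class $[\widetilde L',\widetilde L'']$ contains (exactly two) honest line bundles on $X$; I pick $\widetilde L$ to be one of these. The curve $Y=C\cup_T D$ built above then satisfies all the bullet-point hypotheses of Proposition~\ref{prop:GS} by construction---$C_0$ odd and in $V^{**}_{1,m}$, $D_0$ in $V^*_{0,m}$, matching tangencies at $x$, and the $(-1)$-curves attached correctly---so that proposition produces an irreducible, rigid, nodal elliptic curve in $|\widetilde L|$ on the general deformation $S$. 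Transporting this through the isomorphism $\Pic(X)/\ZZ[\xi]\cong\Pic(S_t)$ of Theorem~\ref{thm:deform} yields the rigid nodal elliptic curve in $|L|$ on $(S,L)$, completing the proof.

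I expect the main obstacle to be the compatibility/gluing step: ensuring that the two separately-constructed logarithmic Severi curves can be made to share the same tangency point and multiplicity while their line bundles restrict to the \emph{same} class on $T$. Verifying that the deformation arguments of Propositions~\ref{prop:R2} and~\ref{prop:P2}, together with the relative surjectivity of Lemma~\ref{lemma:P3-family}, can be run simultaneously---rather than just for each surface in isolation---is the delicate part, and it is precisely what conditions (i) and (v) are engineered to allow.
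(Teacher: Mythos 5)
Your proposal follows the same skeleton as the paper's proof: produce $C_0$ via Proposition \ref{prop:R2}, produce $D_0$ via Proposition \ref{prop:P2}, use Lemma \ref{lemma:R3}(ii) on the $\widetilde R$-side and Lemma \ref{lemma:P3-family}(i) on the $\widetilde P$-side (the latter is indeed where hypothesis (v) enters) to make both intersection divisors with $T$ general and hence matchable, then conclude with Proposition \ref{prop:GS}. However, there is a genuine gap at precisely the step you call ``the combinatorial shape demanded by Proposition \ref{prop:GS}'', and it is not a technicality. To apply Propositions \ref{prop:R2} and \ref{prop:P2} (and to get nodality, cf.\ Proposition \ref{prop:TD}(iv)) you need $m\le L_0'\cdot T-3$, so a general $C_0$ meets $T$ in $mx$ plus $L_0'\cdot T-m\ge 3$ further distinct points, and likewise $D_0$; since Cartier-ness of $C\cup_T D$ forces $C\cap T=D\cap T$ as divisors on $T$, the only workable choice is $m=L_0'\cdot T-3=L_0''\cdot T-3$, giving \emph{exactly three} extra points on each curve. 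Your stated configuration, $C_0\cap T=mx+p_1+\cdots+p_k$ and $D_0\cap T=mx+q_1+\cdots+q_l$ with every extra point covered by one of the $k$ given $(-1)$-curves from (iii)/(iv), is therefore numerically impossible unless $k=3$ (it would force $m=L_0'\cdot T-k>L_0'\cdot T-3$), while the proposition must cover $k=0,1,2$ as well (and these cases are used in \S \ref{sec:proofmain}). For $k<3$, matching the divisors on $T$ forces $C_0$ and $D_0$ to cross $T$ at $3-k$ \emph{common} points $p_i=q_i$ besides $x$, and such common points violate the hypotheses of Proposition \ref{prop:GS}, which requires all the points $x,p_i,q_j$ to be distinct and every extra point of $C_0\cap T$ to be hit by a $(-1)$-curve on the $\widetilde P$-side. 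The paper's resolution, absent from your proposal, is to \emph{blow up} these $3-k$ common points: the strict transform of (say) $D_0$ loses the point, while the new exceptional curve becomes a component of the limit curve $Y$ and is exactly the $(-1)$-curve through $p_i$ that Proposition \ref{prop:GS} demands.

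The same missing mechanism conceals the second gap: actually landing on a general member of $\D^*$. A member of $\D$ has exactly nine blown-up points on $T$, and semistability is the condition \eqref{eq:cond} on their sum. After the construction one has the $s+t$ original general centers plus the $3-k$ common points; the paper completes these with $k+6-s-t$ further points $w_j$, chosen general subject to the nine points summing to a member of $|\N_{T/R}\otimes\N_{T/P}|$. The left inequality in (i), $s+t-5\le k$, says precisely that $k+6-s-t\ge 1$, i.e.\ at least one point remains free, which is what makes the resulting divisor --- hence the surface --- general in its component of $\D^*$. Your proposal asserts ``I \dots check that $\widetilde R\cup_T\widetilde P$ is a general member of a component of $\mathcal D^*$'' but supplies no mechanism; without the count of nine points and at least one free $w_j$, condition \eqref{eq:cond} becomes a nontrivial constraint on points already pinned down by the curves, and neither the generality claim nor the application of Theorem \ref{thm:deform} goes through. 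So while your choice of tools is the right one, the proof as written fails for every $k<3$ and does not establish membership in $\D^*$ in any case.
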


\begin{proof}
Set $m:=L_0'\cdot T-3=L''_0\cdot T-3$ (equality follows from assumptions (ii)--(iv)). By condition (iv) in $(\star)$, we have $m>0$.
The line bundle $L'_0$ satisfies the conditions of Proposition
\ref{prop:R2}. Hence $V^{**}_{1,m}(R',T, L'_0) \neq \emptyset$ and its
general member intersects $C_1,\ldots,C_k$ transversely.
Let $C_0$ be such a general member;
we have $L' \equiv C_0+C_1+ \cdots +C_k$ by assumption (iii).

  The surface $R'$ is a blow--up of $R$ at $s$ general points $y_1,\ldots, y_s$ of $T$, where $k\leq s\leq 4$ by assumption (i). Denoting the exceptional divisor over $y_i$ by $\e_i$, the surface $R'$ contains precisely $s$ additional $(-1)$-curves $\e'_i$,
$i=1,\ldots,s$, such that $\e_i \cdot \e'_i=1$ and each $\e_i + \e'_i$ is a fiber    of the projection $R'\to E$. Set $y'_i:=\e'_i \cap T$. 
Since $C_1, \ldots, C_k$ are disjoint $(-1)$-curves by assumption (iii), we have, after renumbering, that $C_i=\e_i$ or $\e'_i$. We will in the following for simplicity assume that $C_i=\e_i$; the other cases  can be treated in the same way by substituting $y_i$ with $y'_i$ at the appropriate places. Thus we have
\begin{eqnarray*}
  C_0 \cap T & = & mp+p_1+p_2+p_3, \; \; \mbox{for} \; \; p,p_1,p_2,p_3 \in T, \\
C_i \cap T & = & y_i, \; \; i \in \{1,\ldots,k\}.  
\end{eqnarray*}
By Lemma \ref{lemma:R3}(ii) the points $p,p_1,p_2,p_3$ are general on $T$ (even for fixed $y_1,\ldots,y_k$). The points $y_1,\ldots,y_k$ are general on $T$ by the assumption that $R'$ is a blow--up of $R$ at {\it general} points on $T$.

The surface $P'$ is a blow--up of $\PP^2$ at $t$ general points on
$T$, where $k<t\leq 5$ by assumption (i). The line bundle $L''_0$ is
big and nef by assumption (iv), whence by Proposition \ref{prop:P2} we
have $V^*_{0,m}(P',T,L''_0) \neq \emptyset$ and its general member
intersects $D_1,\ldots,D_k$ transversely. Let $D_0$ be such a general
member; we have $L'' \sim D_0+D_1+ \cdots +D_k$ by assumption (iv).

 By assumption (iv)-(v), $D_1,\ldots,D_k$ are disjoint $(-1)$-curves belonging to a set of $t$ disjoint $(-1)$-curves, which we may therefore take as an exceptional set for a blowdown $P' \to \PP^2$ centered at points that we denote by $x_1,\ldots, x_t \in T$. Furthermore, there is an exceptional curve, different from $D_1,\ldots,D_k$, call it  $D_{k+1}$, such that $D_{k+1} \cdot L''_0>0$. We have
\begin{eqnarray*}
  D_0 \cap T & = & mq+q_1+q_2+q_3, \; \; \mbox{for} \; \; q,q_1,q_2,q_3 \in T, \\
D_i \cap T & = & x_i, \; \; i \in \{1,\ldots,k\}.  
\end{eqnarray*}
 Since $D_{k+1} \cdot L''_0>0$, we can apply Lemma \ref{lemma:P3-family}(i), which ensures that, even for fixed $x_1,\ldots, x_k$, by moving $x_{k+1}$, the intersection $D_0 \cap T$ is a general element in $\Sym^{L''_0\cdot T}(T)_m$. Hence the points $q,q_1,q_2,q_3$ are general on $T$.  The points $x_1,\ldots,x_k$ are general on $T$ by the assumption that $P'$ is a blow--up of $P$ at {\it general} points on $T$.

Since the points $p,p_1,p_2,p_3,y_1,\ldots,y_k$ are general on $T$,
and likewise  the points $q,q_1,q_2,q_3,x_1,\ldots,x_k$, we may assume
that our choices of $C_0$ and $D_0$ come  with  the identifications
\begin{eqnarray*}
  & p  =  q \\
  & x_i=p_i, \; \; y_i=q_i \; \; \mbox{for} \; \; 1 \leq i \leq k \quad \text {(recall that $k\leq 3$),}& \\
  & p_i=q_i \; \; \mbox{for} \; \; k+1 \leq i \leq 3 \quad \text {(if $k= 3$, this condition is empty)}, 
\end{eqnarray*}
so that we can glue $R'$ and $P'$ along $T$ in such a way that
 $(C_0+C_1+ \cdots +C_k,D_0+D_1+ \cdots +D_k)$ is Cartier on $R' \cup_T P'$. The following picture shows the case $k=2$: 

\begin{center}
\begin{tikzpicture}[scale=0.3]

\coordinate (x) at (0,0);
\coordinate (p1) at (0,2);
\coordinate (p2) at (0,5);
\coordinate (p3) at (0,8);
\coordinate (q1) at (0,-2);
\coordinate (q2) at (0,-4.5);

\draw[green,thick] (0,-8) -- (0,9.5) node[above right] {$T$};

\draw[blue,thick] (-2,2) -- (8,2) node[right] {$D_1$};
\draw[blue,thick] (-2,5) -- (8,5) node[right] {$D_2$};

\draw[red,thick] (2,-2) -- (-8,-2) node[left] {$C_1$};
\draw[red,thick] (2,-4.5) -- (-8,-4.5) node[left] {$C_2$};

\draw[smooth,red,thick]
(1.2,9) to[out=-135,in=45] (p3) to[out=-135,in=90] (-2,6) to[out=-90,in=145] (p2) to[out=-35,in=90] (1,3) to[out=-90,in=45] (p1) to[out=-135,in=65]
(-0.8,1.1)  to[out=-115,in=90]
(0,0.2) to[out=-90,in=90]
(x) to[out=-90,in=170] (-2.5,-3)
to[out=-10,in=30] (-5,-5) to[out=-150,in=30] (-5,-3.5)
to[out=-150,in=30]
(-6,-6)
to[out=-150,in=90] (-8,-6.2) to[out=-90,in=0] (-7,-8) to[out=180,in=-90] (-8,-7.5) 
 to[out=90,in=-90] (-5,-7) node[below=0.2cm] {$C_0$}
 to[out=90,in=0] (-8,-5)
;

\draw[smooth,blue,thick]
(2,-6)  node[right] {$D_0$} to[out=140,in=-35] (q2) to[out=145,in=-90] (-1,-3) to[out=90,in=-155] (q1) to[out=25,in=-45]
(1,-1)  to[out=135,in=-90] (x)  to[out=90,in=-170]
(2.5,2.5) to[out=10,in=-160] (4,1) to[out=20,in=0] (6,7)
to[out=180,in=180] (5,6)
to[out=0,in=0] (6,8)
to[out=180,in=180] (5,7)
to[out=0,in=0] (4,8.5)
to[out=180,in=-15] (p3)  
to[out=175,in=85] (-2,7.5);

\filldraw (x) circle (0.15);
\filldraw (q1) circle (0.15);
\filldraw (q2) circle (0.15);
\filldraw (p1) circle (0.15);
\filldraw (p2) circle (0.15);
\filldraw (p3) circle (0.15);

\draw (0,0) node  [right] {$p=q$};
\draw (-0.1,-2) node [below right] {$y_1=q_1$};
\draw (0.2,-4.4) node [below left] {$y_2=q_2$};
\draw (0.2,1.8) node [above left] {$p_1=x_1$};
\draw (0,4.9) node  [above right] {$p_2=x_2$};
\draw (0.2,7.8) node  [above left] {$p_3=q_3$};

\draw[red] (-9,4) node {\LARGE$R'$};
\draw[blue] (9,-4) node {\LARGE$P'$};

\end{tikzpicture}

\end{center}

By assumption (i) we have that $s+t \leq k+5 \leq 8$. The set of points
\[ y_1,\ldots,y_s,x_1,\ldots,x_t, (p_{k+1}=q_{k+1}), \ldots, (p_{3}=q_{3})\]
is therefore a set of $s+t+3-k \leq 8$ general points on $T$. Pick now points
$w_1,\ldots, w_{k+6-s-t}$ on $T$, general under the condition that
\[ y_1+\cdots+y_s+x_1+\cdots+x_t+  p_{k+1} + \cdots+ p_{3}+ w_1 + \cdots+ w_{k+6-s-t} \in |\N_{T/R} \* \N_{T/P}|.
\]
Note that in this way we get a general divisor of $|\N_{T/R} \* \N_{T/P}|$.
Then blow up either $R'$ or $P'$ at the points 
\[ (p_{k+1}=q_{k+1}), \ldots, (p_{3}=q_{3}), w_1,\ldots, w_{k+6-s-t}\]
to obtain $\widetilde{R} \cup_T \widetilde{P}$, which is a general member of a component of $\D^*$, containing the inverse image $Y$ of the curve $(C_0+C_1+ \cdots +C_k) \cup_T (D_0+D_1+ \cdots +D_k)$, which satisfies the conditions of Proposition \ref{prop:GS}. The result therefore follows by Proposition \ref{prop:GS}.
(The following picture shows the inverse image of the curve in the previous image in the case $k=2$: an additional $(-1)$-curve appears over $p_3=q_3$.)

\begin{center}
\begin{tikzpicture}[scale=0.3]

\coordinate (x) at (0,0);
\coordinate (p1) at (0,2);
\coordinate (p2) at (0,5);
\coordinate (p3) at (0,8);
\coordinate (q1) at (0,-2);
\coordinate (q2) at (0,-4.5);

\draw[green,thick] (0,-8) -- (0,9.5) node[above right] {$T$};

\draw[blue,thick] (-2,2) -- (8,2) node[right] {$D_1$};
\draw[blue,thick] (-2,5) -- (8,5) node[right] {$D_2$};

\draw[red,thick] (2,-2) -- (-8,-2) node[left] {$C_1$};
\draw[red,thick] (2,-4.5) -- (-8,-4.5) node[left] {$C_2$};

\draw[smooth,red,thick]
(1.2,9) to[out=-135,in=45] (p3) to[out=-135,in=90] (-2,6) to[out=-90,in=145] (p2) to[out=-35,in=90] (1,3) to[out=-90,in=45] (p1) to[out=-135,in=65]
(-0.8,1.1)  to[out=-115,in=90]
(0,0.2) to[out=-90,in=90]
(x) to[out=-90,in=170] (-2.5,-3)
to[out=-10,in=30] (-5,-5) to[out=-150,in=30] (-5,-3.5)
to[out=-150,in=30]
(-6,-6)
to[out=-150,in=90] (-8,-6.2) to[out=-90,in=0] (-7,-8) to[out=180,in=-90] (-8,-7.5) 
 to[out=90,in=-90] (-5,-7) node[below=0.2cm] {$C_0$}
 to[out=90,in=0] (-8,-5)
;

\draw[smooth,blue,thick]
(-2,8) -- (7,8) node[right] {$(-1)$-curve};

\draw[smooth,blue,thick]
(2,-6) node[right] {$D_0$} to[out=140,in=-35] (q2) to[out=145,in=-90] (-1,-3) to[out=90,in=-155] (q1) to[out=25,in=-45]
(1,-1)  to[out=135,in=-90] (x)  to[out=90,in=-170]
(2.5,2.5) to[out=10,in=-160] (4,1) to[out=20,in=0] (6,7)
to[out=180,in=180] (5,6)
to[out=0,in=0] (6,7.5)
to[out=180,in=180] (5,6.8)
to[out=0,in=-90] (4,9);

\filldraw (x) circle (0.15);
\filldraw (q1) circle (0.15);
\filldraw (q2) circle (0.15);
\filldraw (p1) circle (0.15);
\filldraw (p2) circle (0.15);
\filldraw (p3) circle (0.15);

\draw (0,0) node  [right] {$p=q$};
\draw (-0.1,-2) node [below right] {$y_1=q_1$};
\draw (0.2,-4.4) node [below left] {$y_2=q_2$};
\draw (0.2,1.8) node [above left] {$p_1=x_1$};
\draw (0,4.9) node  [above right] {$p_2=x_2$};
\draw (0.2,7.8) node  [above left] {$p_3=q_3$};

\draw[red] (-9,4) node {\LARGE$R'$};
\draw[blue] (9,-4) node {\LARGE$P'$};

\end{tikzpicture}
\end{center}
\end{proof}

\begin{prop} \label{prop:metodo2}
  Let $\widetilde{R}$ (respectively, $\widetilde{P}$) be a blow--up of $R$ (resp., $P$) at $4$ (resp., $5$) general points of $T$. Assume $L'$ (resp., $L''$) is a line bundle (or Cartier divisor) on  $\widetilde{R}$ (resp.,
    $\widetilde{P}$) such that the following conditions are satisfied:
    \begin{itemize} 
\item[(i)] $L' \cdot T=L'' \cdot T $,
    \item[(ii)]  $L' \equiv L'_0+C_1+ C_2 +C_3$, where the $C_i$ are disjoint $(-1)$-curves and $L'_0$ satisfies condition $(\star)$ and is odd,
    \item[(iii)] $L'' \sim L''_0+D_1+ D_2 +D_3$, where the $D_i$ are disjoint $(-1)$-curves and $L''_0$ is big and nef,
    \item[(iv)] there are two additional  $(-1)$-curves $D_4,D_5$ on $\widetilde{P}$,
mutually disjoint and disjoint from $D_1,D_2,D_3$,
satisfying $L''\cdot D_4 \neq L'' \cdot D_5$.
    \end{itemize}

    Then there exist $\overline {R}$ (respectively, $\overline {P}$) a blow--up of $R$ (resp., $P$) at $4$ (resp., $5$) points of $T$ and a line bundle $\overline L'$  on $\overline R$ (resp. $\overline L''$ on $\overline P$) such that:
    \begin{itemize}
    \item [(a)] the pair $(\overline R,\overline L')$ (resp. $(\overline P,\overline L'')$) is a deformation of $(\widetilde R, L')$ (resp. $(\widetilde P, L'')$),
    \item [(b)] the surface $\overline R\cup_T\overline P$ is a general member of $\mathcal D^*_{[4]}$ and $(\overline L', \overline L'')$ is a line bundle on it,
    \item [(c)] $(\overline R\cup_T\overline P, (\overline L', \overline L''))$ deforms to a smooth polarized Enriques surface $(S,L)$ such that $|L|$ contains an irreducible, rigid nodal elliptic curve.
    \end{itemize}
\end{prop}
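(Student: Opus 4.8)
The plan is to run the construction of Proposition \ref{prop:metodo1} with $(s,t,k)=(4,5,3)$. These values violate inequality (i) of that proposition (they give $s+t-5=4>3=\min\{3,s,t-1\}$), and the failure is structural: with $s+t=9$ the nine blow--up centres of $\overline R\cup_T\overline P$ are already exhausted, so no free point is left with which to impose the semistability condition \eqref{eq:cond}, as was done in Proposition \ref{prop:metodo1} by adjoining the auxiliary points $w_j$. The idea is to supply the missing degree of freedom through the two extra $(-1)$--curves $D_4,D_5$: letting their centres $x_4,x_5$ move in a suitable pencil and appealing to Lemma \ref{lemma:P3-family}(ii) will let me enforce semistability and the existence of the plane curve $D_0$ simultaneously.

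First I would set $m:=L_0'\cdot T-3=L_0''\cdot T-3$ (equal by (i)), which is positive because $(\star)$ gives $L_0'\cdot T=-L_0'\cdot K_{\widetilde R}\ge 4$. By (ii), Proposition \ref{prop:R2} applies to $L_0'$: $V^{**}_{1,m}(\widetilde R,T,L_0')$ is nonempty and a general member $C_0$ meets $C_1,C_2,C_3$ transversely, with $C_0\cap T=mx+p_1+p_2+p_3$, the points $x,p_1,p_2,p_3$ being general on $T$ by Lemma \ref{lemma:R3}(ii). As in Proposition \ref{prop:metodo1} I take $C_i=\e_i$ on $\widetilde R$ and prescribe the identifications $p=q=x$, $x_i=p_i$ and $y_i=q_i$ ($i=1,2,3$), under which the glued curve $Y:=(C_0+C_1+C_2+C_3)\cup_T(D_0+D_1+D_2+D_3)$ is Cartier; here $y_1,\dots,y_4$ are the centres of $\overline R$ and $x_1,\dots,x_5$ those of $\overline P$, with $x_i=p_i$ for $i\le 3$. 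The fourth centre $y_4$ of $\overline R$ plays only an auxiliary role and does not lie on $Y$.

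The heart of the argument is the positioning of $x_4,x_5$. Semistability \eqref{eq:cond} requires
\[ y_1+\cdots+y_4+p_1+p_2+p_3+x_4+x_5\in|\N_{T/R}\otimes\N_{T/P}|, \]
which, the other seven points being fixed, pins down the class of $x_4+x_5$; I let $\mathfrak g$ be the resulting $g^1_2$. On the other hand $D_0$ must lie in $V^*_{0,m}(\overline P,T,L_0'')$ (nonempty by Proposition \ref{prop:P2}, since $L_0''$ is big and nef), with $D_0\cap T=mx+q_1+q_2+q_3=mx+y_1+y_2+y_3$; on a fixed plane blow--up this trace is confined to the single class $L_0''|_T$ (Lemma \ref{lemma:P3}(ii)), so to realise the prescribed trace I must move $\overline P$. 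I would therefore let $x_4+x_5$ range over $\mathfrak g$ and apply Lemma \ref{lemma:P3-family}(ii), whose hypotheses are exactly condition (iv): $L''$ is positive on $D_4$ and $D_5$ (since $L''$ is big and nef and $D_4,D_5$ are disjoint from $D_1,D_2,D_3$) with the two distinct values $L''\cdot D_4\neq L''\cdot D_5$. The lemma gives that the restriction map of the associated family of logarithmic Severi varieties to $\Sym^{L''\cdot T}(T)_m$ is finite and surjective; surjectivity produces a position $x_4+x_5\in\mathfrak g$ and a curve $D_0$ with $D_0\cap T=mx+y_1+y_2+y_3$, and since this $x_4+x_5$ already lies in $\mathfrak g$, semistability holds automatically.

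With these choices $\overline R\cup_T\overline P$ is a member of $\D^*_{[4]}$ on which $(\overline L',\overline L'')$ is a line bundle by Remark \ref{rem:ingcro10}; letting $y_1,\dots,y_4$ and $C_0$ vary produces an $8$--dimensional family (the dimension of $\D^*_{[4]}$ for fixed general $E$) dominating $\D^*_{[4]}$, so the general such surface is general in $\D^*_{[4]}$, which gives (a) and (b). By construction $Y$ satisfies the hypotheses of Proposition \ref{prop:GS} with $k=l=3$, hence deforms to an irreducible rigid nodal elliptic curve on the general Enriques deformation $(S,L)$, proving (c). The main obstacle is precisely this simultaneous positioning: writing $a_j=L''\cdot D_j$, semistability fixes the class of $x_4+x_5$ while the existence of $D_0$ with the correct trace fixes the class of $a_4x_4+a_5x_5$, and these two conditions on $(x_4,x_5)\in T\times T$ admit a common solution exactly when $a_4\neq a_5$ — which is why Proposition \ref{prop:metodo1} cannot cover this extremal case and why condition (iv) is imposed.
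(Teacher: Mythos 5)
Your overall strategy (use the two extra $(-1)$--curves $D_4,D_5$ and Lemma \ref{lemma:P3-family}(ii) to recover the degree of freedom that semistability consumes) is indeed the idea behind the paper's proof, and your diagnosis of why Proposition \ref{prop:metodo1} cannot cover this case is accurate. But there is a genuine gap at the decisive step. Lemma \ref{lemma:P3-family}(ii) is a statement about the \emph{closure} $\overline{\V^{[n]}_{0,m}}$ of the relative logarithmic Severi variety: surjectivity of the finite trace map only produces, over your prescribed divisor $mx+y_1+y_2+y_3$, a point of this closure. The corresponding curve may be a degenerate limit --- reducible, with worse singularities, singular at the tacnodal point, or meeting the exceptional curves $D_1,D_2,D_3$ non-transversally --- and then it is not a valid $D_0$ for Proposition \ref{prop:GS} (which needs $[D_0]\in V^*_{0,m}$ and $D=D_0+D_1+D_2+D_3$ nodal). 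Crucially, you cannot repair this by ``taking a general member'': since you fixed the $R$--side first, the trace is \emph{completely} prescribed ($x$, $y_1,y_2,y_3$ are already chosen), the trace map is finite, so there are only finitely many candidate curves over it, and the bad locus and your prescribed trace both depend on the same points $y_1,\ldots,y_4,p_1,p_2,p_3$, so no genericity argument separates them. This is exactly the difficulty the paper's proof is built to overcome: it does \emph{not} fix the $R$--curve first, but introduces the three-dimensional family $\W$ of good pairs over $T^3$ (varying $x_4,x_8,x_9$), fibers its closure by the tacnodal point $x'$, proves (with the same Lemma \ref{lemma:P3-family}(ii)) that the closure meets the semistability locus over every $(x',x_4)$, and then performs an exchange: for \emph{general} $x'$, one good pair supplies the $P$--curve $D^1$ over a $P$--blow--up whose centers $x_8,x_9$ satisfy semistability relative to $x_4$, a second good pair supplies the $R$--curve $C^2$ over $\Bl_{y_1,y_2,y_3,x_4}(R)$, and the mixed curve $C^2\cup_T D^1$ is Cartier on the semistable surface because both curves have the same tacnodal point and the same prescribed intersections with $T$. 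Without this mixing step (or some substitute for it), your construction does not establish the hypotheses of Proposition \ref{prop:GS}.

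A secondary, fixable error: you claim $L''$ is positive on $D_4$ and $D_5$ ``since $L''$ is big and nef''. Bigness and nefness are hypotheses on $L_0''$, not $L''$, and nefness only gives $L''\cdot D_i=L_0''\cdot D_i\geq 0$; one of the two values may well be zero. The paper deals with this by contracting the $(-1)$--curve on which $L''$ is zero, which reduces that case to Proposition \ref{prop:metodo1} with $(s,t,k)=(4,4,3)$; only after this reduction may one assume both values positive and distinct, as Lemma \ref{lemma:P3-family}(ii) requires.
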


\begin{proof}
  We argue as in the beginning of the previous proof with $k=3$, $s=4$ and $t=5$, noting that $L''$ is positive on at least one of the two curves $D_4,D_5$. We find, as before, a Cartier divisor in a surface
  \begin{equation} \label{eq:yinx}
    Y:=C \cup_T D \subset \widetilde{R} \cup_T \widetilde{P}=:X,\quad \widetilde{R}= \Bl_{y_1,y_2,y_3,y_4}(R), \quad  \widetilde{P}=\Bl_{y_5,\ldots,y_9}(P),
\end{equation}
for general $y_i \in T$, $i \in \{1,\ldots,9\}$, such that, denoting by $\e_i$ the exceptional divisor over $y_i$, one has
$$
C=C_0+\e_1+\e_2+\e_3, \quad D=D_0+\e_5+\e_6+\e_7,
$$
with $C\equiv L'$ and $D\sim L''$ both nodal, $C_0\in V^{**}_{1,m}(\widetilde R, T, L'_0)$, $D_0\in V^*_{0,m}(\widetilde P, T, L''_0)$, and we set $C_i=\e_i$ for
$i \in \{1,2,3\}$ and $D_i=\e_{i+4}$ for $i \in \{1,2,3,4,5\}$. Moreover, setting
$m:=C_0 \cdot T-3=D_0 \cdot T-3$,
there is a point $x\in T$ such that  
$$
C_0 \cap T  =  mx+y_5+y_6+y_7, \quad  D_0 \cap T  =  mx+y_1+y_2+y_3.
$$
As in the previous proof, the points $x,y_i$ are general on $T$.

The problem now is that we cannot a priori guarantee that $\sum_{i=1}^9y_i \in |\N_{T/{R}} \* \N_{T/{P}}|$ to ensure that $X$ is a member of $\D^*$ and therefore conclude as in the previous proof; we only know that $X$ is a member of $\D$. We will prove that we can create such a member of $\D^*$ without losing the ``nice'' properties of $Y$. 

Note first that condition (iv) says that $L'' \cdot \e_8 \neq L'' \cdot \e_9$. If $L''\cdot \e_i=0$ for $i =8$ or $9$, we may contract $\e_i$ and reduce to the case studied in the previous proposition. We will 
therefore assume that
\begin{equation} \label{eq:posi1}
  L''\cdot \e_8>0, \; \; L''\cdot \e_9>0\; \;  \mbox{and} \; \;  L'' \cdot \e_8 \neq L'' \cdot \e_9.
\end{equation}

    Fix now  general $y_1,y_2,y_3,y_5,y_6,y_7 \in T$. Varying $(x_4,x_8,x_9)$ in $T^3$ we obtain a 3--dimensional family of surfaces of the form 
    $$\Bl_{y_1,y_2,y_3,x_4}(R) \cup_T \Bl_{y_5,y_6,y_7,x_8,x_9}(P)$$
together with a family of lines bundles $(L',L'')$.
    There exists a relative Hilbert scheme $\mathcal H$ of effective
    Cartier divisors $C'\cup_TD'$ on these surfaces, such that on each
    surface the line bundle $(C',D')$ is  in the numerical equivalence
    class $[L',L'']$, cf.\ Remark \ref {rem:ingcro10} (considering
    $L'$ and $L''$ as relative line bundles, see subsection
    \ref{s:logS-family}).
    By our assumptions we have a nonempty subscheme $\W$ of $\mathcal H$  with a dominating morphism $\W \to T^3$ whose fiber over a point $(x_4,x_8,x_9)$ consists of pairs $(X',Y')$  such that
    \begin{itemize}
      \item $X'=R'\cup_TP'$ with $R'=\Bl_{y_1,y_2,y_3,x_4}(R)$ and $P' =\Bl_{y_5,y_6,y_7,x_8,x_9}(P)$,
      \item $Y'=C' \cup_T D'$, where
\begin{eqnarray*}
    C' & = & C'_0+\e_1+\e_2+\e_3\equiv L' \\
   D' & = & D'_0+\e_5+\e_6+\e_7\sim L'', \\
 &\mbox{with} & \hspace{-0.25cm} \mbox{$C'_0\in V^{**}_{1,m}(R',T,L'_0)$,  $D'_0\in V^*_{0,m}(P',T,L''_0)$ intersecting all $\e_i$ transversely,} \\
    C'_0 \cap T & = & mx'+y_5+y_6+y_7, \\
   D'_0 \cap T & = & mx'+y_1+y_2+y_3,\\
&\mbox{for} & \hspace{-0.3cm} \mbox{a point $x' \in T\setminus \{y_1,y_2,y_3, y_5,y_6,y_7\}$.} 
\end{eqnarray*}
    \end{itemize}
    We once and for all substitute $\W$ with a dominating component containing
    $[(X,Y)]$
and will henceforth assume $\W$ is irreducible. Taking the closure in $\mathcal H$, we obtain a closed scheme with a surjective morphism
    \[ g:\overline{\W} \to T^3. \] 
    If $[(X',Y')]\in \overline{\W}$, then $Y'$ looks like $C' \cup_T D'$ as above, except that
    $C'_0\in \overline{V}_{1,m}(R',T,L'_0)$ and $D'_0\in
    \overline{V}_{0,m}(P',T,L''_0)$, the intersection with the $\e_i$s need not be transversal and $x'$ may coincide with one of the
    points $y_i$s
(this follows from Lemmas \ref{lemma:R3}(i) and \ref{lemma:P3}(i)).  In any
event $(X',Y')$ comes equipped with a point $x' \in T$, which is the
only point of intersection between the non-exceptional members of
$Y'$. We will call this the {\it tacnodal point} of $[(X',Y')]$
(although it may be a worse singularity of $Y'$ for special pairs). We
therefore have a natural map
    \[ p: \overline{\W} \to T \] sending a pair to its tacnodal
point. For $x' \in T$ we set $\overline{\W}_{x'}:=p^{-1}(x')$; this is
the locus of pairs $[(X',Y')]$ of $\overline{\W}$ with tacnodal point
$x'$.

\begin{claim} \label{cl:10} The map $g$ is finite and $\dim
(\overline{\W})=3$.
 \end{claim}
    
\begin{proof}[Proof of claim] Since $g$ is surjective, we only need to
prove that $g$ is finite. Fix any $X'=R' \cup_T P'$ as above, with
$R'=\Bl_{y_1,y_2,y_3,x_4}(R)$ and $P' =\Bl_{y_5,y_6,y_7,x_8,x_9}(P)$,
and assume $[(X',Y')] \in \overline{\W}$. Let $\overline{C_0} \subset
R'$ and $\overline{D_0} \subset P'$ be the non-exceptional irreducible
components of $Y'$, elliptic and rational, respectively. They
intersect only at the tacnodal point $x' \in T$, and otherwise they
intersect $T$ in fixed points, as the $(-1)$-curves are fixed on each
component of $X'$. Therefore, by Lemma \ref {lemma:P3}(ii), there are
finitely many possibilities for the point $x'$ and the curve
$\overline{D_0}$, and consequently finitely many possibilities for the
intersection $\overline{C_0} \cap T$. As $\overline{C_0}$ is odd by
assumption (ii), there are by Lemma \ref {lemma:R3}(ii) only finitely
many possibilities for $\overline{C_0}$ as well. This proves that $g$
is finite. \end{proof}

    Let $\left(T^3\right)^*$ denote the two-dimensional subset of
triples $(x_4,x_8,x_9) \in T^3$ such that
\begin{equation}
  \label{eq:cond0} y_1+y_2+y_3+x_4+y_5+y_6+y_7+x_8+x_9 \in |\N_{T/R}
\* \N_{T/P}|,
\end{equation} and let $\overline{\W}^*=g^{-1}(\left(T^3\right)^*)$.
This is the locus of pairs $[(X',Y')]$ of $\overline{\W}$ such that
$X'$ is semi-stable. For $x' \in T$ we set
$\overline{\W}^*_{x'}:=\overline{\W}^*\cap \overline{\W}_{x'}$.

We write $g_1:\overline{\W} \to T$ and $g_2:\overline{\W} \to T \x T$
for the composition of $g$ with the projections onto the first factor
and onto the product of the second and third factors, respectively. In
other words $g_1$ maps a pair $(X'=R' \cup_T P',Y')$ as above to
$x_4$, whereas $g_2$ maps it to $(x_8,x_9)$.

\begin{claim} \label{cl:2} For all $x'\in T$ the following hold:
  \begin{itemize}
\item[(i)] $\overline{\W}^*_{x'} \neq \emptyset$ (hence $\overline
{\W}_{x'}\neq \emptyset$),
  \item[(ii)] $\left(g_1\right)|_{\overline{\W}_{x'}^*}$ is surjective
(hence also $\left(g_1\right)|_{\overline{\W}_{x'}}$ is surjective).
     \end{itemize}
\end{claim}

\begin{proof}[Proof of claim] Fix any $x_4\in T$
 
and consider $R'= \Bl_{y_1,y_2,y_3,x_4}(R)$.
By Lemma \ref{lemma:R3}(ii), there are finitely many curves
$C'_0\in \overline{V}_{1,m}(R',T,L'_0)$, 
such that $C_0' \cap T = mx'+y_5+y_6+y_7$. 
On the other hand, by Lemma \ref {lemma:P3-family}(ii),
taking property \eqref{eq:posi1} into account,
and considering the linear series
\begin{equation}
\label{eq:g(x4)}
  \mathfrak{g}(x_4) := |\N_{T/R} \* \N_{T/P}(-
  y_1-y_2-y_3-x_4-y_5-y_6-y_7)|
\end{equation}
of type $g^1_2$ on $T$,
there exist
finitely many curves $D'_0\in \overline{V}_{0,m}(P',T,L''_0)$,
with $P'=\Bl_{y_5,y_6,y_7,x_8,x_9}(P)$ for some
$x_8+x_9 \in \mathfrak {g}(x_4)$,
such that $D_0''\cap T = mx'+y_1+y_2+y_3$.
Thus
\[
  \bigr(R'\cup_TP',
  (C'_0+\e_1+\e_2+\e_3) \cup_T (D'_0+\e_5+\e_6+\e_7)\bigl)
  \in \overline{\W}^*_{x'} \cap g_1 ^{-1}(x_4),
\]
which proves (i) and (ii)
(note in particular that the condition that
$x_8+x_9 \in \mathfrak {g}(x_4)$ is equivalent to \eqref{eq:cond0}).

\end{proof}

Note that Claim \ref {cl:2} also implies that $p$ is surjective, whence all fibers $\overline{\W}_{x'}$ are two--dimensional by Claim \ref{cl:10}.

Now consider the map
\begin{eqnarray*}
  \sigma: T \x T & \longrightarrow & \Sym^2(T), \\
          (x,y) & \mapsto & x+y
  \end{eqnarray*}
and recall that there is a fibration $u:\Sym^2(T) \to T$ with fibers
being the $g^1_2$s on $T$.

For all $x_4 \in T$, we consider the linear series
$\mathfrak{g}(x_4) \subset \Sym^2(T)$ defined in \eqref{eq:g(x4)}
above;
it is a $g^1_2$, hence a fiber of $u$.

  \begin{claim} For all $x'\in T$ one has 
    $u(\sigma(g_2(\overline{\W}_{x'})))=T$, that is, $\sigma(g_2(\overline{\W}_{x'}))$ is not a union of fibers of $u$. 
  \end{claim}

  \begin{proof}[Proof of claim]
    Suppose to the contrary that $\sigma(g_2(\overline{\W}_{x'}))$ is a union of fibers of $u$. Then, for general $x_4 \in T$ we would have $\sigma(g_2(\overline{\W}_{x'})) \cap \mathfrak{g}(x_4) = \emptyset$, contradicting  Claim \ref{cl:2}(ii). 
  \end{proof}

  Let now $x'\in T$ be general.
  Set $\W_{x'}=\overline{\W}_{x'} \cap \W$, 
which is nonempty, as 
$[(X,Y)] \in \W_x$. 
  Since $\overline{\W}_{x'}$ is a general fiber of $p$ and $\overline{\W}$ is irreducible, $\W_{x'}$ is dense in any component of $\overline{\W}_{x'}$. It follows that $g_2(\W_{x'})$ is dense in any component of $g_2(\overline{\W}_{x'})$. By the last claim, $\sigma(g_2(\W_{x'})) \cap \mathfrak{g}(x_4) \neq \emptyset$ for general $x_4 \in T$. Pick any $(x_8,x_9) \in g_2(\W_{x'}) \cap \sigma^{-1}(\mathfrak{g}(x_4))$.
  Then by definition there exists a pair $[(X^1,Y^1=C^1 \cup_T D^1)] \in \W_{x'}$ such that
  \begin{eqnarray}
   \nonumber  \label{eq:caz1}
    & X^1=\Bl_{y_1,y_2,y_3,x'_4}(R) \cup_T \Bl_{y_5,y_6,y_7,x_8,x_9}(P) \; \; \mbox{for some} \; \; x'_4 \in T,& \text{and} \\
   \nonumber \label{eq:caz2} & y_1+y_2+y_3+x_4+y_5+y_6+y_7+x_8+x_9 \in |\N_{T/R} \* \N_{T/P}|.& 
\end{eqnarray}
In particular, $D^1\subset \Bl_{y_5,y_6,y_7,x_8,x_9}(P)$ satisfies the conditions of Proposition \ref{prop:GS} and
\begin{equation}\label{eq:caz3}
  D^1=D^1_0 + \e_5+ \e_6+\e_7, \; \; D^1_0 \cap T=mx'+y_1+y_2+y_3.  
  \end{equation}

  As $x_4$ is general in $T$, one has  $x_4 \in g_1(\W_{x'})$ (since $\left(g_1\right)|_{\overline{\W}_{x'}}$ is surjective, by Claim \ref {cl:2}(ii)). Then by definition there exists a pair $[(X^2,Y^2=C^2 \cup_T D^2)] \in \W_{x'}$ such that
  \begin{equation*}
    \label{eq:caz4}
     X^2=\Bl_{y_1,y_2,y_3,x_4}(R) \cup_T \Bl_{y_5,y_6,y_7,x'_8,x'_9}(P) \; \; \mbox{for some} \; \; x'_8,x'_9 \in T.
\end{equation*}
In particular, $C^2 \subset \Bl_{y_1,y_2,y_3,x_4}(R)$ satisfies the conditions of Proposition \ref{prop:GS} and
\begin{equation}\label{eq:caz5}
  C^2=C^2_0 + \e_1+ \e_2+\e_3, \; \; C^2_0 \cap T=mx'+y_5+y_6+y_7.  
  \end{equation}
  Consider the pair
   \[ (\overline X=\Bl_{y_1,y_2,y_3,x_4}(R) \cup_T \Bl_{y_5,y_6,y_7,x_8,x_9}(P),\overline Y=C^2 \cup_T D^1). \]
   Recall that $y_1,y_2,y_3, y_5,y_6,y_7$ were chosen general to start with, and $x_4$ and $x'$ are also general by construction. Lemma \ref {lemma:P3-family}(ii)  implies that we may choose $x_8+x_9$ general in $\mathfrak g(x_4)$. It follows that $\overline X$ is a general member of $\mathcal D_{[4]}^*$, i.e., (b) holds. Properties \eqref{eq:caz3} and \eqref{eq:caz5} imply that the pair $(\overline X, \overline Y)$   satisfies the conditions of Proposition \ref{prop:GS} and therefore  (c) holds.  It is moreover clear that also (a) holds. 
 \end{proof}

\section{Isotropic $10$-sequences and simple isotropic decompositions} \label{sec:iso}

An important tool for identifying the various components of the moduli spaces of polarized Enriques surfaces is the decomposition of line bundles as sums of effective isotropic divisors.  In this section we will recall some notions and results from \cite{cdgk,kn-JMPA}.

\begin{defn} (\cite[p.~122]{cd})  \label{def:rseq}
  An {\em isotropic $10$-sequence} on an Enriques surface $S$  is a sequence of isotropic effective divisors $\{E_1, \ldots, E_{10}\}$  such that $E_i \cdot E_j=1$ for $i \neq j$.
\end{defn}

It is  well-known that any Enriques surface contains isotropic $10$-sequences. Note that we, contrary to \cite{cd}, require the divisors to be {\it effective}, which can always be arranged by changing signs. We also recall the following result:

\begin{lemma} {\rm (\cite[Lemma 3.4(a)]{cdgk},\cite[Cor. 2.5.5]{cd})} \label{lemma:ceraprima}
   Let $\{E_1,\ldots,E_{10}\}$ be an isotropic $10$-sequence. Then there exists a divisor $D$ on $S$ such that $D^2=10$  and
$3D \sim E_1+\cdots+E_{10}$. Furthermore, for any $i \neq j$, we have
\begin{equation} \label{eq:10-3}
 D \sim E_i+E_j+E_{i,j}, \; \; \mbox{with $E_{i,j}$ effective isotropic,} \; \; E_i \cdot E_{i,j}=E_j \cdot E_{i,j}=2,
\end{equation} 
and 
$E_k \cdot E_{i,j}=1 \; \; \mbox{for} \; \; k \neq i,j$. 
 Moreover,
$E_{i,j} \cdot E_{k,l}= \begin{cases} 1, \; \mbox{if} \; \{i,j\} \cap \{k,l\} \neq \emptyset, \\
2, \; \mbox{if} \; \{i,j\} \cap \{k,l\} = \emptyset. \end{cases}$
\end{lemma}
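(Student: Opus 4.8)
The plan is to prove Lemma \ref{lemma:ceraprima} by exploiting the intersection-theoretic structure of the Enriques lattice, which is $U \oplus E_8$ up to torsion, and constructing $D$ explicitly from the given $10$-sequence. First I would set $D$ to be the divisor class determined by $3D \sim E_1+\cdots+E_{10}$; since $\Num(S)$ is an even unimodular lattice of signature $(1,9)$ and the class $E_1+\cdots+E_{10}$ is $3$-divisible in $\Num(S)$ (this is the classical fact from \cite[Cor. 2.5.5]{cd} that an isotropic $10$-sequence generates, together with its sum, a sublattice forcing divisibility by $3$), such a $D$ exists at the numerical level, and one lifts to $\Pic(S)$ using that $K_S$ is the only torsion. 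I would then compute $D^2$ directly: writing $9D^2 = (E_1+\cdots+E_{10})^2 = \sum_i E_i^2 + 2\sum_{i<j} E_i\cdot E_j = 0 + 2\binom{10}{2} = 90$, so $D^2 = 10$.

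Next I would verify the effectivity and the decomposition \eqref{eq:10-3}. For fixed $i \neq j$ I would \emph{define} $E_{i,j} := D - E_i - E_j$ at the level of divisor classes and compute its numerical invariants. Using $D \cdot E_k = \frac{1}{3}(E_1+\cdots+E_{10})\cdot E_k = \frac{1}{3}(E_k^2 + \sum_{l\neq k} E_l\cdot E_k) = \frac{1}{3}(0+9) = 3$ for every $k$, I would obtain $E_{i,j}^2 = D^2 - 2D\cdot E_i - 2D\cdot E_j + E_i^2 + E_j^2 + 2E_i\cdot E_j = 10 - 6 - 6 + 0 + 0 + 2 = 0$, so $E_{i,j}$ is isotropic. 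Likewise $E_i \cdot E_{i,j} = D\cdot E_i - E_i^2 - E_i\cdot E_j = 3 - 0 - 1 = 2$, and for $k \neq i,j$, $E_k \cdot E_{i,j} = D\cdot E_k - E_k\cdot E_i - E_k\cdot E_j = 3 - 1 - 1 = 1$, as claimed. To see that $E_{i,j}$ is \emph{effective}, I would invoke the standard dichotomy on Enriques surfaces: a class with $E_{i,j}^2 = 0$ and $E_{i,j}\cdot H > 0$ for an ample $H$ is either effective or anti-effective, and a short positivity check (e.g.\ $E_{i,j}\cdot E_k = 1 > 0$) rules out the latter, so after possibly replacing $E_{i,j}$ by $E_{i,j}+K_S$ one arranges effectivity without disturbing the numerical relations.

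Finally I would establish the mutual intersection numbers $E_{i,j}\cdot E_{k,l}$ by the same bilinear expansion. Writing $E_{i,j}\cdot E_{k,l} = (D-E_i-E_j)\cdot(D-E_k-E_l)$ and expanding using $D^2 = 10$, $D\cdot E_m = 3$, $E_m^2 = 0$ and $E_m\cdot E_n = 1$ for $m\neq n$, the result splits into two cases according to the overlap of $\{i,j\}$ and $\{k,l\}$, yielding the case distinction $1$ or $2$ in the statement; this is a bookkeeping computation tracking how many of the four indices coincide.

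I expect the only genuinely nonformal step to be the very first one, namely the existence of the class $D$ with $3D \sim E_1+\cdots+E_{10}$, since this is a global divisibility statement about the Enriques lattice rather than a mechanical intersection computation; everything downstream is determined by the bilinear form once $D\cdot E_k = 3$ and $D^2 = 10$ are in hand. Fortunately this divisibility is exactly the content of the cited \cite[Cor. 2.5.5]{cd}, so in practice the proof reduces to invoking that result and then carrying out the intersection-number bookkeeping; the effectivity adjustments via $K_S$ are routine given that $K_S$ is the unique nonzero torsion class.
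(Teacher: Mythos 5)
Your overall route is the same as the paper's, which simply cites \cite[Cor. 2.5.5]{cd} and \cite[Lemma 3.4(a)]{cdgk}: take the $3$-divisibility of $E_1+\cdots+E_{10}$ in $\Num(S)$ from Cossec--Dolgachev, lift to $\Pic(S)$ using that $K_S$ is the only nonzero torsion class, and then reduce everything else to bilinear-form bookkeeping, which you carry out correctly ($D^2=10$, $D\cdot E_k=3$, $E_{i,j}^2=0$, $E_i\cdot E_{i,j}=E_j\cdot E_{i,j}=2$, $E_k\cdot E_{i,j}=1$, and the two cases of $E_{i,j}\cdot E_{k,l}$ all check out). The one step that does not hold up as written is the effectivity of $E_{i,j}$.

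First, your ``short positivity check'' $E_{i,j}\cdot E_k=1>0$ does not by itself rule out the anti-effective alternative: the principle it implicitly invokes --- that an effective class meets an effective class nonnegatively --- is false on a surface (two effective divisors sharing a component of negative square can meet negatively, e.g.\ a $(-2)$-curve against itself). What saves the argument is that both classes in play are isotropic: if $K_S-E_{i,j}$ were effective, it would be a nonzero class of square $0$, hence (being effective, so positive against an ample class) it lies in the closure of the positive cone, as does each effective isotropic $E_k$; in a lattice of signature $(1,9)$ any two nonzero classes in the same closed half-cone have nonnegative product, contradicting $(K_S-E_{i,j})\cdot E_k=-1$. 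So you must explicitly use $E_{i,j}^2=0$ and $E_k^2=0$ (or, alternatively, $E_{i,j}\cdot D=4>0$ with $D$ in the open positive cone) together with this light-cone inequality, not bare positivity against an effective class. Second, your fallback ``after possibly replacing $E_{i,j}$ by $E_{i,j}+K_S$'' is not available: $E_{i,j}$ is pinned down in $\Pic(S)$ as $D-E_i-E_j$, and you cannot compensate by moving $D$ either, because $3(D+K_S)\sim 3D+K_S\not\sim E_1+\cdots+E_{10}$; the twist would therefore destroy exactly the linear equivalences the lemma asserts. Fortunately it is also unnecessary: Riemann--Roch gives $h^0(E_{i,j})+h^0(K_S-E_{i,j})\geq\chi(E_{i,j})=1$, and the cone argument above kills $h^0(K_S-E_{i,j})$, so the untwisted class $D-E_i-E_j$ itself has a section.
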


The next result yields a ``canonical'' way of decomposing line bundles:

\begin{proposition} {\rm (\cite[Thm. 5.7]{kn-JMPA})} \label{prop:sid}
  Let $L$ be an effective line bundle on an Enriques surface $S$ such that $L^2>0$. Then there are unique nonnegative integers $a_0,a_1,\ldots,a_7,a_9$, $a_{10}$, depending on $L$,  satisfying
  \begin{eqnarray}
  \label{eq:condcoff}
  & a_1\geq \cdots \geq a_7, & \; \; \mbox{and} \\
  \label{eq:condcoff'}
  & a_9+a_{10} \geq a_0 \geq a_9 \geq a_{10}&
\end{eqnarray}
such that $L$ can be written as 
\begin{equation} \label{eq:scrivoL}
     L \sim a_1E_1+\cdots +a_7E_7+a_9E_9+a_{10}E_{10}+a_0E_{9,10}+\varepsilon_L K_S,
   \end{equation}
 for an isotropic $10$-sequence $\{E_1,\ldots,E_{10}\}$ (depending on $L$) and
\begin{equation} \label{eq:defeps}
\varepsilon_L= \begin{cases} 0, & \mbox{if $L+K_S$ is not $2$-divisible in $\Pic (S)$,} \\
1, & \mbox{if $L+K_S$ is $2$-divisible in $\Pic (S)$.} 
\end{cases}
\end{equation}
\end{proposition}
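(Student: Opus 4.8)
The plan is to pass first to $\Num(S)$, where $K_S\equiv 0$, so that the statement becomes a lattice-theoretic normal form for the class of $L$, and to deal with the correction $\varepsilon_L K_S$ as a separate lifting step at the end. The key preliminary observation is that, for any isotropic $10$-sequence $\{E_1,\ldots,E_{10}\}$, Lemma~\ref{lemma:ceraprima} makes the ten classes $\{E_1,\ldots,E_7,E_9,E_{10},E_{9,10}\}$ a $\ZZ$-basis of $\Num(S)$. Indeed the Gram matrix of $E_1,\ldots,E_{10}$ is $J-\mathrm{Id}$ (with $J$ the all-ones matrix), of determinant $-9$, so these span a sublattice of index $3$; the class $E_{9,10}=D-E_9-E_{10}$ (with $3D\sim E_1+\cdots+E_{10}$) supplies a generator of the quotient $\Num(S)/\langle E_1,\ldots,E_{10}\rangle\cong\ZZ/3$, and $E_8=-E_1-\cdots-E_7+2E_9+2E_{10}+3E_{9,10}$ lies in the span, so the ten displayed classes generate all of $\Num(S)$. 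Hence, once the sequence is fixed, the numerical class of $L$ has a \emph{unique} integral expansion $a_1E_1+\cdots+a_7E_7+a_9E_9+a_{10}E_{10}+a_0E_{9,10}$, and the whole problem is to choose the sequence so that these coefficients are nonnegative and satisfy \eqref{eq:condcoff}--\eqref{eq:condcoff'}.

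I would then translate the target inequalities into conditions on intersection numbers. With $S:=a_1+\cdots+a_7+a_9+a_{10}$, the rules of Lemma~\ref{lemma:ceraprima} give
\[
  L\cdot E_j=S+a_0-a_j\quad(1\le j\le 7),\qquad L\cdot E_8=S+a_0,
\]
\[
  L\cdot E_9=S+2a_0-a_9,\qquad L\cdot E_{10}=S+2a_0-a_{10},\qquad L\cdot E_{9,10}=S+a_9+a_{10}.
\]
Thus \eqref{eq:condcoff} is equivalent to $L\cdot E_1\le\cdots\le L\cdot E_7$, while \eqref{eq:condcoff'} is equivalent to $L\cdot E_8\le L\cdot E_9\le L\cdot E_{10}$ together with $L\cdot E_8\le L\cdot E_{9,10}$. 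Moreover, granting these inequalities, nonnegativity of all coefficients reduces to the single condition $a_7\ge 0$, i.e.\ $L\cdot E_7\le L\cdot E_8$ (the chain \eqref{eq:condcoff'} already forces $a_{10}\ge a_0-a_9\ge 0$, hence $a_9,a_0\ge 0$). In short, the desired normal form is precisely the one that \emph{orders and minimises} the intersection numbers of $L$ against the members of the sequence.

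For existence I would exploit that the Weyl group $W$ generated by reflections in $(-2)$-classes acts transitively (up to permutation) on isotropic $10$-sequences, cf.\ \cite{cd}; varying the sequence over its $W$-orbit is the same as letting $W$ act on $L$, i.e.\ as passing to the nef model of $L$, which exists because $L^2>0$ puts $[L]$ in the positive cone. Among the sequences in the orbit I would choose one minimising the sorted tuple $(L\cdot E_i)$ lexicographically; such a minimiser exists because $L^2>0$ bounds these intersection numbers from below and only finitely many configurations are small. After relabelling $E_1,\ldots,E_7$ by increasing $L\cdot E_i$ and arranging the three remaining indices so that $L\cdot E_8\le L\cdot E_9\le L\cdot E_{10}$, the minimality of the configuration is exactly what forces all the inequalities above, hence \eqref{eq:condcoff}--\eqref{eq:condcoff'} and the nonnegativity $a_7\ge 0$: any violated inequality would produce, via a reflection in a suitable $(-2)$-class, a sequence with strictly smaller tuple.

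Uniqueness is then formal: conditions \eqref{eq:condcoff}--\eqref{eq:condcoff'} cut out a fundamental domain for the group generated by the admissible relabellings and the $(-2)$-reflections preserving the configuration type, so the reduced tuple $(L\cdot E_i)$—and therefore the integers $a_0,a_1,\ldots,a_7,a_9,a_{10}$—depends only on the numerical class of $L$. Finally, fixing lifts of the $E_i$ to effective half-pencils makes $M:=a_1E_1+\cdots+a_7E_7+a_9E_9+a_{10}E_{10}+a_0E_{9,10}$ a definite class in $\Pic(S)$ numerically equal to $L$; since $K_S$ is the only nonzero torsion class, $L\sim M$ or $L\sim M+K_S$, and reducing this identity modulo $2\Pic(S)$ while using $3D\sim E_1+\cdots+E_{10}$ shows that the second alternative holds exactly when $L+K_S$ is $2$-divisible, which pins down $\varepsilon_L$ as in \eqref{eq:defeps}. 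The main obstacle is the reduction step in the existence part—proving both that a minimising sequence exists and that minimality forces \emph{all} the required inequalities together with $a_7\ge 0$; this is where the positivity coming from $L^2>0$, the $W$-action, and the combinatorics of the $10$-sequence genuinely interact, whereas uniqueness and the $\varepsilon_L$-bookkeeping are comparatively routine once the fundamental-domain picture is in place.
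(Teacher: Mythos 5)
You are proving a statement that the paper itself never proves: Proposition~\ref{prop:sid} is imported wholesale from \cite{kn-JMPA}, so there is no internal proof to compare against, and your self-contained lattice-theoretic route must be judged on its own terms. Its preliminaries are correct and well executed: $\{E_1,\ldots,E_7,E_9,E_{10},E_{9,10}\}$ is indeed a $\ZZ$-basis of $\Num(S)$, and your dictionary identifying \eqref{eq:condcoff}, \eqref{eq:condcoff'} and nonnegativity of the $a_i$ with the ordering conditions $L\cdot E_1\le\cdots\le L\cdot E_{10}$ and $L\cdot E_8\le L\cdot E_{9,10}$ is exact. But both halves of the core argument have genuine gaps, and the final $\varepsilon_L$ step is false as stated.

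\emph{Existence.} The descent step --- ``any violated inequality would produce, via a reflection in a suitable $(-2)$-class, a sequence with strictly smaller tuple'' --- is precisely what you flag as the main obstacle, and it is asserted, not proved. It can be closed, but only after making three choices you leave open. After sorting (a pure relabelling, which already yields \eqref{eq:condcoff}, $a_7\ge 0$ and $a_0\ge a_9\ge a_{10}$ for free), the \emph{only} remaining inequality is $a_0\le a_9+a_{10}$, i.e.\ $L\cdot(E_8+E_9+E_{10})\le L\cdot D$, where $3D\sim E_1+\cdots+E_{10}$. The suitable root is $r=D-E_8-E_9-E_{10}$: its reflection fixes $E_1,\ldots,E_7$ and replaces $E_8,E_9,E_{10}$ by $E_{9,10},E_{8,10},E_{8,9}$, which are effective by Lemma~\ref{lemma:ceraprima}, so one stays inside the set of effective isotropic $10$-sequences, and the total degree $\sum_i L\cdot E_i$ (a nonnegative integer, since $L$ and all $E_i$ lie in the closure of the same half of the positive cone) drops by $3\left(L\cdot(E_8+E_9+E_{10})-L\cdot D\right)>0$. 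This forces the invariant to be the \emph{sum}: your lexicographic order on the increasingly sorted tuple need not decrease under this reflection (a sum drop such as $(1,5,6)\rightarrow(2,3,4)$ is a lexicographic \emph{increase}), so minimality in your order yields no contradiction and the argument as written does not close. Note also that the $W$-transitivity you invoke is unnecessary for existence: one minimizes directly over all effective isotropic $10$-sequences.

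\emph{Uniqueness and $\varepsilon_L$.} ``Conditions \eqref{eq:condcoff}--\eqref{eq:condcoff'} cut out a fundamental domain'' is the statement to be proved, not a formality. A complete argument needs: (a) that the region is the closed chamber of the simple system $\rho_i=E_{i+1}-E_i$ ($1\le i\le 9$), $\rho_{10}=D-E_8-E_9-E_{10}$, whose Gram matrix one checks is the $E_{10}$ (i.e.\ $T_{2,3,7}$) diagram, so that these roots are an isometric copy of the standard simple system; (b) that two effective sequences underlying two presentations of $L$ are related by an element of $W$ --- the isometry comes cheaply from your basis claim (map basis to basis; it preserves the positive cone by effectivity), but one still needs the structural fact that cone-preserving isometries of the Enriques lattice all lie in $W$ (the $T_{2,3,7}$ diagram has no symmetries); and (c) Tits' theorem that a closed chamber meets each $W$-orbit in at most one point. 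None of this is in your write-up. Finally, the $\varepsilon_L$ bookkeeping is incorrect as stated: for a \emph{fixed} choice of effective lifts it is not true that $L\sim M+K_S$ ``exactly when'' $L+K_S$ is $2$-divisible. When some $a_i$ is odd, both alternatives occur depending on the lifts, and one must use the freedom of replacing a half-fibre $E_i$ by $E_i+K_S$ (also effective) to force $L\sim M$, consistently with $\varepsilon_L=0$ in \eqref{eq:defeps}. This adjustment is more delicate than it looks, because changing $E_i$ also changes $D$ and hence $E_{9,10}$: the induced change in $M$ is by $(a_i+a_0)K_S$ for $i\le 7$, by $a_0K_S$ for $i=8$, and by $a_9K_S$ or $a_{10}K_S$ for $i=9,10$, and one must verify that a toggle by exactly $K_S$ is always available when some coefficient is odd. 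Only when all coefficients are even is $M$ independent of these choices, and only then is the dichotomy decided by $2$-divisibility of $L+K_S$. In summary, the architecture is right and every gap is fillable, but the existence descent, the chamber/transitivity input for uniqueness, and the $\varepsilon_L$ step all require the above repairs.
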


\begin{remark}
  Although the coefficients $a_i$ are unique, the isotropic $10$-sequence in Proposition \ref{prop:sid} is not unique, not even up to numerical equivalence or permutation, and nor is the presentation \eqref{eq:scrivoL}. See \cite[Rem. 5.6]{kn-JMPA}.
\end{remark}

\begin{defn}(\cite[Def. 5.1, 5.8]{kn-JMPA}) \label{def:fund}
Let $L$ be any effective line bundle on an Enriques surface $S$ such that $L^2>0$. A decomposition of the form \eqref{eq:scrivoL} with coefficients satisfying  \eqref{eq:condcoff}, \eqref{eq:condcoff'} and \eqref{eq:defeps}  is called a {\em fundamental presentation of $L$}. The coefficients $a_i=a_i(L)$, $i \in\{0,1,\ldots,7,9,10\}$ and $\varepsilon_L$ appearing in any
  fundamental presentation  are called {\em fundamental coefficients of $L$} or {\it of $(S,L)$}. 
  \end{defn}

\begin{remark} \label{rem:eps}
By \cite[Lemma 4.8]{cdgk} (or \cite[Thm. 1.3(f) and
Prop. 5.5]{kn-JMPA}) a line bundle $L$ is $2$-divisible in $\Num (S)$
if and only if all $a_i=a_i(L)$ are even, $i \in\{0,1,\ldots,7,9,10\}$. In particular, by \eqref{eq:defeps} or \cite[Cor. 4.7]{cdgk}, the number $\varepsilon=\varepsilon_L$ satisfies
  \begin{equation} \label{eq:condeps}
    \varepsilon= \begin{cases} 0, & \mbox{if some $a_i$ is odd,} \\
0 \; \mbox{or} \; 1, & \mbox{if all $a_i$ are even.} 
\end{cases}
\end{equation}
This means that any $11$-tuple $(a_0,a_1,\ldots,a_7,a_9,a_{10},\varepsilon)$ occuring as fundamental coefficients satisfies the conditions \eqref{eq:condcoff}, \eqref{eq:condcoff'} and \eqref{eq:condeps}. Conversely, for any such $11$-tuple we can choose any isotropic $10$-sequence on any Enriques surface and write down the line bundle \eqref{eq:scrivoL} having this $11$-tuple as fundamental coefficients. 
\end{remark}

For any integer $g \geq 2$, let 
$\E_{g}$ denote the moduli
space of complex polarized 
Enriques surfaces $(S,L)$ 
of genus $g$, which is a quasi-projective variety by  \cite[Thm. 1.13]{Vie}. Its irreducible components are determined by the fundamental coefficients, by the following:

\begin{thm} {\rm (\cite[Thm. 5.9]{kn-JMPA})} \label{thm:fundcoef}
Given an irreducible component $\E$ of $\E_{g}$,  all pairs $(S,L)$ in $\E$ have the same  fundamental coefficients.  Different components correspond to different fundamental coefficients. \end{thm}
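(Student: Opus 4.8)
The plan is to recognise the fundamental coefficients as a complete invariant of the deformation class of a polarized Enriques surface, and to prove both assertions by transporting the problem to the Enriques lattice $\Lambda:=\Num(S)\cong U\oplus E_8(-1)$ via the Torelli theorem. Write $\Gamma:=O^+(\Lambda)$ for the group of isometries preserving the positive cone, and $W\subset\Gamma$ for its Weyl subgroup generated by reflections in $(-2)$-classes. The two assertions then read: (1) the tuple $(a_0,a_1,\ldots,a_7,a_9,a_{10},\varepsilon_L)$ is constant in flat families of polarized Enriques surfaces; and (2) two polarized surfaces lie in the same component of $\E_g$ if and only if they have the same fundamental coefficients. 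I would prove (1) first and then reduce (2) to a lattice-theoretic transitivity statement.

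For (1): given a connected flat family $\pi\colon\mathcal S\to B$ of Enriques surfaces carrying a relative polarization $\mathcal L$, the local system $R^2\pi_*\ZZ$ modulo torsion is a local system of lattices, each fibre isometric to $\Lambda$, and the class $[\mathcal L_b]\in\Num(S_b)$ is a flat, monodromy--invariant section, since $\mathcal L$ is globally defined. By Proposition \ref{prop:sid} the fundamental coefficients of $\mathcal L_b$ are uniquely determined by the pair $(\Num(S_b),[\mathcal L_b])$ and are integers; being discrete invariants of a locally constant datum, they cannot jump, hence are constant on $B$. The torsion refinement $\varepsilon_L$ is handled in the same way: $K_{S_b}$ is a flat section of the torsion subgroup of $R^2\pi_*\ZZ$, so whether $\mathcal L_b+K_{S_b}$ is $2$--divisible in $\Pic(S_b)$ is a deformation--invariant condition by \eqref{eq:defeps}. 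This yields the first assertion, and in particular the ``different components have different coefficients'' half of the second.

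For the converse in (2) I would invoke surjectivity of the period map together with the (refined) Torelli theorem for Enriques surfaces: two polarized Enriques surfaces are deformation equivalent precisely when there is an isometry of $\Lambda$ carrying one polarization to the other while respecting the ample cone, that is, precisely when their polarizations lie in a common $\Gamma$--orbit of ample classes of square $2g-2$ (at the level of $\Pic$ one must also match the $K_S$--torsion datum recorded by $\varepsilon_L$). It therefore suffices to show that two ample classes with equal fundamental coefficients lie in a single $\Gamma$--orbit. Given fundamental presentations $h_1\sim\sum_i a_iE_i+a_0E_{9,10}+\varepsilon K_S$ and $h_2\sim\sum_i a_iE_i'+a_0E_{9,10}'+\varepsilon K_S$ with the same coefficients (here $\varepsilon$ is the common value of $\varepsilon_{h_1}$ and $\varepsilon_{h_2}$), it is enough to know that $W$ acts transitively on effective isotropic $10$--sequences; this is classical (cf.\ Lemma \ref{lemma:ceraprima} and \cite{cd}). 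The resulting isometry matches the two sequences, hence sends $h_1$ to $h_2$, so they define the same component.

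The main obstacle is this final transitivity step together with the bookkeeping of effectivity and torsion. One must verify that the abstract isometry produced from the lattice data can be chosen inside $\Gamma$, preserving the positive cone and sending effective isotropic classes to effective ones (which is where the normalization conditions \eqref{eq:condcoff}--\eqref{eq:condcoff'} and Remark \ref{rem:eps} enter), and one must track the distinction between $L$ and $L+K_S$ so that the correspondence is exact at the level of $\Pic(S)$ and not merely of $\Num(S)$. Making the period--theoretic input precise --- in particular the surjectivity and connectedness statements that convert deformation classes into $\Gamma$--orbits, including the $K_S$--equivariance in Namikawa's refined Torelli theorem --- is the other delicate point. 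Once these are in place, Proposition \ref{prop:sid} and Remark \ref{rem:eps} supply exactly the uniqueness and realizability needed to conclude that the components of $\E_g$ are in bijection with the admissible coefficient tuples.
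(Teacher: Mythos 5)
The paper itself contains no proof of this statement: Theorem \ref{thm:fundcoef} is quoted verbatim from \cite[Thm.~5.9]{kn-JMPA}, so your proposal has to be measured against the argument in that reference. Your first half is essentially sound: by the uniqueness part of Proposition \ref{prop:sid}, the fundamental coefficients are invariants of the pair $(\Num(S),[L])$ together with the ample cone (effectivity of an isotropic class is detected by positivity against the ample class, via Riemann--Roch), so they are discrete, locally constant quantities on any connected family, and the same holds for $\varepsilon_L$ via the torsion of $\Pic$; this gives ``different coefficients $\Rightarrow$ different components''. The lattice-theoretic inputs for your converse are also available, though you should cite them correctly: transitivity on isotropic $10$-sequences is in \cite{cd} (Lemma \ref{lemma:ceraprima} itself only produces the divisor $D$), and the fact that the monodromy of Enriques surfaces induces the full cone-preserving orthogonal group of $\Num(S)\cong U\oplus E_8(-1)$ goes back to Namikawa and Barth--Peters. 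Granting these, your argument does settle the case in which all $a_i$ are even, because there the matching of $\varepsilon$ pins down $L$ itself (exactly one of $L$, $L+K_S$ is $2$-divisible in $\Pic(S)$).

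The genuine gap is the remaining case, where some $a_i$ is odd --- precisely the case $\E_g\setminus\E_g[2]$ relevant to this paper. After applying your monodromy isometry you are left with comparing $(S,L)$ and $(S,L+K_S)$: these have the same numerical class, the same coefficients, and the same $\varepsilon=0$, and the theorem asserts they lie in one component. No isometry of $\Num(S)$, no ample-cone bookkeeping, and no Torelli or period statement about the K3 cover can address this, since $[L]=[L+K_S]$ in $\Num(S)$ and $\pi^*K_S=0$ on the cover; what is needed is that the monodromy on all of $H^2(S,\ZZ)$, including its $2$-torsion, realizes an automorphism of the form $x\mapsto x+\phi(x)K_S$ with $\phi([L])\neq 0$. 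This is not delivered by ``refined Torelli plus surjectivity of the period map''; it is a substantive statement, equivalent to connectedness, over the relevant components, of the forgetful double cover from $\E_g$ to the moduli space of \emph{numerically} polarized Enriques surfaces, and it is exactly part of what \cite[Thm.~1.1]{kn-JMPA} proves --- the present paper invokes it at the start of \S \ref{sec:proofmain} --- with tools beyond period theory (in particular the semistable degenerations reviewed in \S \ref{sec:flatlim}). As it stands, your proposal assumes this point rather than proving it, so it is incomplete precisely where the theorem has content for this paper.
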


The following technical result will be useful for our purposes:

\begin{lemma} \label{lemma:palloso}
  Let $(S,L)$ be an element of $\E_{g} \setminus \E_g[2]$. Set $a_i=a_i(L)$. Then one of the following holds:

  \begin{itemize}
  \item[(i)] There are three distinct  $k,l,m \in \{1,\ldots,7\}$ such that $a_i+a_k+a_l+a_m$ is odd for $i=9$ or $10$.
\item[(ii)] $a_0>0$ is odd and all $a_i$ for $i \neq 0$ are even.
  \item[(iii)] $a_0>0$ and all $a_i$ for $i \neq 0$ are odd.
  \end{itemize}
\end{lemma}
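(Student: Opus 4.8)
The plan is to reduce the statement to a purely combinatorial parity count on the fundamental coefficients. First I would record, via Remark \ref{rem:eps}, that the hypothesis $(S,L)\notin \E_g[2]$ means exactly that not all of the $a_i$ are even, i.e.\ at least one is odd; and I would keep in mind the two structural constraints from Proposition \ref{prop:sid}, namely $a_1\geq\cdots\geq a_7\geq 0$ and the chain $a_9+a_{10}\geq a_0\geq a_9\geq a_{10}\geq 0$. The key quantity to introduce is $p$, the number of odd entries among $a_1,\ldots,a_7$, and the whole argument is a case split according to the value of $p$.

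The main combinatorial observation is the following: for three distinct indices $k,l,m\in\{1,\ldots,7\}$, the number of odd entries among $a_k,a_l,a_m$ can be made to have either parity precisely when $1\leq p\leq 6$ (there are then enough odd and enough even entries to realize both an even and an odd count in a $3$-subset), whereas for $p=0$ this count is forced to be even and for $p=7$ it is forced to be odd. Granting this, in the range $1\leq p\leq 6$ I can choose $k,l,m$ so that the parity of $a_k+a_l+a_m$ is opposite to that of $a_9$, making $a_9+a_k+a_l+a_m$ odd; thus alternative (i) holds (with $i=9$), irrespective of the remaining coefficients.

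It then remains to treat the two extremal cases $p=0$ and $p=7$, where one brings the coefficients $a_9,a_{10}$ into play. For $p=0$ every three-term sum $a_k+a_l+a_m$ is even, so $a_i+a_k+a_l+a_m$ has the parity of $a_i$; hence if $a_9$ or $a_{10}$ is odd then alternative (i) holds, while if both are even then $a_1,\ldots,a_7,a_9,a_{10}$ are all even and non-$2$-divisibility forces $a_0$ to be odd, so $a_0\geq 1>0$ and alternative (ii) holds. Symmetrically, for $p=7$ every such sum is odd, so $a_i+a_k+a_l+a_m$ has the parity opposite to $a_i$; if $a_9$ or $a_{10}$ is even then (i) holds, while if both are odd then all $a_i$ with $i\neq 0$ are odd, and the chain $a_0\geq a_9\geq 1$ gives $a_0>0$, so alternative (iii) holds. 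These subcases exhaust $p=0$ and $p=7$, completing the case split.

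The only genuinely delicate point, which I would state and verify carefully, is the combinatorial observation about the realizable parities of the odd-count in a $3$-subset; everything else is bookkeeping of parities. A secondary point worth isolating is the repeated use of the inequality chain $a_0\geq a_9\geq a_{10}$ to upgrade ``$a_9$ odd'' to the strict positivity $a_0>0$ demanded in (ii) and (iii): this is where the normalization built into the fundamental presentation is essential, and overlooking it would leave the two degenerate cases formally incomplete.
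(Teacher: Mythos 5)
Your proof is correct and follows essentially the same parity-bookkeeping route as the paper: your key combinatorial observation (both parities of the odd-count in a $3$-subset of $\{a_1,\ldots,a_7\}$ are realizable precisely when $1\leq p\leq 6$) is exactly the fact the paper exploits, and your extremal cases $p=0$ and $p=7$ correspond to the paper's cases ``$a_9,a_{10}$ both even'' and ``$a_9,a_{10}$ both odd'' after it assumes (i) fails. The only difference is organizational --- you run a forward case split on $p$, while the paper first disposes of $a_0=0$ and then argues contrapositively --- so the mathematical content is identical.
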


\begin{proof}
  We first show that if $a_0=0$, then we end up in case (i). We have $a_9=a_{10}=0$ by condition \eqref{eq:condcoff'}. Moreover, by Remark \ref{rem:eps}, the set $I:=\{i \in \{1,\ldots,7\} \; | \; a_i \; \mbox{is odd}\}$ is nonempty.
  If $\sharp I \geq 3$, then for any distinct $k,l,m \in I$ we have that
  $a_i+a_k+a_l+a_m=a_k+a_l+a_m$ is odd for $i=9$ and $10$. If $\sharp I \leq 2$, we may pick $k \in I$ and two distinct $l,m \in \{1,\ldots,7\}\setminus I$; then again $a_i+a_k+a_l+a_m=a_k+a_l+a_m$ is odd for $i=9$ and $10$.

  We may therefore assume that $a_0>0$.

  Assume next that (i) does not hold; then we have
  \begin{equation}
    \label{eq:semprepari}
    a_i+a_k+a_l+a_m \; \; \mbox{is even for all} \; \; i \in \{9,10\} \; \; \mbox{and distinct } \;  k,l,m \in \{1,\ldots,7\}. 
  \end{equation}
  This clearly implies that $a_9$ and $a_{10}$ have the same parity.

  Assume that $a_9$ and $a_{10}$ are even. Then \eqref{eq:semprepari} implies that $a_k+a_l+a_m$ is even for all distinct $k,l,m \in \{1,\ldots,7\}$. Hence $a_i$ is even for all $i\in \{1,\ldots,7\}$. 
  Then $a_0$ is odd by Remark \ref{rem:eps} and we end up in case (ii).

  Assume that $a_9$ and $a_{10}$ are odd. Then \eqref{eq:semprepari} implies that $a_k+a_l+a_m$ is odd for all distinct $k,l,m \in \{1,\ldots,7\}$.  Hence $a_i$ is odd for all $i\in \{1,\ldots,7\}$, yielding case (iii).
\end{proof}

\section{Isotropic $10$-sequences on members of $\D$} \label{sec:speciso}
The notions of isotropic divisors and isotropic $10$-sequences can be extended in the obvious way to all members of $\D$.
Referring to \cite[\S 3]{kn-JMPA} for more details, we will in Example \ref{ex:isoII} below construct one such 10--sequence that we will use in the proof of Theorem \ref {thm:main} in  the next section.

Recall that  we have the points  $y_1,\ldots,y_9 \in T$, which are the blown up points on either $R$ or $P$. We will now assume that $y_1,\ldots,y_9$ are distinct, though the case of coinciding points can be treated similarly. Denote by $\e_j$  the exceptional divisor over $y_j$, without fixing whether it lies on
$\widetilde{R}$ or $\widetilde{P}$. 

View $y_j \in T \subset P$. The linear system of lines in $P$ through $y_j$ is a pencil inducing a 
$g^1_2$ on $T$, 
 which has, by Riemann-Hurwitz, two members that  are also two fibers  of $\pi_{|T}: T \to E$. In other words, there are two fibers $\f_{\alpha_j}$ and $\f_{\alpha'_j}$ of $\pi:R \to E$ such that the intersection divisors
$\f_{\alpha_j} \cap T$ and $\f_{\alpha'_j}  \cap T$ belong to this $g^1_2$.  Since $\f_{\alpha_j}-\f_{\alpha_i}$ restricts trivially to $T$, one has 
$\alpha'_j=\alpha_j \+\eta$ (see Remark \ref {rem:ingcro10}). In particular, there are two uniquely defined points $\alpha_j$ and $\alpha_j \+ \eta$ on $E$ such that the pairs
\begin{eqnarray*}
  (\f_{\alpha_j}+\e_j,\ell) \; \; \mbox{and} \; \; (\f_{\alpha_j\+\eta}+\e_j,\ell),\; \; \mbox{if} \; \; \e_j \subset \widetilde{R}, \\
(\f_{\alpha_j},\ell-\e_j) \; \; \mbox{and} \; \; (\f_{\alpha_j\+\eta},\ell-\e_j),\; \; \mbox{if} \; \; \e_j \subset \widetilde{P},  
\end{eqnarray*}
define  distinct numerically equivalent  Cartier divisors on $X:=\widetilde{R} \cup_T \widetilde{P}$.  Hence   their difference is $K_{X}$   (see again Remark \ref {rem:ingcro10}).

Similarly, for four distinct (general) $y_i,y_j,y_k,y_l \in T$, the linear system of plane conics through $y_i,y_j,y_k,y_l$ is again a pencil inducing a 
$g^1_2$ on $T$. As above, there are  two fibers $\f_{\alpha_{ijkl}}$ and $\f_{\alpha_{ijkl} \+ \eta}$ of $\pi:R \to E$ such that the divisors 
$\f_{\alpha_{ijkl}} \cap T$ and $\f_{\alpha_{ijkl} \+\eta}  \cap T$
belong to this $g^1_2$.
In particular, the pairs
\begin{eqnarray*}
  (\f_{\alpha_{ijkl}}+\e_i+\e_j+\e_k+\e_l,2\ell) \; \; \mbox{and} \; \; (\f_{\alpha_{ijkl}\+\eta}+\e_i+\e_j+\e_k+\e_l,2\ell),\; \; \mbox{if} \; \; \e_i,\e_j,\e_k,\e_l \subset \widetilde{R}, \\
(\f_{\alpha_{ijkl}},2\ell-\e_i-\e_j-\e_k-\e_l) \; \; \mbox{and} \; \; (\f_{\alpha_{ijkl}\+\eta},2\ell-\e_i-\e_j-\e_k-\e_l),\; \; \mbox{if} \; \; \e_i,\e_j,\e_k,\e_l  \subset \widetilde{P},  
\end{eqnarray*}
together with similar pairs when $\e_i,\e_j,\e_k,\e_l$ are distributed  differently, define  Cartier divisors on $X$. One may again check that their difference is $K_{X}$.

Considering instead $y_i \in T \subset R=\Sym^2(E)$ we may write
$y_i=p_i+(p_i \+ \eta)$, for some $p_i \in E$. There are two sections in $R$ passing through $y_i$, namely
$\s_{p_i}$ and $\s_{p_i \+ \eta}$, cf.\ \eqref{eq:duesez}. Thus, the pairs
\begin{eqnarray*} 
  (\s_{p_i}-\e_i,0) \; \; \mbox{and} \; \; (\s_{p_i\+ \eta}-\e_i,0), \; \; \mbox{if} \; \; \e_i \subset \widetilde{R}, \\
 (\s_{p_i},\e_i) \; \; \mbox{and} \; \; (\s_{p_i\+ \eta},\e_i), \; \; \mbox{if} \; \; \e_i \subset \widetilde{P} 
\end{eqnarray*}
define  Cartier divisors on ${X}$. Again one may check that their difference is $K_{X}$.

\begin{example} \label{ex:isoII} 
  We consider $\widetilde{R}=\Bl_{y_1,y_2,y_3,y_4}(R)$ and $\widetilde{P}=\Bl_{y_5,\ldots,y_9}(\PP^2)$. Define
\begin{eqnarray*}
  E^0_i&:=& (\s_{p_i}-\e_i,0) \; \; \mbox{for} \; \; i \in\{1,2,3,4\},\\
  E^0_i & := & (\f_{\alpha_i},\ell-\e_i) \; \; \mbox{for} \; \; i \in\{5,6,7,8\}, \\
  E^0_9&:=& (\s_{p_9},\e_9), \\
  E^0_{10} &:= &(\f_{\alpha_{5678}}, 2\ell-\e_5-\e_6-\e_7-\e_8).
\end{eqnarray*}
These are all Cartier divisors on $X=\widetilde{R} \cup_T \widetilde{P}$ by the above considerations. One may  check that $(E^0_i)^2=0$ for all $i$ and $E^0_i \cdot E^0_j=1$ for all $i \neq j$. If $X$ is a member of $\D^*$, then, arguing as in the proof of \cite[Lemma 3.6]{kn-JMPA},
one may show that
\[
 E^0_1+\cdots+E^0_{10}-\xi \sim 3(E^0_9+E^0_{10}+E^0_{9,10}), 
\]
with $\xi$ as in \eqref{eq:xi} and
\[
  E^0_{9,10} =(\f_{\alpha_9},\ell-\e_9).
\]
Thus, we may similarly to \eqref{eq:10-3} define 
\[
E^0_{i,j}:=\frac{1}{3}\left(E^0_1+\cdots+E^0_{10}-\xi\right)-E^0_i-E^0_j \; \; \mbox{for each} \; \; i \neq j.
\]
In particular, letting $y_{78}\in T\subset \PP^2$ be the third intersection point of the line through $y_7$ and $y_8$ with $T$, and writing $y_{78}=p_{78}+(p_{78}\+ \eta)\in T\subset {\rm Sym}^2(E)$ for some $p_{78}\in E$, we will use that
\[
  E^0_{5,6} \sim (\s_{p_{78}},\ell-\e_7-\e_8).
\]
Note that $E^0_{9,10}$ and $E^0_{5,6}$ are Cartier divisors on any $X$ in $\D$.
\end{example}

\begin{remark} \label{rem:exdef}
  If $X=\widetilde{R} \cup_T \widetilde{P}$ belongs to $\D^*$, then, by Theorem
\ref{thm:deform}, as it deforms to a general Enriques surface $S$, the sequence $(E^0_1,\ldots,E^0_{10})$ deforms to an isotropic $10$-sequence $(E_1,\ldots,E_{10})$ on $S$ and each $E^0_{i,j}$ deforms to $E_{i,j}$ satisfying \eqref{eq:10-3}.
\end{remark}

\section{Proof of Theorem \ref{thm:main}} \label{sec:proofmain}

We are now ready to finish the proof of our main result Theorem \ref {thm:main}. Keeping in mind that the various irreducible components of $\E_g$ are determined by the fundamental coefficients of the line bundles they parametrize (cf.\ Theorem \ref{thm:fundcoef}), the proof will be divided in various cases depending on parity properties of the fundamental coefficients. Also recall that since we assume that we are not in $\E_g[2]$, at least one of the fundamental coefficients $a_i$ is odd and $\varepsilon=0$ (cf.\ Remark \ref{rem:eps}). In particular, the components we consider contain both $(S,L)$ and $(S,L+K_S)$, so there is no need to distinguish between linear and numerical equivalence classes (cf.\ also \cite[Thm. 1.1]{kn-JMPA}).

The proof strategy will be as follows: given fundamental coefficients $a_i$, find a suitable line bundle
with the same fundamental decomposition on some limit surface in terms of the isotropic divisors in Example \ref{ex:isoII}, and apply Proposition \ref{prop:metodo1} or  \ref{prop:metodo2} (and Remark \ref{rem:exdef}). As mentioned in the beginning of \S \ref{sec:prepar}, the existence of a rigid nodal elliptic curve will prove Theorem \ref{thm:main}.

We will first treat three special cases in \S \ref{ss:genere2}-\ref{ss:alpiu6} and then  the three cases of  Lemma \ref{lemma:palloso} in \S \ref{ss:case-i}-\ref{ss:case-iii}. 

\subsection{The case $a_1=a_2=1$ and $a_i=0$ otherwise} \label{ss:genere2}
Consider the limit line bundle  $L^0= E^0_8+E^0_9$, with $E^0_8$ and $E^0_9$
as in Example \ref{ex:isoII} (where we consider a general surface $\widetilde R\cup_T\widetilde P$ in $\mathcal D^*$).  Note that the order in an isotropic 10--sequence does not matter, hence we can choose $E^0_8$ and $E^0_9$ instead of $E^0_1$ and $E^0_2$. 
This fact will be used throughout the rest of the proof, without further mention.  Then
\[ L^0|_{\widetilde{R}} \equiv \f+\s \; \; \mbox{and} \; \; L^0|_{\widetilde{P}} \sim (\ell-\e_8)+\e_9.\]
In this case there is no need to invoke Proposition \ref{prop:metodo1} or  \ref{prop:metodo2}: indeed, the linear system $|L|$ contains the following curve:

\begin{center}
\begin{tikzpicture}[scale=0.5]

\draw[green,thick] (0,-1) -- (0,5) node[above right] {$T$};

\draw[smooth,red,thick]
(0,0) to[out=180,in=-90] (-1.5,1) node[left] {$\mathfrak{f}'$} to[out=90,in=180]  (0,2);

\draw[smooth,red,thick]
(0,4) to[out=230,in=90] node[left] {$\mathfrak{s}'$}(-0.5,1);

\draw[smooth,blue,thick]
(0,0) -- (2,-1) node[right] {$\mathfrak{e}_9$};

\draw[smooth,blue,thick]
(0,2) to[out=30,in=-90] (1.5,3) node[right] {$D \in |\ell-\mathfrak{e_8}|$} to[out=90,in=0]  (0,4); 

\filldraw (0,0) circle (0.1);
\filldraw (0,2) circle (0.1);
\filldraw (0,4) circle (0.1);

\draw[red] (-2,3.5) node {$\widetilde{R}$};
\draw[blue] (3,1) node {$\widetilde{P}$};

\end{tikzpicture}
\end{center}
Here $\f'$ is the unique fiber passing through the point $y_9=\e_9\cap T$ and $D$ is the unique element of $|\ell-\e_8|$ passing through the point $y'_9$ such that $y_9+y'_9= T \cap \f'$; finally $\s'$ is one of the two sections $\s$ passing through the remaining intersection point of $D$ with $T$.

Arguing as in the proof of Proposition \ref{prop:GS}, this curve can be deformed to a one-nodal rigid elliptic curve of arithmetic genus $2$ in the linear system $|L|$ as $(\widetilde{R} \cup_T \widetilde{P},L^0)$ deforms to $(S,L)$. 

\subsection{The case $a_0=a_9$ and $a_i=0$ otherwise} \label{ss:genere3}

Both $a_0$ and $a_9$ are odd.

\subsubsection{Subcase $a_0=a_9=1$}

Consider the limit line bundle $L^0 = E^0_{9,10}+E^0_9$, with $E^0_{9,10}$ and $E^0_9$
as in Example \ref{ex:isoII} (as above). Then
\[ L^0|_{\widetilde{R}} \equiv \f +\s \; \; \mbox{and} \; \; L^0|_{\widetilde{P}} \sim (\ell-\e_9)+\e_9.\]
There is again no need to invoke Proposition \ref{prop:metodo1} or  \ref{prop:metodo2}: indeed, the linear system $|L|$ contains the following curve, constructed as in the previous case:

\begin{center}
\begin{tikzpicture}[scale=0.5]

\draw[green,thick] (0,-1) -- (0,5) node[above right] {$T$};

\draw[smooth,red,thick]
(0,0) to[out=180,in=-90] (-1.5,1) node[left] {$\mathfrak{f}'$} to[out=90,in=180]  (0,2);

\draw[smooth,red,thick]
(0,4) to[out=230,in=90] node[left] {$\mathfrak{s}'$}(-0.5,1);

\draw[smooth,blue,thick]
(0,0) --  (1.5,2.5);

\draw[smooth,blue,thick]
(0,2) to[out=-30,in=-90] (2.5,2.5) node[right] {$D \in |\ell-\mathfrak{e_9}|$} to[out=90,in=0]  (0,4); 

\filldraw (0,0) circle (0.1);
\filldraw (0,2) circle (0.1);
\filldraw (0,4) circle (0.1);

\draw[red] (-2,3.5) node {$\widetilde{R}$};
\draw[blue] (3,0.5) node {$\widetilde{P}$};
\draw[blue] (0.8,0.5) node {$\mathfrak{e}_9$};

\end{tikzpicture}
\end{center}
Arguing  again as in the proof of Proposition \ref{prop:GS}, this curve can be deformed to a rigid elliptic  two-nodal curve of arithmetic genus $3$ in the linear system $|L|$ as $(\widetilde{R} \cup_T \widetilde{P},L^0)$ deforms to $(S,L)$.

\subsubsection{Subcase $a_0=a_9 \geq 3$}

Consider the limit line bundle $L^0 = a_0(E^0_{5,6}+E^0_5)$, with $E^0_{5,6}$ and $E^0_5$
as in Example \ref{ex:isoII}. Then
\[ L':=L^0|_{\widetilde{R}} \equiv a_0\s +a_0\f \; \; \mbox{and} \; \; L'':=L^0|_{\widetilde{P}} \sim a_0(\ell-\e_7-\e_8)+a_0(\ell-\e_5)=a_0(2\ell-\e_5-\e_7-\e_8).\]

We see that we might as well substitute $\widetilde{R}$ with $R$ and $\widetilde{P}$ with $P'=\Bl_{y_5,y_7,y_8}(P)$ and consider $L^0=(L',L'')$ as a line bundle on $R \cup_T P'$. We apply Proposition \ref{prop:metodo1}
with $k=0$, $s=0$ and
$t=3$.

Clearly  $L'$ satisfies condition $(\star)$ and is odd and 
$L''$ is big and nef. Moreover, $\e_i \cdot L''>0$ for $i \in \{5,7,8\}$.
The conditions of Proposition \ref{prop:metodo1} are satisfied and we are done.

\subsection{The cases $a_7=a_9=a_{10}=a_0=0$} \label{ss:alpiu6}
At least one of the $a_i$ is odd. Pick the minimal such and call it $c_1$.  Reordering the remaining $a_i$s, we have that a  limit line bundle is of type
\[ L^0 \equiv c_1E^0_1+\sum_{i=5}^8c_iE^0_i+c_{10}E^0_{10}, \]
with the $E^0_i$ as in Example \ref{ex:isoII}, where $c_1$ is the minimal odd coefficient and we may also assume that $c_5>0$. Then
\begin{eqnarray*}
  L':=L^0|_{\widetilde{R}} & \equiv & c_1(\s-\e_1)+\sum_{i=5}^8c_i\f+c_{10}\f=c_1\s+(c_5+c_6+c_7+c_8+c_{10})\f-c_1\e_1, \\
  L'':=L^0|_{\widetilde{P}} & \sim & \sum_{i=5}^8c_i(\ell-\e_i)+c_{10}(2\ell-\e_5-\e_6-\e_7-\e_8). 
\end{eqnarray*}
We see that we might as well substitute $\widetilde{R}$ with $R'=\Bl_{y_1}(R)$ and $\widetilde{P}$ with $P'=\Bl_{y_5,y_6,y_7,y_8}(P)$ and consider $L^0=(L',L'')$ as a line bundle on $R' \cup_T P'$. We apply Proposition \ref{prop:metodo1}
with $s=1$, $t=4$ and $k=0$. Conditions (i)--(iii) of $(\star)$ are verified by $L'$; condition (iv) is equivalent to $c_5+c_6+c_7+c_8+c_{10}\geq 2$, which is verified unless $c_5=1$ and $c_6=c_7=c_8=c_{10}=0$. Since $c_1$ was assumed to be a minimal odd fundamental coefficient, we must have $c_1=1$ as well. This case is the one treated in \S \ref{ss:genere2}. We may therefore assume that $L'$ satisfies $(\star)$. 
One readily checks that $L'$ is odd (since $c_1$ is odd) and that $L''$ is big and nef (all components have square zero, intersect and can be represented by irreducible curves). This verifies conditions (i)--(iv) in Proposition \ref{prop:metodo1}. Since
for instance $\e_5 \cdot L''=c_5+c_{10} \geq c_5>0$, also condition (v) therein is satisfied.  Hence we are done by Proposition \ref{prop:metodo1}.

\subsection{Case where there are three distinct  $k,l,m \in \{1,\ldots,7\}$ such that $a_i+a_k+a_l+a_m$ is odd for $i=9$ or $10$ (case (i) in Lemma \ref{lemma:palloso})} \label{ss:case-i}

Note that the cases among these with $a_0=0$ (whence also $a_9=a_{10}=0$) and $a_7=0$  fall into the cases treated in \S \ref{ss:alpiu6}. We can therefore assume that
\begin{equation}
  \label{eq:alpiu6}
  a_7 >0 \; \; \mbox{(whence $a_i >0$ for all $i \in \{1,\ldots,7\}$), if $a_0=0$.}
\end{equation}
Similarly, the cases among these with $a_0=a_9$ and all remaining $a_i=0$ 
 fall into the cases treated in \S \ref{ss:genere3}. We can therefore assume that
\begin{equation}
  \label{eq:alpiu2}
  a_0 \neq a_9, \; \; \mbox{if} \; \; a_i =0 \; \; \mbox{for all}
  \; \; i \in \{1,\ldots,7,10\}.
\end{equation}

 We know that for $i=9$ or $10$, we can find indices $k,l,m$ so that $a_i+ a_k+a_l+a_m$ is odd. In this case we take $k,l,m$ so that $a_k+a_l+a_m$ is minimal with respect to this property. If we can do this for both  $i=9$ and $10$ (with possibly different triples of indices  $k,l,m$),  we will pick $i \in \{9,10\}$ so that $a_i+ a_k+a_l+a_m$ is minimal.
 We rename these coefficients $a_i,a_k,a_l,a_m$ as $c_9,c_2,c_3,c_4$,  making sure that 
 \begin{equation}\label{eq:c2} 
c_2 \geq c_3 \geq c_4,
\end{equation} 
set $c_0=a_0$,
 $c_{10}=\begin{cases} a_{10}, & \mbox{if} \; i=9 \\
  a_{9}, & \mbox{if} \; i=10
\end{cases}$,
and rename the remaining $a_i$ as $c_5,c_6,c_7,c_8$  in such a way that 
\begin{equation} \label{eq:c3}
 c_5 \geq c_6 \geq c_7 \geq c_8.
\end{equation} 
We thus have a limit line bundle
\[ L^0 \equiv c_0E_{9,10}^0+c_9E^0_9+c_{10}E^0_{10}+\sum_{i=2}^8c_iE^0_i, \]
with the $E^0_i$ and $E^0_{9,10}$ as in Example \ref{ex:isoII}, where,  besides \eqref {eq:c2} and \eqref {eq:c3}, one has 
\begin{eqnarray}
  \label{eq:c1}
  & c_9+c_{10} \geq c_0 \geq \max\{c_9,c_{10}\}, & \\
\label{eq:c4}& c_9+c_2+c_3+c_4 \; \; \mbox{is odd}, \\
 & \label{eq:c5} \mbox{there are no $i \in \{9,10\}$, $k,l,m \in \{2,\ldots, 8\}$ such that} \\
  \nonumber & \; \; \; \; \; \; \; \mbox{$c_i+c_k+c_l+c_m$ is odd and} & \\
 \nonumber  & \begin{cases}  c_k+c_l+c_m < c_2+c_3+c_4, \mbox{or} \\
      c_k+c_l+c_m = c_2+c_3+c_4 \; \; \mbox{and} \; \; c_i+ c_k+c_l+c_m < c_9 + c_2+c_3+c_4.
\end{cases} &
\end{eqnarray}
Furthermore, \eqref{eq:alpiu6} gives
\begin{equation}
  \label{eq:alpiu6'}
  c_i>0 \; \; \mbox{for all} \; \; i \in \{2,\ldots,8\}, \; \; \mbox{if} \; \; c_0=0,
\end{equation}
and \eqref{eq:alpiu2} yields
\begin{equation}
  \label{eq:alpiu2'}
c_0 \neq c_9 \; \; \mbox{if} \; \; c_i=0 \; \; \mbox{for all} \; \; i\in \{2,\ldots,8,10\}.
 \end{equation}

We define
\begin{eqnarray*}
  \kappa  :=  \sharp\{j \in \{2,3,4\} \; | \; c_j>0\} & \mbox{and} &
  \lambda :=  \sharp\{j \in \{5,6,7,8\} \; | \; c_j>0\}.
\end{eqnarray*}

\begin{claim} \label{cl:case-i}
  The following hold:
  \begin{itemize}
  \item[(i)]   If $c_0=0$, then $(\kappa,\lambda)=(3,4)$.
  \item[(ii)] If $\lambda \leq 2$, then $\kappa \leq 1$; moreover, $\kappa=1$ implies $c_{10} \geq 2$.
    \item[(iii)]  If $(\kappa,\lambda,c_{10})=(0,0,0)$, then $c_0 \neq c_9$.   
  \end{itemize}
\end{claim}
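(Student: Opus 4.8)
The plan is to dispatch (i) and (iii) immediately from the standing normalizations \eqref{eq:alpiu6'} and \eqref{eq:alpiu2'}, and to extract the real content, part (ii), from the minimality property \eqref{eq:c5} together with the oddness \eqref{eq:c4}. For (i), I would simply invoke \eqref{eq:alpiu6'}: if $c_0=0$ then $c_i>0$ for all $i\in\{2,\ldots,8\}$, so $c_2,c_3,c_4>0$ and $c_5,c_6,c_7,c_8>0$, giving $(\kappa,\lambda)=(3,4)$. For (iii), the hypothesis $(\kappa,\lambda,c_{10})=(0,0,0)$ is exactly the statement that $c_i=0$ for all $i\in\{2,\ldots,8,10\}$, which is the hypothesis of \eqref{eq:alpiu2'}, whence $c_0\neq c_9$.

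The substance is (ii), and the key device is that whenever $\lambda\le 2$ at least two of $c_5,c_6,c_7,c_8$ vanish, so I can always assemble a triple $\{c_j,0,0\}$ consisting of an index $j$ together with two of these vanishing indices in $\{5,6,7,8\}$; this is a genuine competing index-choice in \eqref{eq:c5} of weight $c_j$. To prove $\kappa\le 1$ I would assume $\kappa\ge 2$, so that $c_2\ge c_3>0$, and feed the triples $\{c_j,0,0\}$ for $j\in\{2,3,4\}$ into \eqref{eq:c5}. Each has weight $c_j<c_2+c_3+c_4$ (strict, since two of $c_2,c_3,c_4$ are positive), so by minimality none may be admissible; non-admissibility forces both $c_9+c_j$ and $c_{10}+c_j$ to be even, in particular $c_j\equiv c_9\pmod 2$ for every $j\in\{2,3,4\}$. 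Then $c_9+c_2+c_3+c_4\equiv 4c_9\equiv 0\pmod 2$, contradicting the oddness \eqref{eq:c4}; hence $\kappa\le 1$.

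For the ``moreover'' I would treat $\kappa=1$, so $c_3=c_4=0$ and, together with the two vanishing indices provided by $\lambda\le 2$, there are at least four vanishing coefficients among the indices $\{3,\ldots,8\}$ — enough to build an all-zero triple. First I test $\{0,0,0\}$: its weight $0<c_2$ forces, via \eqref{eq:c5}, both $c_9$ and $c_{10}$ to be even, and then $c_9+c_2\equiv 1$ from \eqref{eq:c4} makes $c_2$ odd. Next I test $\{c_2,0,0\}$ paired with the index $10$: its total $c_{10}+c_2$ is then odd (hence admissible) and its weight $c_2$ equals that of the chosen triple $\{c_2,c_3,c_4\}$, so the tie-breaking clause of \eqref{eq:c5} yields $c_9+c_2\le c_{10}+c_2$, i.e.\ $c_9\le c_{10}$. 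If $c_{10}=0$ this gives $c_9=0$, whereupon \eqref{eq:c1} forces $c_0=0$, and \eqref{eq:alpiu6'} then demands $c_3>0$, contradicting $c_3=0$. Hence $c_{10}\neq 0$, and being even, $c_{10}\ge 2$.

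I expect the only delicate point to be the bookkeeping in (ii): one must check that the constructed triples use genuinely distinct indices of $\{2,\ldots,8\}$ (this is precisely what $\lambda\le 2$ secures, by furnishing the requisite number of vanishing slots), and one must keep separate the two clauses of \eqref{eq:c5} — ``strictly smaller weight'' for the $\kappa\le 1$ step and ``equal weight, strictly smaller total'' for the $c_{10}\ge 2$ step. Once these are tracked carefully, everything reduces to the single parity identity $c_9+c_2+c_3+c_4\equiv 1$ playing against the admissibility of the cheaper triples.
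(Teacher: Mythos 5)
Your proposal is correct and takes essentially the same route as the paper: parts (i) and (iii) are read off from \eqref{eq:alpiu6'} and \eqref{eq:alpiu2'} exactly as in the text, and part (ii) is extracted from \eqref{eq:c4} and \eqref{eq:c5} by testing competing triples padded with the vanishing coefficients $c_7=c_8=0$, which is precisely the paper's device (including the tie-breaking step giving $c_{10}\geq c_9$ and then $c_{10}>0$, hence $c_{10}\geq 2$). Your only deviation is cosmetic: for $\kappa\le 1$ you run a single unified parity contradiction (if $\kappa\ge 2$, the three triples $(c_j,0,0)$ force $c_j\equiv c_9\equiv c_{10}\pmod 2$ for $j=2,3,4$, making $c_9+c_2+c_3+c_4$ even), where the paper splits the same computation into two cases (no $c_j$ even and positive; at most one odd) — a mild streamlining, not a different argument.
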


\begin{proof}
  Property (i) follows from condition \eqref{eq:alpiu6'}.

  Next assume $\lambda \leq 2$, that is, $c_7=c_8=0$. Then
properties \eqref{eq:c4} and \eqref{eq:c5} yield that $c_3=c_4=0$, that is, $\kappa \leq 1$, as we now explain. 

Indeed, if $c_4$ is even and positive we have that $c_9+c_2+c_3+c_8$ is odd and $c_2+c_3+c_8<c_2+c_3+c_4$, contradicting \eqref {eq:c5}. Similarly for the cases where $c_i$ is even and positive with $i=2,3$. From this it follows that none among $c_2,c_3, c_4$ can be even  and positive. 

If $c_3$ and $c_4$ are odd, then $c_9+c_2+c_7+c_8$ is odd and $c_2+c_7+c_8<c_2+c_3+c_4$, contradicting \eqref {eq:c5}. Similarly if $c_2$ and $c_3$ or $c_2$ and $c_4$ are odd. Hence at most one among $c_2,c_3, c_4$ is odd. 

In conclusion at least two among $c_2,c_3, c_4$ must be zero, hence $c_3=c_4=0$ by \eqref {eq:c2}, as we claimed. 

If $\kappa=1$, we have $c_2>0$ and $c_9+c_2$ is odd by \eqref{eq:c4}. Condition \eqref{eq:c5} yields that $c_9$ is even (whence $c_2$ is odd), for otherwise $c_9=c_9+c_3+c_4+c_8$ would be odd with
  $0=c_3+c_4+c_8<c_2=c_2+c_3+c_4$. For the same reason, $c_{10}$ is even, and condition \eqref{eq:c5} yields that $c_{10} \geq c_9$. By \eqref{eq:c1} and the fact that $c_0>0$ from (i), we must have $c_{10}>0$.
This proves (ii).

Finally, (iii) is a reformulation of property \eqref{eq:alpiu2'}. 
\end{proof}

Consider
\begin{eqnarray*}
  L' :=  L^0|_{\widetilde{R}} & \equiv & c_0\f+c_9\s+c_{10}\f +\sum_{i=2}^4 c_i(\s-\e_i)+\sum_{i=5}^8c_i\f \\
  & = & (c_2+c_3+c_4+c_9)\s+(c_0+c_5+c_6+c_7+c_8+c_{10})\f - \sum_{i=2}^4 c_i\e_i\\
  & = & L'_0 + \sum_{i=2}^{\kappa+1}(\f-\e_i),
\end{eqnarray*}
where
\[
  L'_0 :=(c_2+c_3+c_4+c_9)\s+(c_0+c_5+c_6+c_7+c_8+c_{10}-\kappa)\f-\sum_{i=2}^{\kappa+1} (c_i-1)\e_i
\]
and $\sum_{i=2}^{\kappa+1}(\f-\e_i)$ is the sum of $\kappa$ disjoint $(-1)$-curves.
We note that we may consider $L'$ as a line bundle on the blow--up of $R$ at $\kappa$ points. Hence we will eventually apply Proposition \ref{prop:metodo1} with $k=s=\kappa$.

\begin{claim}
  $L'_0$ verifies condition $(\star)$ and is odd.
\end{claim}

\begin{proof}
  Oddness is equivalent to condition \eqref{eq:c4}.
  Conditions (i)--(iii) of $(\star)$ are easily checked. Condition (iv) is equivalent to
  \begin{equation} \label{eq:cazziv}
    2c_0+c_9+2c_{10}+2\sum_{i=5}^8c_i -\kappa  \geq 4.
    \end{equation}
    If $c_0=0$, then $c_9=c_{10}=0$ by \eqref{eq:c1} and
    $(\kappa,\lambda)=(3,4)$ by Claim \ref{cl:case-i}(i), whence the left hand side of \eqref{eq:cazziv} equals $2\sum_{i=5}^8c_i -\kappa \geq 8 -3=5$, and we are done. Hence we may assume that $c_0>0$ for the rest of the proof.

    We note that, by \eqref{eq:c1},
    \[  2c_0+c_9+2c_{10}+2\sum_{i=5}^8c_i -\kappa \geq 3c_0+c_{10}+2\lambda-\kappa
      \geq 3+c_{10}+2\lambda-\kappa.\]
    This, together with Claim \ref{cl:case-i}(ii) tells us that \eqref{eq:cazziv} is always satisfied if $\lambda \geq 1$. Assume therefore that $\lambda=0$. Then  $\kappa \leq 1$ by Claim \ref{cl:case-i}(ii). If  $\kappa=1$, then $c_{10} \geq 2$ by Claim \ref{cl:case-i}(ii), and  \eqref{eq:cazziv} is again satisfied.
    If $\kappa=0$, we have that $c_9$ is odd by \eqref{eq:c4}. If $c_{10}>0$,  \eqref{eq:cazziv} is  satisfied. Otherwise  Claim \ref{cl:case-i}(iii) yields  $c_0 \geq 2$, whence \eqref{eq:cazziv} is again satisfied.
  \end{proof}

Consider
\begin{eqnarray*}
  L'' :=  L^0|_{\widetilde{P}} &  \sim & c_0(\ell-\e_9)+c_9\e_9+c_{10}(2\ell-\e_5-\e_6-\e_7-\e_8)+ \sum_{i=5}^8 c_i(\ell-\e_i) \\
  & = & (c_0-c_9)(\ell-\e_9)+c_9 \ell+ c_{10}(2\ell-\e_5-\e_6-\e_7-\e_8)+ \sum_{i=5}^8 c_i(\ell-\e_i). 
\end{eqnarray*}

The idea is now to apply Proposition \ref{prop:metodo1} with $k=\kappa$. 

\subsubsection{Subcase $\lambda=3,4$} We have $c_5 \geq c_6 \geq c_7>0$ by \eqref{eq:c3}. Define
\begin{eqnarray}
  \label{eq:L03}
  L_0''(3) &  := &  (c_0-c_9)(\ell-\e_9)+c_9 \ell+ c_{10}(2\ell-\e_5-\e_6-\e_7-\e_8)+   \\
 \nonumber  + \sum_{i=5}^7 (c_i-1)\hspace{-1.1cm} & & (\ell-\e_i) + c_8(\ell-\e_8) +(\ell-\e_5-\e_6)+(\ell-\e_6-\e_7)+(\ell-\e_7-\e_8), \\
 \nonumber    L_0''(2) & = & L_0''(3)+\e_6, \\
 \nonumber    L_0''(1) & = & L_0''(3)+\e_6+\e_7, \\
 \nonumber    L_0''(0) & = & L_0''(3)+\e_6+\e_7+\e_8.
  \end{eqnarray}
  Then  one may check that
\[ L'' = L_0''(\kappa)+
  \begin{cases} 0,    & \mbox{if} \; \; \kappa=0, \\
    \e_8, & \mbox{if} \; \; \kappa=1, \\
    \e_7+\e_8, & \mbox{if} \; \; \kappa=2, \\
  \e_6+\e_7+\e_8, & \mbox{if} \; \; \kappa=3. \\
\end{cases}
\]

\begin{claim} \label{cl:bigandnef}
  $L_0''(\kappa)$ is big and nef for all $\kappa \in \{0,1,2,3\}$.
\end{claim}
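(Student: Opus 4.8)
The plan is to reduce everything to the single case $\kappa=3$ and to a finite list of linear inequalities. For general points $\widetilde P=\Bl_{y_5,\dots,y_9}(\PP^2)$ is a del Pezzo surface of degree $4$ (as noted in the proof of Proposition~\ref{prop:P2}), whose Mori cone is generated by the sixteen $(-1)$-curves $\e_5,\dots,\e_9$, the $\ell-\e_i-\e_j$ for $i<j$, and the conic $2\ell-\e_5-\e_6-\e_7-\e_8-\e_9$. Hence for a class $D=d\ell-\sum_{i=5}^{9}m_i\e_i$ nefness is equivalent to
\[
m_i\geq 0\ \ \forall i,\qquad d\geq m_i+m_j\ \ \forall i<j,\qquad 2d\geq\textstyle\sum_{i=5}^{9}m_i,
\]
and, $D$ being nef on a surface, it is automatically big once $D^2=d^2-\sum m_i^2>0$. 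So the whole claim becomes a verification of these inequalities.

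First I would compute the coefficients of $L_0''(3)$ from the defining formula \eqref{eq:L03}: one finds $d=c_0+2c_{10}+c_5+c_6+c_7+c_8$, together with $m_5=c_5+c_{10}$, $m_6=c_6+c_{10}+1$, $m_7=c_7+c_{10}+1$, $m_8=c_8+c_{10}+1$ and $m_9=c_0-c_9$. The bounds $m_i\geq 0$ are clear, the only one needing input being $m_9\geq 0$, i.e.\ \eqref{eq:c1}. Among the pairwise inequalities $d\geq m_i+m_j$ the binding one is $d\geq m_6+m_7$, equivalent to $c_0+c_5+c_8\geq 2$; since $\lambda\geq 3$ gives $c_5\geq 1$, and since $c_0=0$ forces $\lambda=4$ hence $c_8\geq 1$ by Claim~\ref{cl:case-i}(i), this holds, and \eqref{eq:c3} makes all other pairs slacker. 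Finally $2d-\sum m_i=c_0+c_5+c_6+c_7+c_8+c_9-3\geq 0$ because $c_5+c_6+c_7+c_8\geq 3$. This settles nefness for $\kappa=3$.

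Next I would dispose of $\kappa\in\{0,1,2\}$ at once. By \eqref{eq:L03} the class $L_0''(\kappa)$ is $L_0''(3)$ plus a sum of the exceptional divisors $\e_6,\e_7,\e_8$, which leaves $d$ unchanged and merely lowers the corresponding $m_i$ by $1$ (to $c_6+c_{10}$, $c_7+c_{10}$, $c_8+c_{10}$, all still $\geq 0$). Lowering the $m_i$ only relaxes the pairwise and conic inequalities, so nefness propagates from $\kappa=3$ to every $\kappa$; and since it strictly increases $d^2-\sum m_i^2$, it is enough to prove bigness for $\kappa=3$. For that I would expand $L_0''(3)^2$ and drop the manifestly nonnegative pieces $4c_0c_{10}$, $c_{10}\bigl(2(c_5+c_6+c_7+c_8)-6\bigr)$, $c_9(2c_0-c_9)$ and $2\sum_{5\le i<j\le 8}c_ic_j-2(c_6+c_7+c_8)$ (the last being nonnegative because $c_5,c_6,c_7\geq 1$), which yields the clean bound $L_0''(3)^2\geq 2c_0(c_5+c_6+c_7+c_8)-3$. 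If $c_0\geq 1$ this is $\geq 2\cdot 3-3=3$; if $c_0=0$ then $c_9=c_{10}=0$ and $\lambda=4$, so $c_5,\dots,c_8\geq 1$ and a direct evaluation gives $L_0''(3)^2=2\sum_{i<j}c_ic_j-2(c_6+c_7+c_8)-3\geq 12-6-3=3$. In all cases $L_0''(3)^2>0$.

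I expect the bigness estimate to be the only genuinely delicate point: the inequality $L_0''(3)^2>0$ is tight (value $3$) exactly at the corner $c_0=0,\ c_5=\dots=c_8=1$, and it would fail without the constraint $c_0\geq c_{10}$ from \eqref{eq:c1} or without the implication $c_0=0\Rightarrow\lambda=4$ of Claim~\ref{cl:case-i}(i). Thus the two auxiliary facts \eqref{eq:c1} and Claim~\ref{cl:case-i}(i) are doing real work here, while the remaining content is just the bookkeeping of linear inequalities on a del Pezzo surface.
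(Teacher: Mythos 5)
Your proof is correct, but it takes a genuinely different route from the paper's. The paper never invokes the Mori cone of the quartic del Pezzo surface: it reads nefness directly off the defining decomposition \eqref{eq:L03}, observing that every summand there is a nef class except the three $(-1)$-classes $\ell-\e_5-\e_6$, $\ell-\e_6-\e_7$, $\ell-\e_7-\e_8$, so that nefness only has to be tested against these three irreducible curves. In your coordinates those three tests are exactly your pairs $(5,6)$, $(6,7)$, $(7,8)$, with the same numbers $c_0+c_7+c_8-1$, $c_0+c_5+c_8-2$, $c_0+c_5+c_6-2$, and the only use of Claim~\ref{cl:case-i}(i) is the same in both arguments (either $c_0>0$ or $c_8>0$). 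The paper's reduction from $\kappa<3$ to $\kappa=3$ is also the same as yours ($\e_i\cdot L_0''(3)>0$ for $i\in\{6,7,8\}$), but bigness is dismissed as ``easily checked''. What your version buys is a complete, mechanical verification, in particular an honest proof of bigness via the bound $L_0''(3)^2\geq 2c_0(c_5+c_6+c_7+c_8)-3$ (which I checked and which is correct), at the cost of invoking the full list of sixteen $(-1)$-curves and hence the fact that five general points of $T$ are general in $\PP^2$ (justified in the proof of Proposition~\ref{prop:P2}). What the paper's version buys is brevity: the decomposition argument makes thirteen of your sixteen inequalities automatic, since a $(-1)$-curve not occurring in the decomposition meets a sum of nef and effective classes distinct from it nonnegatively.

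One step of yours is stated incorrectly, though the error is harmless. You claim that $d\geq m_6+m_7$ is the binding pairwise inequality and that \eqref{eq:c3} makes all other pairs slacker. This is false in general: $m_5=c_5+c_{10}$ carries no $+1$, but $c_5$ can be arbitrarily larger than $c_6,c_7,c_8$, so as soon as $c_5\geq c_7+2$ one has $m_5>m_7$ and the pair $(5,6)$ is more binding than $(6,7)$. What saves the argument is that the pairs involving $\e_5$ hold for a different reason: $d-m_5-m_6=c_0+c_7+c_8-1\geq 0$ (and a fortiori for $(5,7)$ and $(5,8)$) because $c_7>0$ in this subcase by \eqref{eq:c3} -- this is precisely the first of the paper's three checks -- while the pairs involving $\e_9$ really are slacker, since $m_9=c_0-c_9\leq c_{10}<m_8\leq m_7\leq m_6$ by \eqref{eq:c1}. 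So your finite list of inequalities does close, but the monotonicity claim you use to dismiss nine of the ten pairs should be replaced by this short case division.
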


\begin{proof}
  Since $\e_i \cdot L_0''(3)>0$, for $i\in \{6,7,8\}$, it suffices to verify that
$L_0''(3)$ is big and nef. All divisors in the sum \eqref{eq:L03} are  nef, except for the last three,  which are irreducible. Nefness follows if the latter three intersect $L_0''(3)$ nonnegatively. We have
\begin{eqnarray*}
  L_0''(3) \cdot (\ell-\e_5-\e_6) & = & c_0+(c_7-1)+c_8-1+0+1 \geq 0, \\
  L_0''(3) \cdot (\ell-\e_7-\e_8) & = & c_0+(c_5-1)+(c_6-1)+1+0-1 \geq 0. 
\end{eqnarray*}
Finally
\[ L_0''(3) \cdot (\ell-\e_6-\e_7)  =  c_0+(c_5-1)+c_8+0-1+0 \geq c_0+c_8-1, \]
which is nonnegative, since by Claim \ref{cl:case-i}(i), either $c_0>0$, or $\lambda=4$ (whence $c_8>0$). This proves nefness. Bigness is easily checked.
\end{proof}

We apply Proposition \ref{prop:metodo1} with $k=s=\kappa$, $t=5$
and $L''_0=L_0(\kappa)$. What is left to be checked is condition (v). The set of additional $t-k=5-\kappa$ disjoint $(-1)$-curves on $\widetilde{P}$
is 
\[ 
  \begin{cases} \e_5,\e_6,\e_7,\e_8,\e_9,    & \mbox{if} \; \; \kappa=0, \\
    \e_5,\e_6,\e_7,\e_9, & \mbox{if} \; \; \kappa=1, \\
    \e_5,\e_6,\e_9 & \mbox{if} \; \; \kappa=2, \\
  \e_5,\e_9 & \mbox{if} \; \; \kappa=3, \\
\end{cases}
\]
and  $\e_5 \cdot L''(\kappa)  =  c_{10}+c_5 \geq c_5>0$, as $\lambda>0$, verifying condition (v) in Proposition \ref{prop:metodo1}.

\subsubsection{Subcase $\lambda \leq 2$}
We have $c_0>0$, $c_7=c_8=0$, and $\kappa \leq 1$ by Claim \ref{cl:case-i}(i)--(ii). 

If $\kappa =0$ we apply Proposition \ref{prop:metodo1} with $k=s=0$ and $t=5$.
Condition (v) therein is satisfied, as for instance $\e_5 \cdot L''=c_{10}+c_5$
and  $\e_9 \cdot L''=c_0-c_9$; indeed, if $\e_5 \cdot L''=0$, then $c_{10}=c_5=0$, whence $\lambda=0$, so Claim \ref{cl:case-i}(iii) yields $\e_9 \cdot L''>0$.

If $\kappa=1$, then $c_{10} \geq 2$ by Claim \ref{cl:case-i}(ii). Write $L''=L''_0+ \e_9$, with
\begin{eqnarray*} L''_0:=
  (c_0-c_9)(\ell-\e_9)+c_9 \ell+ (c_{10}-1)(2\ell-\e_5-\e_6-\e_7-\e_8)+ \\
  \sum_{i=5}^6 c_i(\ell-\e_i) +(2\ell-\e_5-\e_6-\e_7-\e_8-\e_9),
  \end{eqnarray*}
which is big and nef, since the only term with negative square is the last one, and one checks that $(2\ell-\e_5-\e_6-\e_7-\e_8-\e_9) \cdot L''_0=c_0+c_9+c_5+c_6-1 \geq 0$. We apply Proposition \ref{prop:metodo1} with $k=s=1$ and $t=5$.
Condition (v) therein is satisfied, as for instance $\e_5 \cdot L''=c_{10}+c_5 \geq c_{10} \geq 2$.

\subsection{Case where $a_0>0$ is odd and all remaining $a_i$ are even (case (ii) in Lemma \ref{lemma:palloso})} \label{ss:case-ii}

Since $a_9,a_{10}$ are even and $a_0$ is odd, we have
\[ a_9+a_{10}>a_0 >a_9 \geq a_{10},\]
which implies $a_0 \geq 3$ and $a_9,a_{10} \geq 2$. Rearranging indices,
we have a limit line bundle
\[ L^0 \equiv c_0E_{5,6}^0+\sum_{i=1}^8c_iE^0_i+c_{10}E^0_{10}, \]
with the $E^0_i$ and $E^0_{5,6}$ as in Example \ref{ex:isoII}, where
\begin{eqnarray}
  \label{eq:cc1}
  & c_5+c_{6} > c_0 > c_5 \geq c_6\geq 2, \; \; \mbox{$c_0$ is odd, $c_5,c_6$ are even}, & \\
  \label{eq:cc2} & c_4 \leq c_3 \leq c_2 \leq c_1 \leq c_7 \leq c_8 \leq c_{10}, \; \; \mbox{all even}. &
\end{eqnarray}
We define
\[
  \kappa  :=  \sharp\{j \in \{1,2,3,4\} \; | \; c_j>0\}.
\]

Consider
\begin{eqnarray*}
  L' :=  L^0|_{\widetilde{R}} & \equiv & c_0\s+\sum_{i=1}^4 c_i(\s-\e_i)+\sum_{i=5}^8c_i\f+c_{10}\f \\
                              & = & (c_0+c_1+c_2+c_3+c_4)\s+\left(c_5+c_6+c_7+c_8+c_{10}\right)\f - \sum_{i=1}^{\kappa} c_i\e_i \\
  & = & L'_0 + \sum_{i=2}^{\kappa}(\f-\e_i),
\end{eqnarray*}
where
\[
  L'_0 :=(c_0+c_1+c_2+c_3+c_4)\s+
  \left(c_5+c_6+c_7+c_8+c_{10}-\kappa+1\right)\f   -c_1\e_1 -\sum_{i=2}^{\kappa} (c_i-1)\e_i 
\]
and $\sum_{i=2}^{\kappa}(\f-\e_i)$ is the sum of $\max\{0,\kappa-1\}$ disjoint $(-1)$-curves.
We note that we may consider $L'$ as a line bundle on the blow--up of $R$ at $\kappa$ points. Hence we will eventually apply Proposition \ref{prop:metodo1} with $s=\kappa$ and $k=\max\{0,\kappa-1\}$.

\begin{claim}
  $L'_0$ verifies condition $(\star)$ and is odd.
\end{claim}

\begin{proof}
  Oddness follows since $c_0+c_1+c_2+c_3+c_4$ is odd by our assumptions \eqref{eq:cc1} and \eqref{eq:cc2}.
 Conditions (i)--(iii) of $(\star)$ are easily checked. Condition (iv) is equivalent to 
\[ c_0+2(c_5+c_6+c_7+c_8+c_{10}) \geq \kappa+2.\]
This is verified since, by \eqref{eq:cc1},
the left hand side is $\geq c_0+2c_5+2c_6 \geq 3+4+4=11$.
  \end{proof}

We have
\[
  L'' :=  L^0|_{\widetilde{P}}   \sim  c_0(\ell-\e_7-\e_8)+ \sum_{i=5}^8 c_i(\ell-\e_i) +c_{10}(2\ell-\e_5-\e_6-\e_7-\e_8).
 \]
We can view $L''$ as a line bundle on $\Bl_{y_5,y_6,y_7,y_8}(P)$.
The idea is now to apply Proposition \ref{prop:metodo1} with $k=\max\{0,\kappa-1\}$, $s=\kappa$ and $t=4$. 

\subsubsection{Subcase $c_7=0$}
By \eqref{eq:cc2} we have $c_1=c_2=c_3=c_4=0$, whence $\kappa=0$. We apply Proposition \ref{prop:metodo1} with $s=k=0$ and $t=4$. Condition (v) therein is satisfied, as for instance $\e_5 \cdot L''=c_5+c_{10} \geq c_{5} >0$ by \eqref{eq:cc1}.

\subsubsection{Subcase $c_7>0$}
By \eqref{eq:cc2} we have $c_7,c_8,c_{10} \geq 2$.

Define
\begin{eqnarray}
  \label{eq:sumcc}
  L_0''(3) :=  c_0(\ell-\e_7-\e_8)+ c_5(\ell-\e_5)
+\sum_{i=6}^8 (c_i-1)(\ell-\e_i) +&
  \\
 \nonumber    +c_{10}(2\ell-\e_5-\e_6-\e_7-\e_8) +(\ell-\e_6-\e_7)+(\ell-\e_7-\e_5)+
(\ell-\e_8-\e_6),&
  \end{eqnarray} 
  \begin{eqnarray*}
    L_0''(2) & = & L''(3)+\e_5, \\
    L_0''(1) & = & L''(3)+\e_5+\e_6, \\
    L_0''(0) & = & L''(3)+\e_5+\e_6+\e_7.
  \end{eqnarray*}
  Then one may check that, for $j \in \{0,1,2,3\}$:
\[ L'' = L_0''(j)+
  \begin{cases} 0,    & \mbox{if} \; \; j=0, \\
    \e_7, & \mbox{if} \; \; j=1, \\
    \e_6+\e_7, & \mbox{if} \; \; j=2, \\
  \e_5+\e_6+\e_7, & \mbox{if} \; \; j=3. \\
\end{cases}
\]

\begin{claim} \label{cl:bigandnef2}
  $L_0''(j)$ is big and nef for all $j \in \{0,1,2,3\}$.
\end{claim}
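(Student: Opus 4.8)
The plan is to imitate the proof of Claim~\ref{cl:bigandnef} and reduce the whole statement to the single case $j=3$. Recall the relations displayed just above, $L_0''(2)=L_0''(3)+\e_5$, $L_0''(1)=L_0''(3)+\e_5+\e_6$ and $L_0''(0)=L_0''(3)+\e_5+\e_6+\e_7$, so every $L_0''(j)$ is $L_0''(3)$ plus a reduced sum of the pairwise disjoint exceptional curves $\e_5,\e_6,\e_7$. First I would record that $L_0''(3)\cdot\e_i>0$ for $i\in\{5,6,7\}$, which is immediate from \eqref{eq:sumcc}, since $\e_i\cdot(\ell-\e_i)=1$ and each $\ell-\e_i$ enters with positive coefficient (with further nonnegative contributions from the conic pencil and the lines). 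Granting that $L_0''(3)$ is big and nef, the passage to $L_0''(j)$ is then formal, exactly as in Claim~\ref{cl:bigandnef}: bigness is preserved when one adds an effective divisor, and for nefness one checks $L_0''(j)\cdot C\ge 0$ curve by curve, the only nontrivial case being $C=\e_i$ for one of the added $\e_i$, where $L_0''(j)\cdot\e_i=L_0''(3)\cdot\e_i+\e_i^2=L_0''(3)\cdot\e_i-1\ge 0$ (using $\e_i\cdot\e_k=0$ for $i\ne k$), while for every other irreducible $C$ one has $\e_i\cdot C\ge 0$ and $L_0''(3)\cdot C\ge 0$. Thus everything comes down to showing that $L_0''(3)$ is big and nef.

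For nefness of $L_0''(3)$ I would read off from \eqref{eq:sumcc} its expression as a nonnegative combination of nef classes: all summands have self-intersection zero and are base-point-free pencils, namely $\ell-\e_5,\dots,\ell-\e_8$ and $Q:=2\ell-\e_5-\e_6-\e_7-\e_8$, except for the four $(-1)$-curves $N_0:=\ell-\e_7-\e_8$, $N_1:=\ell-\e_6-\e_7$, $N_2:=\ell-\e_5-\e_7$, $N_3:=\ell-\e_6-\e_8$, occurring with coefficients $c_0,1,1,1$. Since $y_5,\dots,y_8$ are general, $\widetilde P$ is a degree-$5$ del Pezzo surface and the $N_i$ are genuine irreducible $(-1)$-curves; hence for any irreducible $C\notin\{N_0,N_1,N_2,N_3\}$ one gets $L_0''(3)\cdot C\ge 0$ automatically, and nefness reduces to the four numbers $L_0''(3)\cdot N_0=c_5+c_6-c_0-1$, $L_0''(3)\cdot N_1=c_5+c_8-2$, $L_0''(3)\cdot N_2=c_6+c_8-2$ and $L_0''(3)\cdot N_3=c_5+c_7-1$. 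The last three are $\ge 0$ at once, since $c_5,c_6,c_7,c_8\ge 2$ in the present subcase.

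The main obstacle is the first intersection number, $L_0''(3)\cdot N_0=c_5+c_6-c_0-1$, which is delicate precisely because $N_0$ enters \eqref{eq:sumcc} with the \emph{large} coefficient $c_0\ge 3$ rather than with coefficient $1$, in contrast to Claim~\ref{cl:bigandnef}. Here I would invoke the parity data in \eqref{eq:cc1}: one has $c_5+c_6>c_0$, while $c_5+c_6$ is even and $c_0$ is odd, so $c_5+c_6-c_0$ is a \emph{positive odd} integer, hence $\ge 1$, giving $L_0''(3)\cdot N_0\ge 0$ and completing the verification of nefness. Finally, for bigness I would isolate the subdivisor $c_5(\ell-\e_5)+c_{10}Q$, which is nef of self-intersection $2c_5c_{10}>0$ and therefore big, and observe that $L_0''(3)-\bigl(c_5(\ell-\e_5)+c_{10}Q\bigr)$ is a nonnegative combination of $N_0,N_1,N_2,N_3$ and $\ell-\e_6,\ell-\e_7,\ell-\e_8$, hence effective; adding an effective divisor to a big divisor yields a big divisor, so $L_0''(3)$ is big. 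This proves the claim.
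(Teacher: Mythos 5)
Your proof is correct and takes essentially the same route as the paper's: reduce to $j=3$ via $L_0''(3)\cdot \e_i>0$ for $i\in\{5,6,7\}$, then check nefness only against the four $(-1)$-curves occurring in \eqref{eq:sumcc}, obtaining exactly the same four intersection numbers. The only differences are cosmetic: you spell out the bigness verification (which the paper dismisses as easily verified), and you invoke parity to get $c_5+c_6-c_0\geq 1$, where the strict inequality between integers in \eqref{eq:cc1} already suffices.
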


\begin{proof}
  Since $\e_i \cdot L_0''(3)>0$, for $i\in \{5,6,7\}$, it suffices to verify that
$L_0''(3)$ is big and nef. All divisors in the sum \eqref{eq:sumcc} are of nonnegative square, except for the first and the last three. We have, using \eqref{eq:cc1} and the fact that $c_5,c_6,c_7,c_8,c_{10} \geq 2$:
\begin{eqnarray*}
L_0''(3) \cdot (\ell-\e_7-\e_8) & = & -c_0+c_5+(c_6-1) \geq 0, \\
L_0''(3) \cdot (\ell-\e_6-\e_7) & = & c_5+(c_8-1)-1 \geq 2+1-1=2, \\
L_0''(3) \cdot (\ell-\e_7-\e_5) & = & (c_6-1)+(c_8-1)-1+1 \geq 1+1+0 \geq 2, \\
L_0''(3) \cdot (\ell-\e_8-\e_6) & = & c_5+(c_7-1)+1-1 \geq 2+1+0=3, 
  \end{eqnarray*}
which proves that $L_0''(3)$ is nef. It is easily verified that it is big. \end{proof}

 Now we apply Proposition \ref{prop:metodo1} with
 $k=\max\{0,\kappa-1\} \leq 3$, $s=\kappa$, $t=4$ and $L''_0=L_0(k)$. What is left to be checked is condition (v). This is satisfied because $\e_8 \cdot L_0''(k)=c_0+c_{8}-1+c_{10}+1=c_0+c_8+c_{10}>0$.

\subsection{Case where $a_0>0$ and all remaining $a_i$ are odd (case (iii) in Lemma \ref{lemma:palloso})} \label{ss:case-iii}

Rearranging indices,
we have a limit line bundle
\[ L^0 \equiv c_0E_{9,10}^0+c_9E^0_9+c_{10}E^0_{10}+\sum_{i=1}^7c_iE^0_i, \]
with the $E^0_i$ and $E_{9,10}^0$ as in Example \ref{ex:isoII}, where
\begin{eqnarray}
  \label{eq:ccc1}
  & c_9+c_{10} \geq c_0 \geq c_9 \geq c_{10} >0,  \; \; c_9,c_{10} \; \;\mbox{odd}, & \\
  \label{eq:ccc2} & 0<c_1 \leq c_2 \leq \cdots \leq c_6 \leq c_7, \; \; \mbox{all odd}.&
\end{eqnarray}

Consider
\begin{eqnarray*}
  L' & := & L^0|_{\widetilde{R}}  \equiv  c_0\f +c_9\s+c_{10}\f
            +\sum_{i=1}^4 c_i(\s-\e_i)+\sum_{i=5}^7c_i\f \\
     & = & (c_1+c_2+c_3+c_4+c_9)\s+(c_0+c_5+c_6+c_7+c_{10})\f-\sum_{i=1}^4 c_i\e_i  \\
& = & L'_0 + \sum_{i=2}^{4}(\f-\e_i),
\end{eqnarray*}
where
\[
  L'_0 := (c_1+c_2+c_3+c_4+c_9)\s +\left(c_0+c_5+c_6+c_7+c_{10}-3\right)\f-c_1\e_1-\sum_{i=2}^4 (c_i-1)\e_i
\]
and $\sum_{i=2}^{4}(\f-\e_i)$ is the sum of three disjoint $(-1)$-curves.
We will eventually apply Proposition \ref{prop:metodo2}.

\begin{claim}
 $L'_0$ verifies condition $(\star)$ and is odd.
\end{claim}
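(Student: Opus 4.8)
The plan is to read off the coordinates of $L'_0$ in the numerical basis $\s,\f,\e_1,\e_2,\e_3,\e_4$ of $\widetilde R=\Bl_{y_1,y_2,y_3,y_4}(R)$ and then verify the four inequalities of condition $(\star)$ (Definition \ref{def:good}) together with oddness (Definition \ref{def:odd}) by direct inspection, using only that the $c_i$ are positive integers with $c_1,\dots,c_7,c_9,c_{10}$ odd, as recorded in \eqref{eq:ccc1}--\eqref{eq:ccc2}. Writing $L'_0\equiv\alpha\s+\beta\f-\sum_{i=1}^4\gamma_i\e_i$, I would set $\alpha=c_1+c_2+c_3+c_4+c_9$, $\beta=c_0+c_5+c_6+c_7+c_{10}-3$, $\gamma_1=c_1$ and $\gamma_i=c_i-1$ for $i\in\{2,3,4\}$, and note at the outset that every $c_i$ appearing is at least $1$, so in particular $\alpha\geq 5$ and $\beta\geq 2$.

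For oddness I would use the intersection numbers $\s\cdot\f=1$, $\f^2=0$ and $\e_i\cdot\f=0$ (the last because a general fibre of $\pi\colon\widetilde R\to E$ avoids the $y_i$) to compute $L'_0\cdot\f=\alpha=c_1+c_2+c_3+c_4+c_9$; since this is a sum of five odd integers by \eqref{eq:ccc1}--\eqref{eq:ccc2}, it is odd, so $L'_0$ is odd. For condition $(\star)$, parts (i)--(iii) are immediate from positivity: (i) holds as $\alpha\geq 5\geq 1$ and $\beta\geq 2\geq 0$; (ii) holds as $\gamma_1=c_1\leq\alpha$ and $\gamma_i=c_i-1<c_i\leq\alpha$ for $i\in\{2,3,4\}$; and a one-line cancellation gives $\alpha+\beta-\sum_{i=1}^4\gamma_i=c_0+c_5+c_6+c_7+c_9+c_{10}>0$, which is (iii). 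The only part carrying content is (iv), for which I would invoke the identity $-L'_0\cdot K_{\widetilde R}=\alpha+2\beta-\sum_{i=1}^4\gamma_i$ noted in Definition \ref{def:good} and evaluate it to $c_9+2c_0+2c_5+2c_6+2c_7+2c_{10}-3$; since each $c_i\geq 1$ this is at least $11-3=8\geq 4$, establishing (iv).

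The step requiring the most care is (iv), but I expect no real obstacle there: in contrast to the analogous claims in \S\ref{ss:case-i} and \S\ref{ss:case-ii}, all of $c_0,c_5,c_6,c_7,c_9,c_{10}$ are automatically strictly positive in the present case, so the bound $-L'_0\cdot K_{\widetilde R}\geq 8$ is comfortable and no case distinction on the number of nonzero coefficients is needed. The whole argument is therefore a short verification of linear inequalities, whose only inputs are the standard intersection numbers on $\widetilde R$ and the parity bookkeeping for oddness.
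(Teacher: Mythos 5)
Your proposal is correct and is essentially the paper's own argument, merely with the details written out: the paper also gets oddness from $L'_0\cdot\f=c_1+c_2+c_3+c_4+c_9$ being a sum of five odd integers (by \eqref{eq:ccc1}--\eqref{eq:ccc2}) and dismisses (i)--(iv) of $(\star)$ as easy consequences of the same properties. Your explicit verifications, including the computation $-L'_0\cdot K_{\widetilde R}=c_9+2(c_0+c_5+c_6+c_7+c_{10})-3\geq 8\geq 4$ for condition (iv), correctly fill in exactly those omitted steps, and your closing observation is also right: unlike the analogous claims in \S\ref{ss:case-i} and \S\ref{ss:case-ii}, no case distinction is needed here since all the relevant $c_i$ are strictly positive.
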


\begin{proof}
  Oddness follows since $c_9+c_1+c_2+c_3+c_4$ is odd by our assumptions \eqref{eq:ccc1}-\eqref{eq:ccc2}. Conditions (i)--(iv) of $(\star)$ easily follow from properties \eqref{eq:ccc1}--\eqref{eq:ccc2}.
\end{proof}

We have
\begin{eqnarray*}
  L'' :=  L^0|_{\widetilde{P}} &  \sim & c_0(\ell-\e_9)+c_9\e_9+c_{10}(2\ell-\e_5-\e_6-\e_7-\e_8)+ \sum_{i=5}^7 c_i(\ell-\e_i) \\
                               & = &
L_0''+\e_6+\e_7+\e_8,
\end{eqnarray*}
with
\begin{eqnarray*}
  L_0'' & := & (c_0-c_9)(\ell-\e_9)+c_9 \ell+ c_{10}(2\ell-\e_5-\e_6-\e_7-\e_8)+ \sum_{i=5}^7 (c_i-1)(\ell-\e_i)+\\
  & & +(\ell-\e_5-\e_6)+(\ell-\e_6-\e_7)+(\ell-\e_7-\e_8). 
\end{eqnarray*}

\begin{claim}
 $L''_0$ is big and nef.
\end{claim}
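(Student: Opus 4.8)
The plan is to exploit exactly the same mechanism as in the proofs of Claims~\ref{cl:bigandnef} and~\ref{cl:bigandnef2}. First I would record that $L''_0$ is, by construction, a \emph{nonnegative} integral combination of effective classes: the coefficient $c_0-c_9$ is $\geq 0$ by \eqref{eq:ccc1}, and each $c_i-1$ (for $i=5,6,7$) is $\geq 0$ since $c_i\geq c_1>0$ is odd, hence $\geq 1$, by \eqref{eq:ccc2}. Among the irreducible effective classes occurring in this sum, all have nonnegative self-intersection --- namely $\ell$ (square $1$), the classes $\ell-\e_9$ and $\ell-\e_i$ (square $0$), and $2\ell-\e_5-\e_6-\e_7-\e_8$ (square $0$) --- with the sole exceptions of the three $(-1)$-curves $\ell-\e_5-\e_6$, $\ell-\e_6-\e_7$, $\ell-\e_7-\e_8$.

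For nefness I would invoke the standard principle already used above: once $D$ is written as an effective sum of distinct irreducible curves, the inequality $D\cdot C\geq 0$ is automatic for every irreducible $C$ that is either not a component of $D$ or a component of nonnegative self-intersection, so it suffices to check $D\cdot C\geq 0$ on the finitely many components of negative self-intersection, here the three $(-1)$-curves above. I would therefore compute these three intersection numbers directly; a routine expansion, keeping track of the $\e_i\cdot\e_j$ contributions coming from the overlapping curves $\ell-\e_i-\e_{i+1}$, gives
\[
  L''_0\cdot(\ell-\e_5-\e_6)=c_0+c_7-1,\qquad
  L''_0\cdot(\ell-\e_6-\e_7)=c_0+c_5-2,\qquad
  L''_0\cdot(\ell-\e_7-\e_8)=c_0+c_5+c_6-2.
\]
All three are nonnegative: by \eqref{eq:ccc1} one has $c_0\geq c_9\geq 1$, and by \eqref{eq:ccc2} one has $c_5,c_6,c_7\geq c_1\geq 1$, so the first and third expressions are $\geq 1$ and the middle one is $\geq 0$. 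This shows $L''_0$ is nef.

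For bigness I would observe that $L''_0-c_9\ell$ is still a nonnegative combination of the effective classes listed (we only delete the summand $c_9\ell$), while $\ell$ is big, being the pullback of the ample class $\O_{\PP^2}(1)$ under the birational morphism $\widetilde P\to P$. Since $c_9\geq 1$ by \eqref{eq:ccc1}, the decomposition $L''_0=c_9\ell+(\text{effective})$ exhibits $L''_0$ as the sum of a big divisor and an effective one, hence big. Equivalently, having proved nefness one may note $(L''_0)^2\geq c_9\,L''_0\cdot\ell>0$, using that $L''_0\cdot\ell=c_0+2c_{10}+c_5+c_6+c_7>0$, and invoke that a nef divisor of positive self-intersection on a surface is big.

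The only genuinely delicate point is the bookkeeping in the three intersection computations and checking that the inequalities survive in the extremal cases, where the $c_i$ take their minimal admissible values --- for instance $c_0=c_5=1$, which forces equality $L''_0\cdot(\ell-\e_6-\e_7)=0$ rather than strict positivity. Everything else is formal once $L''_0$ has been read off as a nonnegative combination of the standard generators of $\Pic(\widetilde P)$, so I do not anticipate any structural obstruction beyond this sign-tracking.
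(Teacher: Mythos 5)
Your proof is correct and takes essentially the same approach as the paper's: the same decomposition of $L''_0$ into effective classes with only the three $(-1)$-classes of negative square, the same three intersection numbers $c_0+c_7-1$, $c_0+c_5-2$, $c_0+c_5+c_6-2$, and the same positivity bounds coming from \eqref{eq:ccc1} and \eqref{eq:ccc2}. The only difference is that you spell out the bigness, which the paper dismisses as easily computed; both of your arguments for it (writing $L''_0=c_9\ell+(\text{effective})$ with $\ell$ big, or nef with $(L''_0)^2>0$) are valid.
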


\begin{proof}
All terms in the expression of $L_0''$ right above have nonnegative square except for the last three. One computes
\begin{eqnarray*}
L_0'' \cdot (\ell-\e_5-\e_6) & = & c_0+(c_7-1)-1+0+1 \geq c_0 >0, \\
L_0'' \cdot (\ell-\e_6-\e_7) & = & c_0+(c_5-1)-1 \geq 0, \\
L_0'' \cdot (\ell-\e_7-\e_8) & = & c_0+(c_5-1)+(c_6-1)+1-1 \geq c_0>0, \\
\end{eqnarray*}
which shows that $L_0''$ is nef. One easily computes that it is big.
\end{proof}

We apply Proposition \ref{prop:metodo2}. What is left to be checked is condition (iv): The additional disjoint $(-1)$-curves on $\widetilde{P}$ are $\e_5$ and $\e_9$, and we have
\[
  \e_5 \cdot L_0''  =  c_{10}+c_5 >c_{10} \geq c_0-c_9=
      \e_9 \cdot L_0'',
\]
by \eqref{eq:ccc1} and \eqref{eq:ccc2}.

This concludes the proof of Theorem \ref{thm:main}.

 %%%%%%%%%%%%%%%%%%%%%%%%%%%%% (BIBLIOGRAPHY)%%%%%%%%%%%%%%%%%%%%%%%%%%%%%%%
%
%
%%%%%%%%%%%%%%%%%%%%%%%%%%%%%%%%%%%%%%%%%%%%%%%%%%%%%%%%%%%%%%%%%%%%%%%

\end{document}